\newtheorem*{theorem*}{Theorem}
\newtheorem{theorem}{Theorem}[section]
\newtheorem{proposition}[theorem]{Proposition}
\newtheorem{corollary}[theorem]{Corollary}
\newtheorem{lemma}[theorem]{Lemma}
\newtheorem{definition}[theorem]{Definition}
\theoremstyle{remark} 
\newtheorem{remark}[theorem]{Remark}
\numberwithin{equation}{section}
\newcommand{\al}{\alpha}
\newcommand{\ep}{\varepsilon}
\newcommand{\te}{\theta}
\newcommand{\cA}{{\mathcal A}}
\newcommand{\cD}{{\mathcal D}}
\newcommand{\cE}{{\mathcal E}}
\newcommand{\cF}{{\mathcal F}}
\newcommand{\cG}{{\mathcal G}}
\newcommand{\cO}{{\mathcal O}}
\newcommand{\cR}{{\mathcal R}}
\newcommand{\cS}{{\mathcal S}}
\newcommand{\RR}{{\mathbb R}}
\newcommand{\CC}{{\mathbb C}}
\newcommand{\TT}{{\mathbb T}}
\newcommand{\ZZ}{{\mathbb Z}}
\newcommand{\NN}{{\mathbb N}}
\newcommand{\QQ}{{\mathbb Q}}
\newcommand{\abs}[1]{|{#1}|}
\newcommand{\Abs}[1]{\left|{#1}\right|}
\newcommand{\norm}[1]{\|{#1}\|}
\newcommand{\Norm}[1]{\left\|{#1}\right\|}
\newcommand{\aver}[1]{\langle{#1}\rangle}
\newcommand{\dist}[1]{\mathrm{dist}\left({#1}\right)}
\def\epsilon{\varepsilon}
\def\ii{\mathrm{i}}
\def\ee{\mathrm{e}}
\def\im{\mathrm{Im}}
\def\id{\mathrm{id}}
\def\dif{ {\mbox{\rm d}} }
\def\Dif{ {\mbox{\rm D}} }
\def\pd{ \partial }
\def\Sin{S}
\def\Cos{C}
\def\Anal{{\cA}}
\def\CLipO{C_0^\sLip}
\def\CLipI{C_1^\sLip}
\def\CLipII{C_2^\sLip}
\def\CLipIII{C_3^\sLip}
\def\CLiph{C_4^\sLip}
\def\CLipIV{C_5^\sLip}
\def\CLipEE{C_6^\sLip}
\def\CLipV{C_7^\sLip}
\def\CLipVI{C_8^\sLip}
\def\CLipVII{C_9^\sLip}
\def\GI{\mathfrak{G}_1}
\def\GII{\mathfrak{G}_2}
\def\HI{\mathfrak{H}_1}
\def\HII{\mathfrak{H}_2}
\def\HIII{\mathfrak{H}_3}
\def\TI{\mathfrak{T}_1}
\def\TII{\mathfrak{T}_2}
\def\TIV{\mathfrak{T}_4}
\def\PI{\mathfrak{P}_1}
\def\PII{\mathfrak{P}_2}
\def\PIII{\mathfrak{P}_3}
\def\PIV{\mathfrak{P}_4}
\def\PV{\mathfrak{P}_5}
\def\PVI{\mathfrak{P}_6}
\def\al{{\alpha}}
\def\ep{{\varepsilon}}
\def\map{{f}}
\def\conj{{h}}
\def\rot{{\theta}}
\def\Hom{{\mathrm{Hom}}}
\newcommand{\sigh}{{\sigma_1}}
\newcommand{\sighi}{{\sigma_2}}
\newcommand{\sighxx}{{\sigma_3}}
\newcommand{\sighb}{{\sigma_b}}
\newcommand{\betah}{{\beta_0}}
\newcommand{\betaDh}{{\beta_1}}
\newcommand{\betaa}{{\beta_2}}
\newcommand{\Difeo}[1]{{\mathcal{A}(\TT_{#1})}}
\newcommand{\Per}[1]{{\mathrm{Per}(\TT_{#1})}}
\newcommand{\Lip}[1]{{\mathrm{Lip}_{#1}}}
\newcommand{\sLip}{\mathrm{Lip}}
\newcommand{\lip}[1]{{\mathrm{lip}_{#1}}}
\newcommand{\meas}{\mathrm{Leb}}
\begin{document}

\title{
Effective bounds for the measure of rotations
}

\date{\today}

\author{Jordi-Llu\'{\i}s Figueras$^\clubsuit$}
\address[$\clubsuit$]{Department of Mathematics, Uppsala University,
Box 480, 751 06 Uppsala, Sweden}
\email{figueras@math.uu.se}

\author{Alex Haro$^\diamondsuit$}
\address[$\diamondsuit$]{Departament de Matem\`atiques i Inform\`atica, Universitat de Barcelona,
Gran Via 585, 08007 Barcelona, Spain.}
\email{alex@maia.ub.es}

\author{Alejandro Luque$^\spadesuit$}
\address[$\spadesuit$]{Department of Mathematics, Uppsala University, 
Box 480, 751 06 Uppsala, Sweden}
\email{alejandro.luque@math.uu.se}

\begin{abstract}
A fundamental question in Dynamical Systems is to identify regions of
phase/parameter space satisfying a given property (stability, linearization,
etc).  Given a family of analytic circle diffeomorphisms depending on a
parameter, we obtain effective (almost optimal) lower bounds
of the Lebesgue measure of the set of parameters
that are conjugated to a rigid rotation.
We estimate this measure using an a-posteriori KAM
scheme that relies on quantitative conditions that
are checkable using computer-assistance. We carefully describe
how the 
hypotheses in our theorems are reduced to a finite number of
computations, and apply our methodology to the case of the
Arnold family. Hence we show that obtaining non-asymptotic lower bounds for
the applicability of KAM theorems is a feasible task provided one has an
a-posteriori theorem to characterize the problem.  Finally, 
as a direct corollary, we produce explicit asymptotic
estimates in the so called local reduction setting (\`a la Arnold) which are
valid for a global set of rotations.  
\end{abstract}

\maketitle

\noindent \emph{Mathematics Subject Classification}:
37E10; 
37E45; 
37J40; 
65G40; 

\noindent \emph{Keywords}: 
circle maps, KAM theory, measure of stability, quantitative estimates.

\tableofcontents

\section{Introduction}\label{sec:intro}

One of the most important problems in Hamiltonian Mechanics, 
and Dynami\-cal Systems in general, is to
identify stability (and instability) regions in
phase and parameter space. The question of stability goes 
back to the classical works
of outstanding mathematicians 
during the XVIII$^\mathrm{th}$ to early XX$^\mathrm{th}$
centuries, such as 
Laplace, 
Kovalevskaya, 
Poincar\'e, 
Lyapunov, 
or
Birkhoff, 
who 
thought about
the problem and obtained important
results in this direction~\cite{Birkhoff66,Dumas14,Poincare87b,Stoker55}.
During the mid 1950's the birth of KAM theory \cite{Arnold63a,Kolmogorov54,Moser62}
gave certain hope in the characterization of stable motions, not only by proving the existence
of quasi-periodic solutions but also revealing that they are present in
regions of positive measure in phase space
\cite{Lazutkin74,Neishtadt81,Poschel82}.
Notwithstanding the formidable impact of ideas and results produced
in the perturbative context \cite{ArnoldKN06,BroerHS96,Zehnder75}, 
even in recent works \cite{
ChierchiaP11,
EliassonFK15,
EncisoP15,
Fejoz04},
the theory was for long time attributed to be
seriously limited in the study of concrete and realistic problems.
On the one hand, the size of the
perturbation admitted in 
early KAM theorems was dramatically 
small\footnote{Giving a satisfactory account about
skeptic criticisms in this direction 
is far from the scope of this paper, but
we refer to \cite{CellettiC07,Dumas14} for illuminating details and references.},
and on the other hand, as far as the authors know, the only knowledge about
measure estimates in phase space is just asymptotic:
the union of the surviving invariant tori has relative measure of order at least 
$1-\sqrt{\ep}$, where $\ep$ is the perturbation parameter
(see the above references and also \cite{BiascoC15,MedvedevNT15}, 
where secondary tori are also considered).

The aim of this paper is to show that
the task of obtaining effective bounds for the measure
of quasi-periodic solutions in phase space is feasible, 
and to provide full details in the setting of conjugacy to
rotation of (analytic) circle diffeomorphisms.
The study of maps of the circle to itself is one of the
most fundamental dynamical systems, and many years after Poincar\'e
raised the question of comparing the dynamics of a circle 
homeomorphism with a rigid rotation,
it is perhaps the
problem where the global effect of small divisors is best
understood.
This problem was approached by Arnold himself in~\cite{Arnold61}
who obtained mild conditions for conjugacy to rotation
in a perturbative setting (known as \emph{local reduction theorem}
in this context), showing also that the existence
and smoothness of such conjugacy is closely connected with
the existence of an absolutely continuous invariant measure.
The first global results (known as \emph{global reduction theorems})
were obtained in~\cite{Herman79} and
extended later in~\cite{Yoccoz84}.
Sharp estimates on finite regularity were
investigated along different works~\cite{KatznelsonO89,
KhaninS87,
SinaiK89} and finally obtained in~\cite{KhaninT09}.

Before going into the details, we think it is convenient 
to give an
overview on the progress to effectively apply KAM theory in particular systems.
With the advent of computers and new developed methodologies, the applicability
of the theory has been manifested in the pioneering works
\cite{CellettiC88,LlaveR91} and in applications to Celestial Mecha\-nics
\cite{CellettiC97,CellettiC07,Locatelli98,LocatelliG00}.  A recent methodology
has been proposed in \cite{FiguerasHL17}, based in an a-posteriori KAM theorem
with quantitative and sharp explicit hypotheses. To check the hypotheses of
the theorem, a major difficulty is to control the analytic norm of some
complicated functions defined on the torus. This is done using fast Fourier
transform (with interval arithmetics) and carrying out an accurate control of
the discretization error.
The methodology has been applied to low dimensional problems obtaining almost
optimal results.  But after the previous mentioned works, the most important
question remained open, that is:
\begin{quote}
\emph{Given a particular system with non-perturbative
parameters, and given a particular region of interest in
phase/parameter space,
what is the abundance of quasiperiodic smooth solutions in that region?}
\end{quote}

From the perspective of characterizing the Lebesgue measure of such
solutions, the above question
is a global version of the perturbative (asymptotic) estimates
for the measure of KAM tori~\cite{Arnold63a,Lazutkin74,Neishtadt81,Poschel82}.
In contrast, an analogous global question regarding the topological
characterization of instability was formulated by Herman~\cite{Herman98} in terms
of non-wandering sets.
Although non-perturbative and global questions are of the highest
interest in the study of a dynamical systems, it is not
surprising that they are not often explicitly formulated in the literature
(with exception of~\cite{Birkhoff66,Stoker55} and some numerical studies,
e.g. \cite{Laskar89,RobutelL01,SimoT08}),
since the analytical approaches to the problem were limited by using
asymptotic estimates in the perturbation parameter.
Then, the work presented in this paper not only is valuable for the fact
that it provides a novel tool to use in KAM-like schemes, but also
enlarges our vision about how stability can be effectively measured.

In this paper the above question is directly formulated in the context of circle
maps. Given any family $\al \in A \rightarrow f_\al$
of analytic circle diffeomorphisms, we answer the following problem:

\begin{quote}
\emph{Obtain (almost optimal) lower bounds for the measure of parameters $\al \in A$
such that the map $f_{\al}$ is analytically conjugated to a rigid rotation.}
\end{quote}

This question was considered by Arnold in~\cite{Arnold61} (following
Poincar\'e's problem on the study of the rotation number as a function
on the space of mappings)
for the paradigmatic
example 
\begin{equation}\label{eq:AMAP}
\al \in [0,1] \longmapsto f_{\al,\ep}(x) = x + \al + \frac{\ep}{2\pi} \sin (2\pi)\,,
\end{equation}
where $|\ep|<1$ is a fixed parameter.
Denoting the rotation number of this family as $\rho_\ep : \al \in [0,1]
\mapsto \rho(f_{\al,\ep})$ and introducing the set $K_\ep = [0,1]\backslash
\mathrm{Int} (\rho_\ep^{-1}(\QQ))$, he was able to prove that
$\meas(K_\ep)\rightarrow 1$ for $|\ep|\rightarrow 0$, where $\meas(\cdot)$
stands for the Lebesgue measure.  As global results for $0 < |\ep|<1$, Herman
proved in~\cite{Herman79} 
that $K_\ep$ is a Cantor set, $K_\ep \cap \rho_\ep^{-1}(\QQ)$ is a countable set
dense in $K_\ep$, and $\rho_\ep^{-1}(p/q)$ is an interval with non-empty
interior for every $p/q \in \QQ$.  Still, Herman himself proved
in~\cite{Herman79b} that $\meas(K_\ep)>0$, but no quantitative estimates for
this measure are known.  A major difficulty is that, in the light of the
previous properties, $\rho_\ep$ is not a $C^1$ function ($\rho_\ep'$ blows up
in a dense set of points of $K_\ep$).  To deal with the task, we resort to an
a-posteriori KAM formulation of the problem that combines local and global
information and that we informally state as follows:

\begin{theorem*}
Let $A, B\subset \RR$ be open intervals and $\al \in A 
\mapsto
f_\al$ be a $C^3$-family
of analytic circle diffeomorphisms.
Let  $\theta \in B 
\mapsto
h_\theta$ be a Lipschitz family of
analytic circle diffeomorphisms and let 
$\theta \in B \mapsto \al(\theta) \in A$ be a Lipschitz function.
Under some mild and explicit conditions, if the family of
error functions $\theta\in B
\mapsto
e_\theta$ given by
\[
e_\theta(x)= f_{\al{(\theta)}}(h_\theta(x))-h_{\theta}(x+\theta)
\]
is small enough, then there exist a Cantor set 
$\Theta\subset B$ of positive measure, 
a Lipschitz family of
analytic circle diffeomorphisms $\theta \in \Theta 
\mapsto
\bar h_\theta$ and 
a Lipschitz function  
$\theta \in \Theta \mapsto \bar\al(\theta) \in A$
such that 
\[
    f_{\bar\al{(\theta)}}(\bar h_\theta(x))=\bar h_{\theta}(x+\theta).
\]
Moreover,  the measure of conjugacies in the space
of parameters, $\meas(\bar\al(\Theta))$, is controlled in terms of explicit estimates
that depend only on the initial objects 
and Diophantine properties defining $\Theta$.
\end{theorem*}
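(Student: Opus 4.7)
The plan is to prove the theorem via an a-posteriori, parameter-dependent KAM scheme of quasi-Newton type, carrying Diophantine parameters throughout the iteration. Fix Diophantine constants $\ga>0$ and $\tau\geq 1$ and set
\[
\Theta = \{\theta \in B : |q\theta - p| \geq \ga/|q|^\tau \ \text{for all } p\in\ZZ,\,q\in\ZZ_{>0}\},
\]
a Cantor set whose complement in $B$ has Lebesgue measure at most $C_\tau\,\ga$, for an explicit constant depending only on $\tau$ and $|B|$. For each $\theta\in\Theta$ one then runs a Newton iteration starting from the approximate conjugacy $(h_\theta,\al(\theta))$ and aims at a limit $(\bar h_\theta,\bar\al(\theta))$ satisfying the exact conjugacy equation.

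The Newton step is obtained by linearising $f_{\al(\theta)}(h_\theta(x))=h_\theta(x+\theta)$ around the current approximation. Writing the unknowns as $(h_\theta+\Delta h,\,\al(\theta)+\Delta\al)$ yields
\[
\pd_\al f_{\al(\theta)}\!\bigl(h_\theta(x)\bigr)\,\Delta\al + f'_{\al(\theta)}\!\bigl(h_\theta(x)\bigr)\,\Delta h(x) - \Delta h(x+\theta) = -e_\theta(x) + O(\|\Delta\|^2).
\]
Following the Herman--Moser framework, I would reduce this via a geometric change of unknowns (conjugation by $h_\theta'$, i.e.\ the automatic reducibility / translated torus trick) to two cohomological equations of the form $\xi(x+\theta)-\xi(x)=\eta(x)-\langle\eta\rangle$: one determines $\Delta h$ and the other fixes $\Delta\al$ so as to kill the average obstruction to solvability. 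For $\theta\in\Theta$, these are solvable with the classical loss-of-domain estimate $\|\xi\|_{\rho-\delta}\leq C\,\ga^{-1}\delta^{-\tau}\|\eta\|_\rho$, which provides the explicit bookkeeping constants that feed the a-posteriori hypotheses.

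Iterating with geometrically shrinking strip widths $\rho_n$ then yields quadratic convergence of $(h_\theta^{(n)},\al^{(n)}(\theta))$ to limits $(\bar h_\theta,\bar\al(\theta))$ solving the conjugacy equation, provided $\|e_\theta\|$ is small relative to the nondegeneracy of $\pd_\al f_\al$, the size of $1/h'_\theta$, and the loss-of-domain budget. Lipschitz dependence in $\theta$ propagates through the iteration: each Newton step is algebraically built from $f_\al$ (of class $C^3$ in $\al$), from $h_\theta$ and $\al(\theta)$ (Lipschitz in $\theta\in\Theta$), and from the solution operator of the cohomological equation (Lipschitz on $\Theta$ with constant controlled by $\ga^{-1}$), so the successive Lipschitz constants form a summable series and the limit is Lipschitz on $\Theta$. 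The measure estimate then follows from
\[
\meas\bigl(\bar\al(\Theta)\bigr) \geq \frac{\meas(\Theta)}{\sLip(\bar\al^{-1})} \geq \frac{\meas(B)-C_\tau\,\ga}{\sLip(\bar\al^{-1})},
\]
where $\sLip(\bar\al^{-1})$ is controlled by exploiting that $\bar\al$ is a small Lipschitz perturbation of the initial $\al$, for which a quantitative bi-Lipschitz bound is part of the mild hypotheses. The main technical obstacle is to track \emph{all} of these constants sharply---the loss-of-domain budget, the Lipschitz constants in $\theta$, and the twist/nondegeneracy bounds---so that the final estimate is not merely positive but nearly $\meas(B)$ and explicit enough to be verified by computer assistance in concrete examples like the Arnold family~\eqref{eq:AMAP}.
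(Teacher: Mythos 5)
Your proposal follows essentially the same route as the paper: an a-posteriori quasi-Newton scheme with the automatic-reducibility change of unknowns $\Delta_h = h'\varphi$, R\"ussmann estimates on a Diophantine set $\Theta\subset B$, quadratic convergence with shrinking analyticity strips, Lipschitz constants propagated through the iteration, and the measure of $\bar\al(\Theta)$ obtained by transporting $\meas(\Theta)$ through a lower Lipschitz bound on $\bar\al$ (the paper phrases this via $\lip_{\Theta}(\al_\infty)$ rather than $\sLip(\bar\al^{-1})$, which is the same estimate). This is exactly the structure of Theorems~\ref{theo:KAM} and~\ref{theo:KAM:L} together with Lemmata~\ref{lem:iter} and~\ref{lem:iter:L}, so the outline is correct and not a genuinely different argument.
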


For the convenience of the reader, the above result
is presented in two parts.
In Section~\ref{ssec:conj} (Theorem~\ref{theo:KAM}) we present
an a-posteriori theorem for the existence of the conjugacy of
a fixed rotation number. Then,
in Section~\ref{ssec:conjL} (Theorem~\ref{theo:KAM:L})
we present an a-posteriori theorem to control the existence
and measure of conjugacies in a given interval of rotations.
Both results could be handled simultaneously, but
this splitting is useful when the time comes
to produce computer-assisted applications and also 
allows us to present the ideas in a self-consistent and more accessible way.

An important step to check the hypotheses of the theorem
is to put on the ground a solid 
theory based on Lindstedt series that allows us to perform all necessary 
computations effectively in a computer-assisted proof.
In Section~\ref{sec:hypo} we describe in detail, using analytic arguments, how
the hypotheses are thus reduced to a finite amount of computations
which can be implemented systematically. 
In particular, 
we explain how to control the norms
of Fourier-Taylor series
using suitable discretizations and taking into account the corresponding
remainders analytically. Indeed, the fact that Fourier-Taylor series can
be manipulated using fast Fourier methods is \emph{per se} a novel contribution in
this paper, so one can outperform the use of symbolic manipulators.

As an illustration of the effectiveness of the methodology we 
consider the
Arnold family~\eqref{eq:AMAP}.
For example, we prove that
\[
0.860748 < \meas(K_{0.25}) < 0.914161\,.
\]
The lower bound follows from the computer assisted application of
our main theorem and the upper bound is obtained by rigorous
computation of $p/q$-periodic orbits up to $q=20$.
Details, and further results, are given in Section~\ref{sec:example:Arnold}.

Regarding regularity,
we have constrained our result to the analytic case,
thus simplifying some intermediate estimates in the analytical part
exposed in this paper, but also, this is convenient 
for the control of the error of Fourier approximations in the computer-assisted application
of the method. This simplification only benefits the reader,
since the selected problem contains all the technical difficulties associated
to small divisors and illustrates very well the method proposed in this paper.

\section{Notation and elementary results}

We denote by $\TT = \RR/\ZZ$ the real circle.
We introduce a complex strip of $\TT$ of width $\rho>0$ as
\[
\TT_{\rho}= 
\left\{x \in \CC / \ZZ \ : \ \abs{\im{\,x}} < \rho \right\}\,,
\]
denote by $\bar \TT_{\rho}$ its closure, and by $\partial \TT_{\rho}=
\{|\im{\,x}|=\rho\}$ its boundary.

We denote by $\Per{\rho}$ the Banach space of periodic
continuous functions $f: \bar \TT_\rho \rightarrow \CC$, holomorphic in
$\TT_\rho$ and such that $f(\TT) \subset \RR$, endowed with the analytic norm
\[
\norm{f}_\rho \textcolor{blue}{:}= \sup_{x \in \TT_\rho} \abs{f(x)} \, .
\]
We denote the Fourier series of a periodic function $f$ as
\[
f (x) = \sum_{k \in \ZZ} \hat f_k \ee^{2\pi \ii x}
\]
and $\aver{f}:=\hat f_0$ stands
for the average.

In this paper we consider circle maps in the affine space
\[
\Difeo{\rho}=\left\{ \map \in \Hom(\TT)\,,\, f-{\rm id}\in
\Per{\rho}\right\} \,.
\]
Given an open set $A \subset \RR$ and a family of maps $\al \in A \rightarrow
\map_\al \in \Difeo{\rho}$, we consider the norms
\[
\norm{\partial_{x,\al}^{i,j} \map}_{A,\rho} \textcolor{blue}{:}= \sup_{\al \in
A} \norm{\partial_{x,\al}^{i,j}\map_\al}_\rho \, ,
\]
provided $\partial_{x,\al}^{i,j} \map_\al \in \Per{\rho}$ for every
$\al \in A$.

Finally, we introduce some useful notation regarding solutions
of cohomological equations. Given a zero-average function $\eta$,
we consider the linear difference equation
\begin{equation}\label{eq:coho0}
\varphi(x+\rot) -
\varphi(x)
= \eta(x)\,.
\end{equation}
To ensure regularity of the solutions of this equation, some
arithmetic conditions on the rotation number are required.
Given $\gamma>0$ and $\tau\geq 1$, the set of $(\gamma,\tau)$-Diophantine
numbers is given by
\[
\cD(\gamma,\tau):=\{
\theta \in \RR 
\, : \,
|q\theta-p|\geq \gamma |q|^{-\tau}
\, , \,
\forall (p,q)\in \ZZ^2
\, , \,
q\neq 0
\}.
\]
For the scope of this paper, the following 
classic lemma is enough.

\begin{lemma}[R\"usmann estimates \cite{Russmann76a}]\label{lem:Russ}
Let $\rot \in \cD(\gamma,\tau)$.
Then, for every zero-average function $\eta \in \Per{\rho}$,
there exists a unique zero-average solution $\cR \eta$ of equation
\eqref{eq:coho0} such that, for any
$0<\delta \leq \rho$, we have 
$\cR \eta \in \Per{\rho-\delta}$ and
\[
\norm{\cR \eta}_{\rho-\delta} \leq \frac{c_R \norm{\eta}_\rho}{\gamma
\delta^\tau}\,, \qquad \mbox{with} \qquad 
c_R=\frac{\sqrt{\zeta(2,2^\tau)\Gamma(2\tau+1)}}{2(2\pi)^\tau}\,,
\]
where $\Gamma$ and $\zeta$ are the Gamma and
Hurwitz zeta functions, respectively.
\end{lemma}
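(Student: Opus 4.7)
The plan is a classical Fourier-series argument. Expanding $\eta(x)=\sum_{k\ne 0}\hat\eta_k e^{2\pi ikx}$ (zero average by hypothesis) and seeking $\varphi$ with the same spectral support, equation~\eqref{eq:coho0} matches coefficient-by-coefficient to $(e^{2\pi ik\theta}-1)\hat\varphi_k=\hat\eta_k$ for every $k\ne 0$. Since $\theta$ is Diophantine, in particular irrational, every divisor is nonzero, and setting $\hat\varphi_0=0$ determines the unique zero-average formal solution
\[
\cR\eta(x)=\sum_{k\ne 0}\frac{\hat\eta_k}{e^{2\pi ik\theta}-1}\,e^{2\pi ikx}.
\]
Uniqueness is immediate: any zero-average continuous solution of the homogeneous equation $\psi(x+\theta)=\psi(x)$ is invariant along the orbit of the rotation by $\theta$; density of orbits forces $\psi$ constant, and the zero-average constraint makes it vanish.

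For the quantitative estimate, I use the Diophantine small-divisor bound $|e^{2\pi ik\theta}-1|=2|\sin\pi k\theta|\geq 4\,\mathrm{dist}(k\theta,\ZZ)\geq 4\gamma|k|^{-\tau}$. For $x\in\TT_{\rho-\delta}$ the triangle inequality gives
\[
\bigl|\cR\eta(x)\bigr|\leq\sum_{k\ne 0}\frac{|\hat\eta_k|}{|e^{2\pi ik\theta}-1|}\,e^{2\pi|k|(\rho-\delta)},
\]
and I apply Cauchy--Schwarz, splitting the general term as $|\hat\eta_k|e^{2\pi|k|\rho}$ times $e^{-2\pi|k|\delta}/|e^{2\pi ik\theta}-1|$. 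The first factor is controlled by boundary Parseval: the identities $\int_0^1|\eta(x\pm i\rho)|^2\,\mathrm{d}x=\sum_k|\hat\eta_k|^2 e^{\mp 4\pi k\rho}$ combined with $\int_0^1|\eta(x\pm i\rho)|^2\mathrm{d}x\leq\norm{\eta}_\rho^2$ give $\sum_k|\hat\eta_k|^2 e^{4\pi|k|\rho}\leq 2\norm{\eta}_\rho^2$. The remaining factor reduces the problem to the divisor sum
\[
S(\theta,\delta)=\sum_{k\ne 0}\frac{e^{-4\pi|k|\delta}}{|e^{2\pi ik\theta}-1|^2}.
\]

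The heart of the argument, and the step that simultaneously produces the closed form $c_R$ and its sharp scaling, is the bound on $S(\theta,\delta)$. Inserting the pointwise Diophantine inequality reduces it to $\sum_{k\ge 1}k^{2\tau}e^{-4\pi k\delta}$, which is compared with the Laplace integral $\int_0^\infty t^{2\tau}e^{-4\pi\delta t}\,\mathrm{d}t=\Gamma(2\tau+1)/(4\pi\delta)^{2\tau+1}$; the contribution of the discrete sum near its maximum at $k_\ast\sim \tau/(2\pi\delta)$ is absorbed into the Hurwitz-zeta correction $\zeta(2,2^\tau)=\sum_{n\ge 0}(n+2^\tau)^{-2}$, and a final accounting of powers of $2\pi$, $\delta$, and $\gamma$ produces the closed-form constant $c_R=\sqrt{\zeta(2,2^\tau)\Gamma(2\tau+1)}/(2(2\pi)^\tau)$. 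I expect this constant-chasing and the matching of scaling to be the only delicate point; the rest is routine Fourier bookkeeping.
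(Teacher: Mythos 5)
Your setup (Fourier expansion, uniqueness via density of the irrational rotation, Cauchy--Schwarz against the boundary Parseval bound $\sum_k|\hat\eta_k|^2e^{4\pi|k|\rho}\leq 2\norm{\eta}_\rho^2$) is fine, and it is in fact how Rüssmann-type proofs begin; note the paper itself does not prove this lemma but cites R\"ussmann for it. The gap is in the step you yourself call the heart of the argument: inserting the pointwise Diophantine bound $|e^{2\pi ik\theta}-1|\geq 4\gamma|k|^{-\tau}$ into
\[
S(\theta,\delta)=\sum_{k\neq 0}\frac{e^{-4\pi|k|\delta}}{|e^{2\pi ik\theta}-1|^2}
\]
gives $S\lesssim \gamma^{-2}\sum_{k\geq 1}k^{2\tau}e^{-4\pi k\delta}\approx \gamma^{-2}\,\Gamma(2\tau+1)/(4\pi\delta)^{2\tau+1}$, so after the square root your final bound scales like $\gamma^{-1}\delta^{-(\tau+1/2)}$, not $\gamma^{-1}\delta^{-\tau}$. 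The extra factor $\delta^{-1/2}$ cannot be ``absorbed into the Hurwitz-zeta correction'': $\zeta(2,2^\tau)$ is a constant independent of $\delta$, so no accounting of powers of $2\pi$, $\gamma$, $\delta$ can convert $\delta^{-(2\tau+1)}$ into $\delta^{-2\tau}$. As written, your argument proves a weaker estimate than the one stated in the lemma (which matters, since the constant $c_R$ and the exponent $\tau$ are used quantitatively throughout the paper).

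The sharp exponent is exactly what Rüssmann's argument is about: one must use that the small divisors cannot all be small simultaneously. Concretely, if $\mathrm{dist}(k_1\theta,\ZZ)<\epsilon$ and $\mathrm{dist}(k_2\theta,\ZZ)<\epsilon$ with $k_1\neq k_2$, then $\mathrm{dist}((k_1-k_2)\theta,\ZZ)<2\epsilon$, so the Diophantine condition forces $|k_1-k_2|>(\gamma/2\epsilon)^{1/\tau}$; hence in any block $|k|\leq K$ the number of modes with divisor below $\epsilon$ is controlled, and ordering the divisors one gets $\sum_{0<|k|\leq K}|e^{2\pi ik\theta}-1|^{-2}\lesssim \gamma^{-2}K^{2\tau}$ rather than the naive $\gamma^{-2}K^{2\tau+1}$. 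Feeding this counting into $S(\theta,\delta)$ (by summation by parts or dyadic blocks against the weight $e^{-4\pi|k|\delta}$) is what produces both the correct scaling $\gamma^{-2}\delta^{-2\tau}$ and the factor $\zeta(2,2^\tau)=\sum_{n\geq 0}(n+2^\tau)^{-2}$, which arises from summing inverse squares of the ordered divisors --- not from the Laplace-integral comparison near the maximum of $k^{2\tau}e^{-4\pi k\delta}$, as you suggest. So the skeleton of your proof is salvageable, but the divisor-sum step must be replaced by this repulsion/counting lemma; without it the stated constant and exponent are not obtained.
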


Assume that $f$ is a function defined in $\Theta \subset \RR$ (not necessarily
an interval) and taking values in $\CC$. We say that $f$ is Lipschitz in
$\Theta$ if
\[
\Lip{\Theta}(f)\textcolor{blue}{:}=\sup_{\substack{\theta_1,\theta_2\in \Theta
\\ \theta_1 \neq \theta_2}}
\frac{|f(\theta_2)-f(\theta_1)|}{|\theta_2-\theta_1|} < \infty\,.
\]
We define $\norm{f}_\Theta \textcolor{blue}{:}= \sup_{\theta \in \Theta}
|f(\theta)|$. Similarly, if we take a family $\theta \in \Theta \mapsto
f_\theta \in \Difeo{\rho}$, we extend the previous notations as
\[
\Lip{\Theta,\rho}(f)\textcolor{blue}{:}=\sup_{\substack{\theta_1,\theta_2\in
\Theta \\ \theta_1 \neq \theta_2}}
\frac{\norm{f_{\theta_2}-f_{\theta_1}}_\rho}{|\theta_2-\theta_1|}\,, \qquad
\norm{f}_{\Theta,\rho} \textcolor{blue}{:}= \sup_{\theta \in \Theta}
\norm{f_\theta}_\rho.
\]
Finally, we say that a function $f$ is Lipschitz from below in $\Theta$ if
\[
\lip{\Theta}(f)\textcolor{blue}{:}=\inf_{\substack{\theta_1,\theta_2\in \Theta
\\ \theta_1 \neq \theta_2}}
\frac{|f(\theta_2)-f(\theta_1)|}{|\theta_2-\theta_1|} > 0\,.
\]

To obtain several estimates required in the paper, we will
use the following elementary properties.

\begin{lemma}\label{lem:Lipschitz}
Assume that $f,g$ are Lipschitz functions defined in $\Theta \subset \RR$
and taking values in $\CC$. Then
\begin{itemize}
\item [$\PI$] $\Lip{\Theta}(f+g) \leq \Lip{\Theta}(f)+\Lip{\Theta}(g)$,
\item [$\PII$] $\Lip{\Theta}(fg) \leq
\Lip{\Theta}(f)\norm{g}_\Theta+\norm{f}_\Theta \Lip{\Theta}(g)$,
\item [$\PIII$] $\Lip{\Theta}(f/g) \leq
\Lip{\Theta}(f)\norm{1/g}_\Theta+\norm{f}_\Theta \norm{1/g}_\Theta^2
\Lip{\Theta}(g)$.
\end{itemize}
Assume $0<\rho\leq \hat\rho$, $\delta > 0$, $\rho-\delta > 0$, 
and that we have families $\theta \in \Theta \mapsto f_\theta \in
\Difeo{\hat\rho}$ and $\theta \in \Theta \mapsto g_\theta\in \Difeo{\rho}$ such
that $g_\theta(\bar \TT_\rho) \subseteq \bar \TT_{\hat \rho}$. Then
\begin{itemize}
\item [$\PIV$] $\Lip{\Theta,\rho}(f \circ g) \leq
\Lip{\Theta,\hat\rho}(f)+\norm{\partial_xf}_{\Theta,\hat\rho}
\Lip{\Theta,\rho}(g)$,
\item [$\PV$]
$\Lip{\Theta,\rho-\delta}(\partial_x g) \leq
\frac{1}{\delta}\Lip{\Theta,\rho}(g)$.
\end{itemize}
Assume that we have a family
$\theta \in \Theta \mapsto g_\theta\in \Difeo{\rho}$,
where $\Theta \subset \RR \cap \cD(\gamma,\tau)$. Then,
if we denote $\cR_\theta g_\theta=f_\theta$ the zero-average
solution of $f_\theta(x+\rot) -
f_\theta(x)= g_\theta(x)$ with $\theta \in \Theta$, we have
\begin{itemize}
\item [$\PVI$] $\Lip{\Theta,\rho-\delta}(\cR g) \leq
\frac{c_R}{\gamma\delta^{\tau}} \Lip{\Theta,\rho}(g)
+ \frac{\hat c_R}{\gamma^2 \delta^{2\tau+1}} \norm{g}_{\Theta,\rho}$,
where
\[
\hat c_R = \tau^{-2\tau} (2\tau+1)^{2\tau+1} c_R^2\,.
\]
\end{itemize}
\end{lemma}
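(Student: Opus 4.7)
The plan is to handle $\PI$ through $\PVI$ in order, observing that the first five properties are routine manipulations of Lipschitz seminorms while only $\PVI$ needs a substantive argument combining two R\"usmann estimates with a Cauchy estimate and an optimization.

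For $\PI$--$\PIII$ I would work pointwise in $\theta$. Property $\PI$ is the triangle inequality applied to $(f+g)(\theta_2)-(f+g)(\theta_1)$; $\PII$ follows from the Leibniz-type identity
$$f(\theta_2)g(\theta_2)-f(\theta_1)g(\theta_1)=(f(\theta_2)-f(\theta_1))\,g(\theta_2)+f(\theta_1)\,(g(\theta_2)-g(\theta_1));$$
and $\PIII$ reduces to $\PII$ applied to $f\cdot(1/g)$ after using $1/g(\theta_2)-1/g(\theta_1)=(g(\theta_1)-g(\theta_2))/(g(\theta_1)g(\theta_2))$. In each case one divides by $|\theta_2-\theta_1|$ and passes to the supremum.

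For $\PIV$, I would split
$$f_{\theta_2}(g_{\theta_2}(x))-f_{\theta_1}(g_{\theta_1}(x))=\bigl[f_{\theta_2}(y_2)-f_{\theta_1}(y_2)\bigr]+\bigl[f_{\theta_1}(y_2)-f_{\theta_1}(y_1)\bigr]$$
with $y_i:=g_{\theta_i}(x)\in\bar\TT_{\hat\rho}$. The first bracket is bounded by $\Lip{\Theta,\hat\rho}(f)|\theta_2-\theta_1|$ because $y_2\in\bar\TT_{\hat\rho}$, and the second by integrating $\partial_x f_{\theta_1}$ along the segment joining $y_1$ and $y_2$ (a convex subset of $\bar\TT_{\hat\rho}$) and invoking $\Lip{\Theta,\rho}(g)$. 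For $\PV$, the identity $\partial_x g_{\theta_2}-\partial_x g_{\theta_1}=\partial_x(g_{\theta_2}-g_{\theta_1})$ together with the Cauchy estimate $\norm{\partial_x h}_{\rho-\delta}\le \delta^{-1}\norm{h}_\rho$ applied to the analytic difference yields the bound after dividing by $|\theta_2-\theta_1|$.

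The main obstacle is $\PVI$, which I would attack by setting $f_\theta=\cR_\theta g_\theta$ and subtracting the two cohomological equations to get
$$(f_{\theta_2}-f_{\theta_1})(x+\theta_2)-(f_{\theta_2}-f_{\theta_1})(x)=\bigl(g_{\theta_2}(x)-g_{\theta_1}(x)\bigr)+\bigl(f_{\theta_1}(x+\theta_1)-f_{\theta_1}(x+\theta_2)\bigr).$$
The crucial trick is to decompose $f_{\theta_2}-f_{\theta_1}=\phi+\psi$ with
$$\phi:=\cR_{\theta_2}(g_{\theta_2}-g_{\theta_1}),\qquad \psi:=\cR_{\theta_2}\bigl[f_{\theta_1}(\cdot+\theta_1)-f_{\theta_1}(\cdot+\theta_2)\bigr].$$
R\"usmann applied to $\phi$ with full loss $\delta$ directly produces the first term $\frac{c_R}{\gamma\delta^\tau}\Lip{\Theta,\rho}(g)$ without spoiling it with any Cauchy loss; this is the key observation that makes the stated form of the bound achievable. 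For $\psi$ I would chain R\"usmann (loss $\delta_1$) producing $f_{\theta_1}$ from $g_{\theta_1}$, Cauchy (loss $\delta_2$) producing $\partial_x f_{\theta_1}$, the pointwise bound $\norm{f_{\theta_1}(\cdot+\theta_1)-f_{\theta_1}(\cdot+\theta_2)}\le|\theta_2-\theta_1|\,\norm{\partial_x f_{\theta_1}}$, and a final R\"usmann (loss $\delta_3$) on $\psi$ itself, giving
$$\norm{\psi}_{\rho-\delta}\le\frac{c_R^2\,\norm{g}_{\Theta,\rho}\,|\theta_2-\theta_1|}{\gamma^2\,\delta_1^\tau\,\delta_2\,\delta_3^\tau},\qquad \delta_1+\delta_2+\delta_3=\delta.$$
The last step is to maximize $\delta_1^\tau\delta_2\delta_3^\tau$ by Lagrange multipliers on the simplex $\delta_1+\delta_2+\delta_3=\delta$; the unique maximizer is $\delta_1=\delta_3=\frac{\delta\tau}{2\tau+1}$, $\delta_2=\frac{\delta}{2\tau+1}$, producing exactly the constant $\tau^{-2\tau}(2\tau+1)^{2\tau+1}c_R^2=\hat c_R$.
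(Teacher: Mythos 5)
Your proof is correct. The paper states Lemma~\ref{lem:Lipschitz} without giving a proof, but your argument is evidently the intended one: $\PI$--$\PV$ are exactly the routine pointwise and Cauchy-estimate manipulations you describe, and for $\PVI$ your decomposition $f_{\theta_2}-f_{\theta_1}=\cR_{\theta_2}(g_{\theta_2}-g_{\theta_1})+\cR_{\theta_2}\bigl[f_{\theta_1}(\cdot+\theta_1)-f_{\theta_1}(\cdot+\theta_2)\bigr]$ (both sides being zero-average solutions of the same cohomological equation for the Diophantine rotation $\theta_2$), followed by the R\"usmann--Cauchy--R\"usmann chain with losses $\delta_1,\delta_2,\delta_3$ and the optimization $\delta_1=\delta_3=\tau\delta/(2\tau+1)$, $\delta_2=\delta/(2\tau+1)$, reproduces exactly the stated constant $\hat c_R=\tau^{-2\tau}(2\tau+1)^{2\tau+1}c_R^2$.
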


\section{An a-posteriori theorem for a single conjugation}\label{ssec:conj}

Given a map $f \in \Difeo{\hat \rho}$ with
rotation number $\theta \in \cD(\gamma,\tau)$, it is
well-known that there exists an analytic circle diffeomorphism that conjugates
$f$ to a rigid rotation of angle $\theta$
(see~\cite{Arnold61,Herman79,Yoccoz84}
and~\cite{KatznelsonO89,KhaninS87,SinaiK89,KhaninT09} for finite regularity).
In this section, we present an a-posteriori result that allows constructing
such conjugacy for a given map.  For convenience, we express the result for a
parametric family of circle maps $\al \in A \mapsto \map_\al \in \Difeo{\hat
\rho}$ and, given a target rotation number $\theta$, we obtain a conjugacy
$h_\infty \in \Difeo{\rho_\infty}$ and a parameter $\al_\infty\in A$ (such that
$f_{\al_\infty}$ has rotation number $\theta$).

Given circle maps $f$ and $h$, we measure the error of conjugacy of $f$
to the rigid rotation $R(x)=x+\theta$ through $h$ by introducing the
(periodic) error function
\begin{equation}\label{eq:inva}
e(x):= \map(\conj(x))-\conj(x+\rot)\,. 
\end{equation}

The following a-posteriori result gives
(explicit and quantitative) sufficient conditions to guarantee the existence of
a true conjugacy close to an approximate one.

\begin{theorem}\label{theo:KAM}
Consider an open set $A \subset \RR$, a $C^2$-family $\al \in A
\mapsto \map_\al \in \Difeo{\hat \rho}$, with $\hat \rho > 0$; and a 
rotation number $\theta \in \RR$
fulfilling:
\begin{itemize}
\item [$\GI$]
For every $1 \leq i+j\leq 2$, there exist constants
$c^{i,j}_{x,\al}$ such that
$\norm{\partial_{x,\al}^{i,j} \map_\al}_{A, \hat \rho} \leq c_{x,\al}^{i,j}$.
For convenience we will write $c_x=c_{x,\al}^{1,0}$, $c_{x\al}=c_{x,\al}^{1,1}$, etc.
\item [$\GII$] We have $\theta \in \cD(\gamma,\tau)$.
\end{itemize}
Assume that we
have a pair $(\conj,\al) \in\Difeo{\rho} \times A$, with $\rho > 0$, 
fulfilling:
\begin{itemize}
\item [$\HI$] 
The map $h$ satisfies 
\[
\dist{\conj(\bar \TT_\rho),\partial \TT_{\hat \rho}}>0\,,
\]
and
\begin{equation}\label{eq:normah}
\aver{\conj-\id}=0\,.
\end{equation}
Moreover, there exist constants $\sigh$, and $\sighi$
such that
\[
\norm{\conj'}_\rho < \sigh\,, \qquad
\norm{1/\conj'}_\rho < \sighi\,.
\]
\item [$\HII$] Given 
\begin{equation}\label{eq:aux:fun:b}
b(x) := \frac{\partial_\al \map_\al (h(x))}{\conj'(x+\theta)}
\end{equation}
there exists a constant $\sighb$ such that 
\[
\Abs{1/\aver{b}} < \sighb\,.
\]
\end{itemize}
Then, for any $0<\delta<\rho/2$ and $0<\rho_\infty<\rho-2\delta$,
there exist explicit
constants $\mathfrak{C}_1$, $\mathfrak{C}_2$, and $\mathfrak{C}_3$
(depending on the previously defined constants) such that:
\begin{itemize}
\item [$\TI$]
Existence: If 
\begin{equation}\label{eq:KAM:C1}
\frac{\mathfrak{C}_1\norm{e}_{\rho}}{\gamma^2 \rho^{2\tau}}
< 1\,,
\end{equation}
with $e(x)$ given by \eqref{eq:inva},
then there exists a pair $(\conj_\infty,\al_\infty)
\in \Difeo{\rho_\infty} \times A$ such that
$\map_{\al_\infty}(\conj_\infty(x))=\conj_\infty(x+\rot)$,
that also satisfies $\HI$ and $\HII$.
\item [$\TII$]
Closeness: the pair $(\conj_\infty,\al_\infty)$ is close to the original one:
\begin{equation}\label{eq:KAM:C2}
\norm{\conj_\infty-\conj}_{\rho_\infty} <
\frac{\mathfrak{C}_2 \norm{e}_\rho}{\gamma \rho^\tau}\,,
\qquad
\abs{\al_{\infty}-\al} < \mathfrak{C}_3 \norm{e}_\rho \,.
\end{equation}
\end{itemize}
\end{theorem}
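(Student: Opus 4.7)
The plan is a quasi-Newton iterative scheme in the spirit of the parameterization method. Given an approximate pair $(\conj,\al)$ with error $e$, I look for corrections $(\Delta \conj,\Delta\al)$ so that the corrected pair $(\conj+\Delta \conj,\al+\Delta\al)$ has a much smaller error. Formally linearising the conjugacy equation around $(\conj,\al)$ yields
\[
\partial_x \map_\al(\conj(x))\,\Delta \conj(x) - \Delta \conj(x+\rot) + \partial_\al \map_\al(\conj(x))\,\Delta\al = -e(x).
\]
The crucial observation (automatic reducibility) comes from differentiating $\map_\al\circ \conj - \conj\circ R_\rot = e$, giving $\partial_x \map_\al(\conj)\,\conj' = \conj'(\cdot+\rot) + e'$. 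Using the ansatz $\Delta \conj = \conj' w$ and dividing by $\conj'(\cdot+\rot)$, the linearised equation becomes
\[
w(x+\rot) - w(x) = b(x)\,\Delta\al + \frac{e(x)}{\conj'(x+\rot)} + \frac{e'(x)\,w(x)}{\conj'(x+\rot)},
\]
with $b$ as in~\eqref{eq:aux:fun:b}. The last summand is quadratic in the error and is discarded at the Newton level. Taking averages fixes $\Delta\al = -\aver{e/\conj'(\cdot+\rot)}/\aver{b}$, well defined thanks to $\HII$. The remaining cohomological equation is solved via R\"ussmann's Lemma~\ref{lem:Russ}, losing a strip of width $\delta$; a final constant is subtracted from $w$ so that $\aver{\conj'w}=0$, thereby preserving the normalisation~\eqref{eq:normah}.

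Next, I estimate the new error $\tilde e(x) = \map_{\al+\Delta\al}(\conj(x)+\Delta \conj(x)) - (\conj+\Delta \conj)(x+\rot)$. Expanding $\map_{\al+\Delta\al}(\conj+\Delta \conj)$ to second order in both $\Delta \conj$ and $\Delta\al$ and using that $(\Delta \conj,\Delta\al)$ solves the linearised equation, $\tilde e$ reduces to: (i) Taylor remainder terms that are quadratic in $(\Delta \conj,\Delta\al)$ and controlled by the constants $c_{x,\al}^{i,j}$ from $\GI$, plus (ii) the dropped term $e'(x)\,w(x)/\conj'(x+\rot)$. For (ii), the factor $e'$ is bounded by $\norm{e}_\rho$ via Cauchy's estimate on an additional strip of width $\delta$, while $\norm{w}$ and $|\Delta\al|$ are linear in $\norm{e}_\rho$ by $\HI$--$\HII$ combined with Lemma~\ref{lem:Russ}. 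Combining these bounds yields the quadratic estimate
\[
\norm{\tilde e}_{\rho-2\delta} \leq \frac{C\,\norm{e}_\rho^{\,2}}{\gamma^2\,\delta^{2\tau+2}}
\]
for an explicit constant $C$ depending only on the constants of $\GI$--$\HII$.

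The iteration is then standard. I set $\rho_{n+1} = \rho_n - 2\delta_n$ with $\delta_n$ chosen from a summable sequence so that $\sum_n 2\delta_n = \rho - \rho_\infty$ (for instance $\delta_n$ proportional to $(n+1)^{-2}$); then condition~\eqref{eq:KAM:C1} with $\mathfrak{C}_1$ sufficiently large provides the base case for a super-exponential decay $\norm{e_n}_{\rho_n}\leq \ka^{2^n}$, $\ka<1$. A bootstrap argument shows that the quantities in $\HI$ and $\HII$ ($\sigh,\sighi,\sighb$, the range condition on $A$, and the distance of $\conj_n(\bar\TT_{\rho_n})$ from $\partial\TT_{\hat\rho}$) deteriorate only by controlled geometric factors and hence stay bounded, so $(\conj_n,\al_n)\in\Difeo{\rho_n}\times A$ at every step. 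Summing the resulting geometric series for $\norm{\Delta \conj_n}_{\rho_{n+1}}$ and $|\Delta\al_n|$ produces Cauchy convergence to a pair $(\conj_\infty,\al_\infty)\in\Difeo{\rho_\infty}\times A$ with $e_\infty\equiv 0$, as well as the closeness bounds~\eqref{eq:KAM:C2} with explicit $\mathfrak{C}_2,\mathfrak{C}_3$; the limit pair inherits $\HI$--$\HII$ from the uniform bounds along the iteration.

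The main difficulty is not conceptual but quantitative: carefully tracking the evolution of every constant along the scheme and packaging the result into explicit $\mathfrak{C}_1,\mathfrak{C}_2,\mathfrak{C}_3$ rather than the usual asymptotic ``sufficiently small'' KAM statement. The most delicate analytic point is the treatment of the apparently linear remainder $e'(x)w(x)/\conj'(x+\rot)$: a naive bound would spoil the quadratic convergence, so one must absorb the derivative of $e$ through a Cauchy estimate on a dedicated sub-strip and then use the linear dependence of $w$ on $\norm{e}_\rho$ to recover genuine quadraticity, which is what ultimately drives the convergence and yields the sharp smallness condition in $\norm{e}_\rho/(\gamma^2\rho^{2\tau})$.
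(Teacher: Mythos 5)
Your proposal follows essentially the same route as the paper's proof: the same quasi-Newton step based on automatic reducibility (differentiating the conjugacy equation, the ansatz $\Delta\conj=\conj'\,w$, solving the cohomological equation via Lemma~\ref{lem:Russ}, fixing $\Delta\al$ by the average and the free constant by the normalisation~\eqref{eq:normah}), the same identification of $e'w$ as the quadratically small neglected term via a Cauchy estimate, and the same iteration on shrinking strips with uniform control of $\HI$--$\HII$; the only differences (a polynomially rather than geometrically decaying sequence $\delta_n$, and slightly cruder bookkeeping of the $\delta$-powers in the quadratic error bound) are immaterial. The argument is correct and matches the paper's proof of Lemma~\ref{lem:iter} and Theorem~\ref{theo:KAM}.
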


The proof of this result is based in a numerical scheme, proposed in
\cite{LlaveL11}, to compute Arnold ``tongues'' with Diophantine rotation
number. Indeed, the convergence details are adapted from \cite{CanadellH17a},
where the case of torus maps (associated to the inner dynamics of a
toroidal normally hyperbolic manifold) is considered.  We include here a short (but complete)
proof not only for the sake of completeness, but to obtain explicit and sharp
formulae for all the constants and conditions involved. Also, the proof of this
result is the anteroom of the more involved a-posteriori result, discussed in
the next section, which takes into account dependence on parameters.

The construction consists in correcting both the approximate conjugacy
$\conj(x)$ and the parameter $\al$. To this end, we perform an iterative
process introducing the corrected objects $\bar \conj(x)
= \conj(x) + \Delta_\conj(x)$ and $\bar \al= \al + \Delta_\al$. These
corrections are determined by solving approximately the linearized equation
\begin{equation}\label{eq:linear}
\partial_x \map_\al(\conj(x)) \Delta_\conj(x)-\Delta_\conj(x+\rot) + \partial_\al
\map_\al (\conj(x)) \Delta_\al = -e(x)\,,
\end{equation}
where the right\textcolor{blue}{-}hand side is the conjugacy error~\eqref{eq:inva}.
In order to ensure the normalization condition in~\eqref{eq:normah},
we look for a solution such that
\begin{equation}\label{eq:cond:aver}
\aver{\Delta_\conj} =0\,.
\end{equation}

By taking derivatives on both sides of equation~\eqref{eq:inva}, we obtain
\begin{equation}\label{eq:deriv}
\partial_x \map_\al (\conj(x)) \conj'(x) - \conj'(x+\rot) = e'(x) 
\end{equation}
and consider the following transformation
\begin{equation}\label{eq:group}
\Delta_\conj(x)=\conj'(x) \varphi(x)\, .
\end{equation}
Then, introducing~\eqref{eq:deriv} and~\eqref{eq:group}
into~\eqref{eq:linear}, and neglecting the term
$e'(x) \varphi(x)$ (we will see that this term is quadratic in the error) we
obtain that the solution of
equation \eqref{eq:linear} is approximated by the solution of a
cohomological equation for $\varphi$
\begin{equation}\label{eq:coho1}
\varphi(x+\rot) -
\varphi(x)
= \eta(x)\,,
\end{equation}
where the right-hand side is
\begin{equation}\label{eq:eta}
\eta(x):= a(x) + b(x) \Delta_\al\,,
\end{equation}
with
\begin{equation}\label{eq:aux:fun:a}
a(x) := \frac{e(x)}{\conj'(x+\theta)}\,,
\end{equation}
and $b(x)$ is given by~\eqref{eq:aux:fun:b}.

Equation~\eqref{eq:coho1} is solved by expressing $\eta(x)$ and
$\varphi(x)$ in Fourier series.
Then, we obtain a unique zero-average solution, denoted by $\varphi(x) = \cR
\eta(x)$, by taking
\begin{equation}\label{eq:solc}
\Delta_\al = -\frac{\aver{a}}{\aver{b}}\,, \qquad
\hat \varphi_k = \frac{\hat \eta_k}{\ee^{2\pi \ii k \theta}-1}\,,
\qquad k \neq 0\,,
\end{equation}
so that all solutions of~\eqref{eq:coho1} are of the form 
\begin{equation}\label{eq:solc:varphi}
\varphi(x) =
\hat\varphi_0 + \cR \eta(x)\,.
\end{equation}

Finally, the average $\hat \varphi_0 = \aver{\varphi}$ is selected in order
to fulfill condition \eqref{eq:cond:aver}.  Since this condition is equivalent to
$\aver{\conj'\varphi} =0$ we readily obtain
\begin{equation}\label{eq:aver0}
\hat \varphi_0=
-\aver{\conj' \cR \eta}\,.
\end{equation}

The proof of Theorem~\ref{theo:KAM} follows by applying the above
correction iteratively (quasi-Newton method), thus obtaining a sequence of corrected objects.
The norms of each correction and quantitative estimates for the new objects are
controlled 
by applying 
the following result.

\begin{lemma}\label{lem:iter}
Consider an open set $A \subset \RR$, a $C^2$-family $\al \in A \mapsto
\map_\al \in \Difeo{\hat \rho}$, with $\hat\rho>0$; and a rotation number
$\theta \in \RR$ satisfying hypotheses $\GI$ and $\GII$ 
in Theorem~\ref{theo:KAM}.  Assume that we have a pair $(\conj,\al)
\in\Difeo{\rho} \times A$ fulfilling hypotheses $\HI$ and $\HII$ 
in the same theorem.  If the following conditions on the error hold:
\begin{align}
& \frac{\sigh C_1}{\gamma \delta^{\tau}} \norm{e}_\rho < \dist{\conj(\bar \TT_\rho),\partial \TT_{\hat \rho}}\,,
\label{eq:cond:lem:1} \\
& \sighb \sighi \norm{e}_\rho < \dist{\al,\partial A}\,,
\label{eq:cond:lem:2} \\
& \frac{\sigh C_1}{\gamma \delta^{\tau+1}} \norm{e}_\rho < \sigh-\norm{h'}_\rho \,, 
\label{eq:cond:lem:3} \\
& \frac{(\sighi)^2 \sigh C_1}{\gamma \delta^{\tau+1}} \norm{e}_\rho < \sighi-\norm{1/h'}_\rho \,,
\label{eq:cond:lem:4} \\
& \frac{(\sighb)^2 C_2}{\gamma \delta^{\tau+1}} \norm{e}_\rho < \sighb-\abs{1/\aver{b}} \,,
\label{eq:cond:lem:5}
\end{align}
where
\begin{align}
C_1 := {} & (1+\sigh)c_R (1+c_\al \sighb \sighi ) \sighi \,, \label{eq:constC1} \\
C_2 := {} & 
\sigh (\sighi)^2 c_\al C_1 + c_{x\al} \sigh \sighi C_1 \delta +c_{\al \al}
\sighb (\sighi)^2 \gamma \delta^{\tau+1}\,, \label{eq:constC2}
\end{align}
then there exists a new pair
$(\bar \conj,\bar \al) \in \Difeo{\rho-\delta} \times A$,
with $\bar \conj = \conj + \Delta_\conj$
and $\bar \al = \al + \Delta_\al$, satisfying also
hypotheses $\HI$ (in the strip $\TT_{\rho-2\delta}$)
and $\HII$, and the estimates
\begin{equation}\label{eq:defor:al}
\abs{\bar \al - \al} =
\abs{\Delta_\al} 
< \sighb  \sighi \norm{e}_\rho \,
\end{equation}
and
\begin{equation}\label{eq:defor:h}
\norm{\bar \conj - \conj}_{\rho-\delta} =
\norm{\Delta_\conj}_{\rho-\delta} 
< \frac{\sigh C_1 \norm{e}_\rho}{\gamma
\delta^\tau}\,.
\end{equation}
The new conjugacy error, $\bar e(x) = \map_{\bar \al} (\bar
\conj(x))-\bar \conj(x+\rot)$, satisfies
\begin{equation}\label{eq:err}
\norm{\bar e}_{\rho-\delta} < \frac{C_3 \norm{e}_\rho^2}{\gamma^2
\delta^{2\tau}}\,,
\end{equation}
where
\begin{equation}\label{eq:constC3}
C_3 :=
C_1 \gamma \delta^{\tau-1}
+ \frac{1}{2} c_{xx} (\sigh C_1)^2 
+ c_{x \al} \sighb \sigh \sighi C_1 \gamma \delta^\tau 
+ \frac{1}{2} c_{\al \al} (\sighb \sighi)^2 \gamma^2 \delta^{2\tau}
\,.
\end{equation}
\end{lemma}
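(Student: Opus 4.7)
The plan is to derive everything from the explicit formulas for $\Delta_\conj = \conj'\varphi$ and $\Delta_\al$ constructed above the lemma, where $\varphi=\hat\varphi_0+\cR\eta$, $\eta=a+b\,\Delta_\al$, $\Delta_\al=-\aver{a}/\aver{b}$, and $\hat\varphi_0=-\aver{\conj'\,\cR\eta}$. First I would establish the size estimates~\eqref{eq:defor:al} and~\eqref{eq:defor:h}. The bound $|\Delta_\al|\leq|\aver{a}|\cdot|1/\aver{b}|\leq\sighi\sighb\norm{e}_\rho$ is immediate from $\norm{a}_\rho\leq\sighi\norm{e}_\rho$ and $\HII$. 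Then $\norm{\eta}_\rho\leq(1+c_\al\sighb\sighi)\sighi\norm{e}_\rho$; Lemma~\ref{lem:Russ} gives $\norm{\cR\eta}_{\rho-\delta}\leq c_R\norm{\eta}_\rho/(\gamma\delta^\tau)$ while $|\hat\varphi_0|\leq\sigh\norm{\cR\eta}_{\rho-\delta}$. Combining, $\norm{\varphi}_{\rho-\delta}\leq C_1\norm{e}_\rho/(\gamma\delta^\tau)$ with $C_1$ as in~\eqref{eq:constC1}, and then $\norm{\Delta_\conj}_{\rho-\delta}\leq\sigh\norm{\varphi}_{\rho-\delta}$ yields~\eqref{eq:defor:h}.

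Second, I would verify $\HI$ on $\TT_{\rho-2\delta}$ and $\HII$ for the new pair. The distance condition and $\bar\al\in A$ follow at once from~\eqref{eq:cond:lem:1} and~\eqref{eq:cond:lem:2}, while $\aver{\bar\conj-\id}=0$ is ensured by the choice of $\hat\varphi_0$. A Cauchy estimate gives $\norm{\Delta_\conj'}_{\rho-2\delta}\leq\delta^{-1}\norm{\Delta_\conj}_{\rho-\delta}$, so~\eqref{eq:cond:lem:3} yields $\norm{\bar\conj'}_{\rho-2\delta}<\sigh$. For $1/\bar\conj'$, I use the identity $1/\bar\conj'-1/\conj'=(\conj'-\bar\conj')/(\conj'\bar\conj')$ together with a bootstrap: under the candidate bound $\norm{1/\bar\conj'}_{\rho-2\delta}\leq\sighi$ the difference is controlled by $\sighi^{2}\norm{\Delta_\conj'}_{\rho-2\delta}$, and~\eqref{eq:cond:lem:4} closes the bootstrap. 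For $\HII$, I would decompose
\[
\bar b - b = \frac{\pd_\al\map_{\bar\al}(\bar\conj)-\pd_\al\map_\al(\conj)}{\bar\conj'(\cdot+\rot)} + \pd_\al\map_\al(\conj)\,\frac{\conj'(\cdot+\rot)-\bar\conj'(\cdot+\rot)}{\conj'(\cdot+\rot)\,\bar\conj'(\cdot+\rot)},
\]
apply the mean value theorem to the numerator using $\GI$, and insert the size estimates already obtained for $\Delta_\conj$, $\Delta_\al$, and $\Delta_\conj'$. A final bootstrap through $1/\aver{\bar b}-1/\aver{b}$ with~\eqref{eq:cond:lem:5} delivers $|1/\aver{\bar b}|<\sighb$; the algebra is exactly what produces the constant $C_2$ of~\eqref{eq:constC2}.

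Third and most substantial is the quadratic estimate~\eqref{eq:err}. Taylor-expanding $\map_{\bar\al}(\bar\conj(x))$ around $(\al,\conj(x))$ to second order and subtracting $\bar\conj(x+\rot)$ gives
\[
\bar e(x) = e(x) + \pd_x\map_\al(\conj(x))\Delta_\conj(x) - \Delta_\conj(x+\rot) + \pd_\al\map_\al(\conj(x))\Delta_\al + R_2(x),
\]
where $R_2$ is the second-order Taylor remainder in $(\al,x)$. Substituting $\Delta_\conj=\conj'\varphi$ and using identity~\eqref{eq:deriv} together with the cohomological equation $\varphi(\cdot+\rot)-\varphi=\eta=a+b\,\Delta_\al$ regroups the three linear terms into $-\conj'(\cdot+\rot)\eta + e'\varphi + \pd_\al\map_\al(\conj)\Delta_\al$. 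The identities $\conj'(\cdot+\rot)a=e$ and $\conj'(\cdot+\rot)b=\pd_\al\map_\al(\conj)$ then cause the $\Delta_\al$ contributions to cancel and the emerging $-e$ to absorb the $e(x)$ term, leaving the clean expression $\bar e = e'\varphi + R_2$. The remainder is controlled by the pointwise bound $|R_2|\leq\tfrac{1}{2}c_{xx}\norm{\Delta_\conj}^2 + c_{x\al}\norm{\Delta_\conj}|\Delta_\al| + \tfrac{1}{2}c_{\al\al}|\Delta_\al|^2$, and $\norm{e'}_{\rho-\delta}\leq\delta^{-1}\norm{e}_\rho$ by Cauchy. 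Collecting the four contributions under the common factor $\norm{e}_\rho^2/(\gamma^2\delta^{2\tau})$ reproduces $C_3$ in~\eqref{eq:constC3}. I expect the main difficulty to be the algebraic bookkeeping that exhibits the cancellation $\bar e=e'\varphi+R_2$, together with the careful tracking of constants through the bootstraps and the Taylor remainder so as to recover $C_1$, $C_2$, $C_3$ exactly as displayed.
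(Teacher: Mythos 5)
Your proposal is correct and follows essentially the same route as the paper: the same chain of estimates $\Delta_\al \to \eta \to \cR\eta \to \hat\varphi_0 \to \varphi \to \Delta_\conj$ producing $C_1$, the same verification of $\HI$, $\HII$ via conditions \eqref{eq:cond:lem:1}--\eqref{eq:cond:lem:5} yielding $C_2$, and the same identity $\bar e = e'\varphi + R_2$ with the Taylor remainder giving $C_3$ (your explicit check of the cancellation is exactly what the paper's construction encodes implicitly). The only cosmetic difference is that the paper closes the estimates on $1/\bar\conj'$ and $1/\aver{\bar b}$ with an explicit Neumann-series lemma, whereas you invoke a bootstrap through the reciprocal identity, which is a standard equivalent device.
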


\begin{proof}
The result follows by controlling the
norms of all the functions involved in the formal scheme described by
equations~\eqref{eq:linear} to~\eqref{eq:aver0}.
Using Cauchy estimates, we have $\norm{e'}_{\rho-\delta} \leq \delta^{-1}
\norm{e}_\rho$. Using hypothesis $\HI$ we directly control the
functions in~\eqref{eq:aux:fun:b} and~\eqref{eq:aux:fun:a}
\begin{equation}\label{eq:control:a:b}
\norm{b}_\rho \leq \norm{1/h'}_\rho \norm{\partial_\al \map_\al}_{A, \hat \rho} < \sighi c_\al\,,
\qquad
\norm{a}_\rho \leq \norm{1/h'}_\rho \norm{e}_\rho < \sighi \norm{e}_\rho\,.
\end{equation}
Then, using also $\HII$, the expression for
$\Delta_\al$ in \eqref{eq:solc} is controlled as
\[
\abs{\Delta_\al} \leq \abs{1/\aver{b}} \abs{\aver{a}} < \sighb \sighi \norm{e}_\rho\,,
\]
thus obtaining~\eqref{eq:defor:al}.
Hence
we control the function $\eta(x)$ in~\eqref{eq:eta} as
\[
\norm{\eta}_\rho 
\leq \norm{a}_\rho + \norm{b}_\rho\abs{\Delta_\al}
< (1+c_\al \sighb \sighi) \sighi \norm{e}_\rho \,.
\]
By decomposing $\varphi(x)=\hat \varphi_0+\cR \eta(x)$ and invoking
Lemma~\ref{lem:Russ}, with hypothesis $\GII$,
we obtain
\begin{equation}\label{eq:eta:c}
\norm{\cR \eta}_{\rho-\delta} \leq \frac{c_R}{\gamma \delta^\tau}
\norm{\eta}_\rho < \frac{c_R (1+c_\al \sighb \sighi) \sighi}{\gamma \delta^\tau} \norm{e}_\rho\, ,
\end{equation}
and we control the average $\hat \varphi_0$ using the expression \eqref{eq:aver0},
hypothesis $\HI$, and~\eqref{eq:eta:c}:
\begin{equation}\label{eq:eta:0}
\abs{\hat \varphi_0} \leq \norm{\conj'}_\rho \norm{\cR \eta}_{\rho-\delta} <
\sigh\frac{c_R (1+c_\al \sighb \sighi) \sighi}{\gamma
\delta^\tau} \norm{e}_\rho\,.
\end{equation}
By grouping expressions~\eqref{eq:eta:c} and~\eqref{eq:eta:0} we obtain
\begin{equation}\label{eq:varphi}
\norm{\varphi}_{\rho-\delta} \leq |\hat \varphi_0|+\norm{\cR\eta}_{\rho-\delta}< \frac{C_1}{\gamma \delta^\tau}
\norm{e}_{\rho}\, ,
\end{equation}
where $C_1$ is given in~\eqref{eq:constC1}.
Finally,
using~\eqref{eq:group} and $\HI$,
we obtain~\eqref{eq:defor:h}.

Now we control of the distance of the corrected objects to the boundaries of the
domains, i.e. we ensure that $\dist{\conj(\bar \TT_\rho),\partial \TT_{\hat
\rho}}>0$ and $\dist{\bar \al,\partial A} >0$. Using the assumption in \eqref{eq:cond:lem:1}
we have
\begin{align*}
\dist{\bar \conj(\bar \TT_{\rho-\delta}),\partial \TT_{\hat \rho}} \geq {} &
\dist{\conj(\bar \TT_\rho),\partial \TT_{\hat \rho}} -
\norm{\Delta_\conj}_{\rho-\delta} \\
> {} & \dist{\conj(\bar \TT_\rho),\partial \TT_{\hat \rho}} - \frac{\sigh
C_1}{\gamma \delta^\tau} \norm{e}_\rho>0 \,,
\end{align*}
and using the assumption in \eqref{eq:cond:lem:2}
we have
\[
\dist{\bar \al,\partial A} \geq 
\dist{\al,\partial A} - |\Delta_\al| > 
\dist{\al,\partial A} - \sighb \sighi\norm{e}_\rho >0 \,.
\]

Next we check that the new approximate conjugacy $\bar \conj(x)=\conj(x)+\Delta_\conj(x)$
satisfies $\HI$ in the strip $\TT_{\rho-2\delta}$:
\[
\norm{\bar \conj'}_{\rho-2\delta} < \sigh\,,
\qquad
\norm{1/\bar \conj'}_{\rho-2\delta} < \sighi\,,
\qquad
\abs{1/\aver{\bar b}} < \sighb\,.
\]

The first inequality in $\HI$ follows directly
using Cauchy estimates in~\eqref{eq:defor:h} and the assumption in~\eqref{eq:cond:lem:3}
\textcolor{blue}{:}
\begin{equation}\label{eq:mycond}
\norm{\bar \conj'}_{\rho-2\delta} \leq \norm{\conj'}_{\rho-2\delta} +
\norm{\Delta_\conj'}_{\rho-2\delta} < \norm{\conj'}_{\rho} + \frac{\sigh
C_1}{\gamma\delta^{\tau+1}} \norm{e}_\rho < \sigh\,.
\end{equation}

The second inequality follows using Neumann series: in general,
if $m \in \CC$ satisfies $|1/m|<\sigma$ and $\bar m \in \CC$ satisfies
\begin{equation}\label{eq:Neumann:hyp}
\frac{\sigma^2 \abs{\bar m-m}}{\sigma-\abs{1/m}} < 1\,,
\end{equation}
then we have
\begin{equation}\label{eq:Neumann:tesis}
\abs{1/\bar m} < \sigma\,,
\qquad
\abs{1/\bar m-1/m} < \sigma^2 \abs{\bar m-m}\,.
\end{equation}

Since hypothesis~\eqref{eq:cond:lem:4} implies
\begin{equation}\label{eq:checkH2}
\frac{(\sighi)^2 \norm{\bar \conj'-\conj'}_{\rho-2\delta}}{\sighi - \norm{1/\conj'}_\rho} \leq
\frac{(\sighi)^2 \sigh C_1}{\sighi - \norm{1/\conj'}_\rho} \frac{\norm{e}_\rho}{\gamma \delta^{\tau+1}} < 1\,,
\end{equation}
from~\eqref{eq:Neumann:hyp} and~\eqref{eq:Neumann:tesis} we obtain
\begin{equation}\label{eq:diffDh}
\norm{1/\bar \conj'-1/\conj'}_{\rho-2\delta} < (\sighi)^2 \norm{\bar \conj'-\conj'}_{\rho-2\delta} < \frac{(\sighi)^2 \sigh C_1}{\gamma \delta^{\tau+1}} \norm{e}_\rho\,,
\end{equation}
and also that the control $\norm{1/\bar \conj'}_{\rho-2\delta} < \sighi$ is preserved.

The control of $\abs{1/\aver{\bar b}} < \sighb$ in $\HII$ is similar. To this end, we first
write
\begin{align*}
\bar b(x) - b(x) = {} & 
\frac{\partial_\al \map_{\bar \al} (\bar \conj(x))}{\bar \conj'(x+\theta)}
- 
\frac{\partial_\al \map_{\bar \al} (\conj(x))}{\bar \conj'(x+\theta)} 
+
\frac{\partial_\al \map_{\bar \al} (\conj(x))}{\bar\conj'(x+\theta)}
- 
\frac{\partial_\al \map_\al (\conj(x))}{\bar\conj'(x+\theta)} \\
& +
\frac{\partial_\al \map_{\al} (\conj(x))}{\bar \conj'(x+\theta)}
- 
\frac{\partial_\al \map_{\al} (\conj(x))}{\conj'(x+\theta)} \,,
\end{align*}
so, using~\eqref{eq:defor:al}, \eqref{eq:defor:h} and~\eqref{eq:diffDh}, we get the estimate
\[
\left| \aver{\bar b} -\aver{b} \right| \leq 
c_{x\al} \sighi \norm{\Delta_\conj}_{\rho-\delta}+
c_{\al \al} \sighi \abs{\Delta_\al}
+c_\al \norm{1/\bar \conj'-1/\conj'}_{\rho-2\delta} < \frac{C_2}{\gamma \delta^{\tau+1}} \norm{e}_\rho \,,
\]
where $C_2$ is given by~\eqref{eq:constC2}.
Then, we repeat the computation in~\eqref{eq:checkH2}
using the assumption in \eqref{eq:cond:lem:5}. This provides the 
condition $\abs{1/\aver{\bar b}}<\sighb$ in $\HII$.

Finally, the new error of invariance is given by
\[
\bar e(x) = \map_{\bar \al}(\bar \conj(x))-\bar \conj(x+\rot) = e'(x) \varphi(x) +
\Delta^2\map(x) \,,
\]
where
\begin{align}
\Delta^2 \map (x) = {} &
f_{\bar\al}(\bar h(x))-f_\al(h(x))-\pd_x f_\al(h(x)) \Delta_h(x)-\pd_\al f_\al(h(x)) \Delta_\al \nonumber \\
= {} &
\int_0^1 (1-t) 
\left(
 G_{xx} (x)\Delta_\conj(x)^2 
+ 2 G_{x \al}(x)  \Delta_\conj(x) \Delta_\al
+G_{\al\al} (x)\Delta_\al^2
\right)
dt\,,
\label{eq:D2F}
\end{align}
and we use the notation $G_{xx}(x):=\partial_{xx} f_{\al+t \Delta_\al}(\conj(x)+t \Delta_\conj(x))$
and similar for $G_{x \al}(x)$ and $G_{\al \al}(x)$.
Using $\GI$
and the estimates~\eqref{eq:defor:al} and~\eqref{eq:defor:h} for the corrections, we obtain
the bound in~\eqref{eq:err}.
\end{proof}

\begin{proof}[Proof of Theorem~\ref{theo:KAM}]
To initialize the iterative method,
we introduce the notation
$\conj_0=\conj$, $\al_0=\al$, and $e_0=e$. Notice that Lemma~\ref{lem:iter}
provides
control of the analytic domains after each iteration. 
At the $s$-th iteration, we denote
$\rho_s$ the strip of analyticity (with $\rho_0=\rho$) and
$\delta_s$ the loss of strip produced at this step (with $\delta_0=\delta$). Then, we take
\begin{equation}\label{eq:a1}
\rho_s = \rho_{s-1}-2\delta_{s-1}\,,\qquad
\delta_s = \frac{\delta_{s-1}}{a_1}\,,\qquad
a_1 := \frac{\rho_0- \rho_\infty}{\rho_0-2\delta_0-\rho_\infty}>1\,,
\end{equation}
where $\rho_\infty$ is the final strip.  For convenience, we introduce
the auxiliary constants
\begin{equation}\label{eq:a2a3}
a_2 = \frac{\rho_0}{\rho_\infty} > 1\,, \qquad
a_3 = \frac{\rho_0}{\delta_0}> 2\,,
\end{equation}
and observe that the following relation involving $a_1$,
$a_2$, and $a_3$ holds
\[
a_3 = 2 \frac{a_1}{a_1-1} \frac{a_2}{a_2-1}\,.
\]
In accordance to the previous notation,
we denote by $\conj_s$, $\al_s$, and
$e_s$
the objects at the
$s$-step of the quasi-Newton method.

\bigskip
\emph{Existence:} We proceed by induction, assuming that 
we have successfully applied  $s$ times
Lemma~\ref{lem:iter}.
At this point,
we use~\eqref{eq:err} to
control the error of the last conjugacy in terms of the initial one:
\begin{equation}\label{eq:conv:err}
\begin{split}
\norm{e_s}_{\rho_s} < {} & 
\frac{C_3}{\gamma^2 \delta_{s-1}^{2\tau}} \norm{e_{s-1}}_{\rho_{s-1}}^2 =
\frac{C_3 a_1^{2\tau(s-1)}}{\gamma^2 \delta_{0}^{2\tau}} \norm{e_{s-1}}_{\rho_{s-1}}^2 \\
< {} & \left(
\frac{C_3 a_1^{2\tau }}{\gamma^2 \delta_{0}^{2\tau}}
\norm{e_0}_{\rho_0} 
\right)^{2^s-1}
a_1 ^{-2\tau s} \norm{e_0}_{\rho_0} 
\,,
\end{split}
\end{equation}
where we used that $1+2+\ldots+2^{s-1}=2^s-1$ and
$1(s-1)+2(s-2)+\ldots+2^{s-2}1=2^s-s-1$. 
The above computation motivates the
condition
\begin{equation}\label{eq:cond1}
\kappa:=
\frac{C_3 a_1^{2\tau }}{\gamma^2 \delta_{0}^{2\tau}}
\norm{e_0}_{\rho_0} < 
1 \,,
\end{equation}
included in~\eqref{eq:KAM:C1}. 
Under this condition, the sum
\begin{equation}\label{eq:mysum}
\Sigma_{\kappa,\lambda}:=
\sum_{j=0}^\infty 
\kappa^{2^j-1} a_1^{-\lambda j}
\end{equation}
is convergent for any $\lambda \in \RR$.

Now, using expression~\eqref{eq:conv:err}, we check the inequalities in
Lemma~\ref{lem:iter} (that is \eqref{eq:cond:lem:1}, \eqref{eq:cond:lem:2},
\eqref{eq:cond:lem:3}, \eqref{eq:cond:lem:4}, and~\eqref{eq:cond:lem:5}), so
that we can perform the step $s+1$. The required sufficient condition will be
also included in~\eqref{eq:KAM:C1}.  To simplify the computations, we 
consider that the constants $C_1$, $C_2$, and $C_3$ are evaluated at the worst
value of $\delta_s$ (that is $\delta_0$) so that they can be taken 
to be equal at all steps. For example,
the inequality~\eqref{eq:cond:lem:1} is obtained as follows:
\begin{align}
\dist{\conj_s(\bar \TT_{\rho_s}),\partial \TT_{\hat \rho}} - \frac{\sigh
C_1 \norm{e_s}_{\rho_s} }{\gamma \delta_s^{\tau}} 
> {} & \dist{\conj_0(\bar \TT_{\rho_0}),\partial \TT_{\hat \rho}} - \sum_{j=0}^\infty \frac{\sigh
C_1 \norm{e_j}_{\rho_j}}{\gamma \delta_j^{\tau}} \nonumber \\
> {} &\dist{\conj_0(\bar \TT_{\rho_0}),\partial \TT_{\hat \rho}} - \frac{\sigh
C_1 \Sigma_{\kappa,\tau} \norm{e_0}_{\rho_0}}{\gamma \delta_0^{\tau}}
> 0\,, \label{eq:KAM:C1:cond1}
\end{align}
where we used $\conj_s=\conj_{s-1}+\Delta_{\conj_{s-1}}$, \eqref{eq:conv:err} and \eqref{eq:mysum}.
The last inequality in~\eqref{eq:KAM:C1:cond1} is included in~\eqref{eq:KAM:C1}.
The control of~\eqref{eq:cond:lem:2} is completely analogous:
\begin{equation}\label{eq:KAM:C1:cond2}
\dist{\al_s,\partial A} - \sighi \sigh \norm{e_s}_{\rho_s}
> \dist{\al_0,\partial A} - \sighi \sigh \Sigma_{\kappa,2\tau} \norm{e_0}_{\rho_0}
> 0\,, 
\end{equation}
and the last inequality in~\eqref{eq:KAM:C1:cond2} is included in~\eqref{eq:KAM:C1}.
The condition in~\eqref{eq:cond:lem:3} is obtained using
$\conj_s=\conj_{s-1}+\Delta_{\conj_{s-1}}$ and \eqref{eq:conv:err}:
\begin{align}
\norm{\conj_s'}_{\rho_s} + \frac{\sigh C_1}{\gamma \delta_s^{\tau+1}}
\norm{e_s}_{\rho_s} & < \norm{\conj_0'}_{\rho_0} + \sum_{j=0}^s
\frac{\sigh
C_1}{\gamma \delta_j^{\tau+1}} \norm{e_j}_{\rho_j} \nonumber \\
& < \norm{\conj_0'}_{\rho_0} + \frac{\sigh C_1 \Sigma_{\kappa,\tau-1}}{\gamma \delta_0^{\tau+1}}
\norm{e_0}_{\rho_0} < \sigh\,, \label{eq:KAM:C1:cond3}
\end{align}
and the last inequality in~\eqref{eq:KAM:C1:cond3} is included in~\eqref{eq:KAM:C1}.
Analogous computations allow us to guarantee the conditions in~\eqref{eq:cond:lem:4}
and~\eqref{eq:cond:lem:5} for
$\norm{1/\conj_s'}_{\rho_s}$ and $\abs{1/\aver{b_s}}$,
respectively. To this end, we also include in~\eqref{eq:KAM:C1} the inequalities
\begin{align}
&\norm{1/\conj_0'}_{\rho_0} + \frac{(\sighi)^2\sigh C_1 \Sigma_{\kappa,\tau-1}}{\gamma \delta_0^{\tau+1}}
\norm{e_0}_{\rho_0} < \sighi \,, \label{eq:KAM:C1:cond4} \\
&\abs{1/\aver{b_0}} + \frac{(\sighb)^2C_2 \Sigma_{\kappa,\tau-1}}{\gamma \delta_0^{\tau+1}}
\norm{e_0}_{\rho_0} < \sighb \,. \label{eq:KAM:C1:cond5}
\end{align}

Putting together the assumptions in~\eqref{eq:cond1},
\eqref{eq:KAM:C1:cond1},
\eqref{eq:KAM:C1:cond2},
\eqref{eq:KAM:C1:cond3},
\eqref{eq:KAM:C1:cond4},
\eqref{eq:KAM:C1:cond5},
and recalling that
$\rho/\delta=\rho_0/\delta_0=a_3$, $h=h_0$, $\al=\al_0$, we end up with
\begin{equation}\label{eq:frakC1}
\mathfrak{C}_1 := 
\left\{
\begin{array}{ll}
\max \left\{
(a_1 a_3)^{2 \tau}
C_3, (a_3)^{\tau+1} C_4 \gamma \rho^{\tau-1}
\right\}  & \mbox{if $\kappa<1$} \,, \\
\infty & \mbox{otherwise} \,,
\end{array}
\right.
\end{equation}
where $\kappa$ is given by~\eqref{eq:cond1} and
\begin{align}
C_4 := \max \Bigg\{ &
\frac{\sigh C_1 \delta \Sigma_{\kappa,\tau}}{\dist{\conj(\bar \TT_{\rho}),\partial \TT_{\hat \rho}}}\,,
\frac{\sighi \sigh \Sigma_{\kappa,2\tau} \gamma \delta^{\tau+1}}{\dist{\al,\partial A}}\,, \label{eq:const4} \\
& \frac{\sigh C_1 \Sigma_{\kappa,\tau-1}}{\sigh-\norm{\conj'}_\rho}\,,
\frac{(\sighi)^2 \sigh C_1 \Sigma_{\kappa,\tau-1}}{\sighi-\norm{1/\conj'}_\rho}\,,
\frac{(\sighb)^2 C_2 \Sigma_{\kappa,\tau-1}}{\sighb-\abs{1/\aver{b}}}
\Bigg\} \,.\nonumber
\end{align}

As a consequence of the above computations, we can apply
Lemma~\ref{lem:iter} again. 
By induction, we obtain
a convergent sequence
$\norm{e_s}_{\rho_s} \rightarrow 0$ when $s \rightarrow \infty$.
Conclusively, the
iterative scheme converges to a true conjugacy $\conj_{\infty} \in
\Difeo{\rho_\infty}$ for the map
$\map_{\infty}=\map_{\al_{\infty}}$.

\bigskip
\emph{Closeness:} The above computations also prove that the
conjugacy $\conj_{\infty}$ and the parameter $\al_{\infty}$
are close to the initial objects:
\begin{align*}
& \norm{\conj_\infty-\conj}_{\rho_\infty} < \sum_{j=0}^\infty
\norm{\Delta_{\conj_j}}_{\rho_j} <
\sum_{j=0}^\infty \frac{\sigh C_1 \norm{e_j}_{\rho_j}}{\gamma \delta_j^\tau}
< \frac{\sigh C_1 \Sigma_{\kappa,\tau}}{\gamma \delta_0^\tau} \norm{e_0}_{\rho_0}\,,\\
& \abs{\al_\infty-\al} < \sighb \sighi \sum_{j=0}^\infty \norm{e_j}_{\rho_j} <
\sighb \sighi \Sigma_{\kappa,2\tau} \norm{e_0}_{\rho_0}\,,
\end{align*}
and we finally obtain
\begin{equation}\label{eq:frakC2C3}
\mathfrak{C}_2 = a_3^\tau \sigh C_1 \Sigma_{\kappa,\tau} \,,
\qquad
\mathfrak{C}_3 = \sighb \sighi \Sigma_{\kappa,2\tau}\,. \qedhere
\end{equation}
\end{proof}

\begin{remark}
Here we summarize how to compute the constants
$\mathfrak{C}_1$,
$\mathfrak{C}_2$,
$\mathfrak{C}_3$ in Theorem~\ref{theo:KAM}. Given fixed values
of the parameters
$\rho,\delta,\rho_\infty,\hat\rho$
and the distances
$\dist{\conj(\bar \TT_\rho),\partial \TT_{\hat \rho}}$
and
$\dist{\al,\partial A}$; the constants
$c_x$, $c_\al$, $c_{xx}$, $c_{x\al}$, $c_{\al\al}$ in hypothesis $\GI$;
the constants $\gamma$ and $\tau$ in hypothesis $\GII$;
the constants $\sigma_1$, $\sigma_2$ in hypothesis $\HI$;
and the constant $\sigma_b$ in hypothesis $\HII$, these are computed in the following order:
\begin{itemize}[leftmargin=5mm]
\item $a_1$, $a_2$, $a_3$ using~\eqref{eq:a1} and~\eqref{eq:a2a3}.
\item $C_1$, $C_2$, $C_3$ using~\eqref{eq:constC1}, \eqref{eq:constC2}
and~\eqref{eq:constC3}.
\item $\kappa$ using~\eqref{eq:cond1} and check that $\kappa<1$ (abort the process otherwise).
\item $\Sigma_{\kappa,\tau}$,
$\Sigma_{\kappa,2\tau}$,
$\Sigma_{\kappa,\tau-1}$ using~\eqref{eq:mysum}.
\item $C_4$ using~\eqref{eq:const4}.
\item $\mathfrak{C}_1$,
$\mathfrak{C}_2$,
$\mathfrak{C}_3$ using~\eqref{eq:frakC1} and~\eqref{eq:frakC2C3}.
\end{itemize}
\end{remark}

\section{An a-posteriori theorem for the measure of conjugations}\label{ssec:conjL}

In this section we present an a-posteriori result that extends
Theorem~\ref{theo:KAM} considering dependence on parameters.  The statement is
a detailed version of the theorem that was informally exposed in the
introduction of the paper.  Our aim is to use a global approximation of the
conjugacies to rotation, within a range of rotation numbers, in order to obtain
a lower bound of the measure of the set of parameters for which a true
conjugation exists.

\begin{theorem}\label{theo:KAM:L}
Consider an open set $A \subset \RR$,
a $C^3$-family $\al \in A \mapsto \map_\al \in \Difeo{\hat \rho}$,
with $\hat \rho>0$,
and a set $\Theta \subset \RR$ 
fulfilling:
\begin{itemize}
\item [$\GI$]
For every $1 \leq i+j\leq 3$, there exist constants
$c^{i,j}_{x,\al}$ such that
$\norm{\partial_{x,\al}^{i,j} \map_\al}_{A, \hat \rho} \leq c_{x,\al}^{i,j}$.
For convenience we will write $c_x=c_{x,\al}^{1,0}$, $c_{x\al}=c_{x,\al}^{1,1}$, $c_{\al\al\al}=c_{x,\al}^{0,3}$, etc.
\item [$\GII$] 
We have $\Theta \subset \cD(\gamma,\tau)$.
\end{itemize}
Assume that we have a family of pairs
\[
\theta \in \Theta \longmapsto 
(h_\theta,\al(\theta))
\in \Difeo{\rho} \times A\,,
\]
with $\rho>0$,
such that the following quantitative estimates are satisfied:
\begin{itemize}
\item [$\HI$]
For every $\theta \in \Theta$ the map
$h_\theta$ satisfies 
\[
\dist{\conj_\theta(\bar \TT_\rho),\partial \TT_{\hat \rho}}>0\,,
\]
and
\begin{equation}\label{eq:normah:L}
\aver{\conj_\theta-\id}=0\,.
\end{equation}
Moreover, there exist constants $\sigh$, $\sighi$ and $\sighxx$
such that
\[
\norm{\pd_x \conj}_{\Theta,\rho} < \sigh\,, \qquad
\norm{1/\pd_x \conj}_{\Theta,\rho} < \sighi\,, \qquad
\norm{\pd_{xx} \conj}_{\Theta,\rho} < \sighxx\,.
\]
\item [$\HII$] The average of the family of functions
\begin{equation}\label{eq:aux:fun:b:L}
b_\theta(x) := \frac{\partial_\al \map_{\al(\theta)} (h_\theta(x))}{\pd_x \conj_\theta(x+\theta)}
\end{equation}
is different from zero for every $\theta \in \Theta$. Moreover, there exist a constant $\sighb$ such that 
\[
\norm{1/\aver{b}}_{\Theta} < \sighb\,.
\]
\item [$\HIII$] There exist constants $\betah$,
$\betaDh$,
and $\betaa$ such that
\[
\Lip{\Theta,\rho}(\conj-\id) < \betah\,, \qquad
\Lip{\Theta,\rho}(\partial_x \conj) < \betaDh\,,
\qquad
\Lip{\Theta}(\al) < \betaa\,.
\]
\end{itemize}
Then, for any $0<\delta<\rho/2$ and $0<\rho_\infty<\rho-2\delta$,
there exist constants $\mathfrak{C}_1^\sLip$ and $\mathfrak{C}_2^\sLip$
such that:
\begin{itemize}
\item [$\TI$]
Existence of conjugations: If 
\begin{equation}\label{eq:KAM:C1:L}
\frac{\mathfrak{C}_1^\sLip
\max \left\{ \norm{e}_{\Theta,\rho} \,,\,\gamma \delta^{\tau+1}\Lip{\Theta,\rho}(e)\right\}}{\gamma^2 \rho^{2\tau+2}}
< 1\,,
\end{equation}
where
\[
e_\theta(x):= \map_{\al(\rot)}(\conj_\theta(x))-\conj_\theta(x+\rot)
\]
is the associated family of error functions,
then there exists a family of pairs
$\theta \in \Theta \mapsto (h_{\theta,\infty},\al_\infty(\theta)) \in \Difeo{\rho_\infty} \times A$
such that
\[
\map_{\al_\infty(\theta)}(\conj_{\theta,\infty}(x))=\conj_{\theta,\infty}(x+\rot)\,,\qquad
\forall \theta \in \Theta
\]
and also satisfying $\HI$, $\HII$, and $\HIII$.
\item [$\TII$] Closeness: the family $\theta \in \Theta \mapsto
(h_{\theta,\infty}, \al_\infty (\theta))$ is close
to the original one:
\begin{equation}\label{eq:KAM:C2:L}
\norm{\conj_\infty-\conj}_{\Theta,\rho_\infty} <
\frac{\mathfrak{C}_2 \norm{e}_{\Theta,\rho}}{\gamma \rho^\tau}\,,
\qquad
\norm{\al_{\infty}-\al}_{\Theta} < \mathfrak{C}_3 \norm{e}_{\Theta,\rho} \,,
\end{equation}
where the constants 
$\mathfrak{C}_2$
and
$\mathfrak{C}_3$
are computed in the same way as the analogous
constants in Theorem~\ref{theo:KAM}, i.e., are given by~\eqref{eq:frakC2C3}.
\item [$\TIV$] Measure of rotations: the Lebesgue measure of conjucacies 
to rigid rotation
in the space of parameters is bounded from below as follows
\[
\meas (\al_\infty(\Theta)) >
\left[
\lip{\Theta}(\al)
- 
\frac{\mathfrak{C}_2^\sLip
\max \left\{ \norm{e}_{\Theta,\rho} \,,\,\gamma \delta^{\tau+1}\Lip{\Theta,\rho}(e)\right\}}{\gamma \rho^{\tau+1}}
\right]
\meas(\Theta)\,.
\]
\end{itemize}
\end{theorem}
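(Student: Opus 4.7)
The plan is to iterate the quasi-Newton correction of Lemma~\ref{lem:iter} in parallel for every $\theta \in \Theta$, while simultaneously propagating Lipschitz estimates on the whole family via properties $\PI$--$\PVI$ of Lemma~\ref{lem:Lipschitz}. For each fixed $\theta$ the hypotheses $\GI$, $\GII$, $\HI$, $\HII$ of Theorem~\ref{theo:KAM} are in force, so pointwise existence of $(\conj_{\theta,\infty},\al_\infty(\theta))$ is automatic; the real content of the new theorem is $(i)$ to show that the limits form a Lipschitz family, with Lipschitz constants controlled by a combination of $\norm{e}_{\Theta,\rho}$ and $\Lip{\Theta,\rho}(e)$, and $(ii)$ to extract the measure bound $\TIV$ from these Lipschitz estimates.

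First, I would upgrade Lemma~\ref{lem:iter} to a Lipschitz version. Starting from the explicit formulas for $a_\theta$, $b_\theta$ in~\eqref{eq:aux:fun:a}, \eqref{eq:aux:fun:b:L}, I would apply $\PI$--$\PV$ to bound $\Lip{\Theta,\rho}(a)$ and $\Lip{\Theta,\rho}(b)$ in terms of $\Lip{\Theta,\rho}(e)$, $\Lip{\Theta,\rho}(\conj)$, $\Lip{\Theta}(\al)$ and uniform norms supplied by $\HI$--$\HIII$. Then $\PVI$ is invoked to control $\Lip{\Theta,\rho-\delta}(\R{}\eta)$; this is the pivotal step, because $\PVI$ introduces an extra factor $1/(\gamma\delta^{\tau+1})$ over the uniform Rüssmann estimate. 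Consequently the right scaled Lipschitz seminorm to propagate is $\gamma\delta^{\tau+1}\Lip{\Theta,\rho}(e)$, which is exactly why the smallness condition~\eqref{eq:KAM:C1:L} is phrased in terms of the $\max$ of $\norm{e}_{\Theta,\rho}$ and $\gamma\delta^{\tau+1}\Lip{\Theta,\rho}(e)$. Coupling this with the existing bounds for the uniform norms and using $\PII$--$\PIV$ on the analogue of~\eqref{eq:D2F}, I would derive quadratic decay of both $\norm{\bar e}_{\Theta,\rho-\delta}$ and $\gamma\delta^{\tau+1}\Lip{\Theta,\rho-\delta}(\bar e)$ in terms of $\max\{\norm{e}_{\Theta,\rho},\gamma\delta^{\tau+1}\Lip{\Theta,\rho}(e)\}^2$. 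The fact that the Taylor remainder~\eqref{eq:D2F} now carries Lipschitz derivatives in $\al$ is the reason $\GI$ must be strengthened to $i+j\le 3$.

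With such a Lipschitz one-step lemma in hand, I would run the iteration of Theorem~\ref{theo:KAM} unchanged, using the same geometric sequences $\rho_s$, $\delta_s$, the same constants $a_1,a_2,a_3$ from~\eqref{eq:a1}--\eqref{eq:a2a3}, and the convergent sum $\Sigma_{\kappa,\lambda}$ in~\eqref{eq:mysum}. Induction then gives $\norm{e_s}_{\Theta,\rho_s}\to 0$ and $\gamma\delta_s^{\tau+1}\Lip{\Theta,\rho_s}(e_s)\to 0$ quadratically, so the limit families $\conj_\infty$ and $\al_\infty$ inherit the Lipschitz regularity and still satisfy $\HI$, $\HII$, $\HIII$ on $\Theta$. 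Telescoping the pointwise estimates~\eqref{eq:defor:al}--\eqref{eq:defor:h} exactly as in~\eqref{eq:frakC2C3} gives the closeness conclusion $\TII$ with the same constants $\mathfrak{C}_2,\mathfrak{C}_3$, while telescoping the corresponding Lipschitz estimates yields an additional bound
\[
\Lip{\Theta}(\al_\infty-\al) \;<\; \frac{\mathfrak{C}_2^\sLip\,\max\{\norm{e}_{\Theta,\rho},\,\gamma\delta^{\tau+1}\Lip{\Theta,\rho}(e)\}}{\gamma\rho^{\tau+1}}\,,
\]
where $\mathfrak{C}_2^\sLip$ packages the constants $C_1,C_2,C_3$ from~\eqref{eq:constC1}--\eqref{eq:constC3}, their Lipschitz analogues, and the corresponding $\Sigma_{\kappa,\lambda}$ sums.

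For $\TIV$ I would combine this Lipschitz closeness with the elementary inequality $\lip{\Theta}(\al_\infty)\ge \lip{\Theta}(\al)-\Lip{\Theta}(\al_\infty-\al)$. Under~\eqref{eq:KAM:C1:L} the right-hand side is strictly positive, so $\al_\infty$ is bi-Lipschitz on $\Theta$, hence injective, and the standard Lipschitz area inequality $\meas(\al_\infty(\Theta))\ge \lip{\Theta}(\al_\infty)\meas(\Theta)$ delivers the stated lower bound. The main obstacle is the parallel book-keeping of the two norms through the Newton iteration: because $\PVI$ produces an asymmetric loss between uniform and Lipschitz norms, one must choose the weight $\gamma\delta^{\tau+1}$ carefully so that the weighted Lipschitz norm of the error contracts quadratically at the same rate as the uniform norm, and the inductive verification of $\HI$--$\HIII$ at every step (analogous to the checks~\eqref{eq:KAM:C1:cond1}--\eqref{eq:KAM:C1:cond5}) closes with a single smallness condition of the form~\eqref{eq:KAM:C1:L}.
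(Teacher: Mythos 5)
Your proposal follows essentially the same route as the paper: a Lipschitz analogue of the one-step lemma built from properties $\PI$--$\PVI$ (the paper's Lemma~\ref{lem:iter:L}), quadratic contraction of exactly the weighted error $\max\{\norm{e}_{\Theta,\rho},\gamma\delta^{\tau+1}\Lip{\Theta,\rho}(e)\}$ driven by the asymmetric loss in $\PVI$, inductive preservation of $\HI$--$\HIII$ folded into a single condition of the form~\eqref{eq:KAM:C1:L}, and the measure bound obtained from $\lip{\Theta}(\al_\infty)\geq\lip{\Theta}(\al)-\sum_j\Lip{\Theta}(\Delta_{\al_j})$ together with transporting $\meas(\Theta)$ through the Lipschitz-from-below map $\al_\infty$. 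This matches the paper's proof in structure and in all the key quantitative choices, including the role of the third derivatives in the Taylor remainder.
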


\begin{remark}
Notice that hypothesis $\HII$ plays the role of a \emph{twist condition},
guaranteeing
that each rotation number in the domain appears exactly one.
If this condition is violated in a particular example, then the interval $A$ should
be divided into subintervals and apply the theorem in each of them.
\end{remark}

\begin{remark}\label{rem:common}
Notice that, replacing the norms $\norm{\cdot}_\rho$ and $\abs{\cdot}$
by $\norm{\cdot}_{\Theta,\rho}$ and $\norm{\cdot}_\Theta$ respectively,
we have that part of the hypotheses of Theorem~\ref{theo:KAM:L}
are in common with those of Theorem~\ref{theo:KAM}. Taking this
into account, we will omit the details associated to the control
of the norms $\norm{\cdot}_\rho$ and $\abs{\cdot}$, since we can
mimic these estimates from those obtained in Section~\ref{ssec:conj}.
\end{remark}

The proof of this result follows by adapting the construction presented in
Section~\ref{ssec:conj}, but controlling the Lipschitz constants of the objects
involved in the iterative scheme. 
Notice that our main interest is to show that
$\al_\infty$ is Lipschitz from below and to obtain sharp lower bounds.
To this end, using that
$\al_s=\al_0+\al_s-\al_0$, we have that
\[
\lip{\Theta}(\al_s) > \lip{\Theta}(\al_0) -
\Lip{\Theta}(\al_{s}-\al_{0})\,,
\]
so the effort is focused in controlling the Lipschitz constants from above.

The following result provides quantitative estimates for the
norm of the corrected objects.

\begin{lemma}\label{lem:iter:L}
Consider an open set $A \subset \RR$,
a $C^3$-family $\al \in A \mapsto \map_\al \in \Difeo{\hat \rho}$,
with $\hat\rho>0$, and a set $\Theta \subset \RR$
satisfying the hypotheses $\GI$ and $\GII$ of Theorem \ref{theo:KAM:L}.
Assume that we have a family of pairs
\[
\theta \in \Theta \longmapsto 
(h_\theta,\al(\theta))
\in \Difeo{\rho} \times A
\]
fulfilling hypotheses $\HI$, $\HII$, and $\HIII$ of the same theorem.  
Assume that, adapting the expressions for the norms
$\norm{\cdot}_{\Theta,\rho}$ and $\norm{\cdot}_{\Theta}$, the family of errors
$\theta \in \Theta \mapsto e_\theta$ satisfies
conditions like~\eqref{eq:cond:lem:1},
\eqref{eq:cond:lem:2}, \eqref{eq:cond:lem:3}, \eqref{eq:cond:lem:4},
\eqref{eq:cond:lem:5}, and the additional conditions:
\begin{align}
& 
\frac{\CLiph}{\gamma^2 \delta^{2\tau+1}} \norm{e}_{\Theta,\rho} +
\frac{\sigh C_1}{\gamma \delta^\tau} \Lip{\Theta,\rho}(e) < \betah - \Lip{\Theta,\rho}(h-\id)\,,
\label{eq:cond:lem:1:L} \\
&
\frac{\CLiph}{\gamma^2 \delta^{2\tau+2}} \norm{e}_{\Theta,\rho} +
\frac{\sigh C_1}{\gamma \delta^{\tau+1}} \Lip{\Theta,\rho}(e) < \betaDh - \Lip{\Theta,\rho}(\pd_x h)\,,
\label{eq:cond:lem:2:L} \\
& 
\CLipO \norm{e}_{\Theta,\rho}+\sighb \sighi \Lip{\Theta,\rho}(e) < \betaa - \Lip{\Theta}(\al)\,,
\label{eq:cond:lem:3:L} \\
& \frac{2\sigh C_1}{\gamma \delta^{\tau+2}} \norm{e}_{\Theta,\rho} < \sighxx-
\norm{\partial_{xx}h}_{\Theta,\rho}\,,
\label{eq:cond:lem:4:L} 
\end{align}
where constant $C_1$ is computed as in~\eqref{eq:constC1} and we introduce the new constants
\begin{align}
& \CLipO :=
\sighb (\sighi)^2 (\betaDh+\sighxx)(1+c_\al \sighb\sighi )
+ (\sighi)^2 (\sighb)^2 [c_{\al \al} \betaa + c_{x \al}
\betah]
\label{eq:C0L}\\
& \CLipI := 
\sighi[(c_{\al \al} \betaa+c_{x\al} \betah) \sighb \sighi + c_\al C_0^\sLip
+ (1+c_\al \sighb \sighi) \sighi (\betaDh+\sighxx)]
\,, \label{eq:C1L} \\
& \CLipII := c_R \CLipI \gamma \delta^{\tau+1}+\hat c_R(1+c_\al\sighb \sighi)\sighi \,, \label{eq:CLipII} \\
& \CLipIII := 
\CLipII (\sigh+1) + \betaDh c_R(1+c_\al \sighb \sighi ) \sighi\gamma \delta^{\tau+1} \,,
\label{eq:C2L} \\
& \CLiph :=
\betaDh C_1 \gamma \delta^{\tau+1}  + \sigh \CLipIII \,.
\label{eq:ChL} 
\end{align}
Then, there is a new family $\theta \in \Theta \mapsto
\bar \conj_\theta \in \Difeo{\rho-\delta}$,
with $\bar \conj_\theta = \conj_\theta + \Delta_{\conj_\theta}$,
and a new function
$\theta \in \Theta \mapsto \bar \al \in A$, with
$\bar \al(\theta) = \al(\theta) + \Delta_{\al}(\theta)$, satisfying
also hypotheses $\HI$, $\HII$, and $\HIII$ of Theorem~\ref{theo:KAM:L}
in the strip $\TT_{\rho-2\delta}$; and the estimates
\begin{equation}\label{eq:defor:al:L}
\norm{\bar \al - \al}_\Theta =
\norm{\Delta_\al}_\Theta
< \sighb \sighi \norm{e}_{\Theta,\rho} \,,
\end{equation}
\begin{equation}\label{eq:defor:h:L}
\norm{\bar \conj - \conj}_{\Theta,\rho-\delta} =
\norm{\Delta_\conj}_{\Theta,\rho-\delta} 
< \frac{\sigh C_1 \norm{e}_{\Theta,\rho}}{\gamma
\delta^\tau}\,,
\end{equation}
\begin{equation}\label{eq:defor:al:L2}
\Lip{\Theta}(\Delta_\al)
< 
C_0^\sLip \norm{e}_{\Theta,\rho}
+
\sighb \sighi \Lip{\Theta,\rho}(e) 
\,,
\end{equation}
and
\begin{equation}\label{eq:defor:h:L2}
\Lip{\Theta,\rho-\delta}(\Delta_\conj)
< 
\frac{\CLiph}{\gamma^2 \delta^{2\tau+1}}
\norm{e}_{\Theta,\rho}+
\frac{\sigh C_1 }{\gamma
\delta^\tau} \Lip{\Theta,\rho}(e)\,.
\end{equation}
The new error $\bar e_\theta(x) = \map_{\bar \al(\theta)} (\bar
\conj_\te(x))-\bar \conj_\te(x+\rot)$, satisfies
\begin{equation}
\label{eq:errL:L} 
\norm{\bar e}_{\Theta,\rho-\delta} < \frac{C_3 \norm{e}_{\Theta,\rho}^2}{\gamma^2
\delta^{2\tau}}\,,
\end{equation}
and
\begin{equation}
\label{eq:errL:L2} 
\Lip{\Theta,\rho-\delta}(\bar e) < 
\frac{\CLipIV \norm{e}_{\Theta,\rho}^2}{
\gamma^3
\delta^{3\tau+1}}+
\frac{2 C_3 \Lip{\Theta,\rho}(e) \norm{e}_{\Theta,\rho}}{\gamma^2
\delta^{2\tau}}\,,
\end{equation}
where $C_3$ is computed as in~\eqref{eq:constC3} and
\begin{align}
& \CLipIV := 
\CLipIII \gamma \delta^{\tau-1} +\tfrac{1}{2} (c_{xx\al}\betaa+c_{xxx}\betah)(\sigh C_1)^2 
\gamma \delta^{\tau+1}
\label{eq:C3L} \\
& \quad + (c_{x\al\al} \betaa+c_{xx\al} \betah) 
\sigh C_1 \sighb \sighi \gamma^2 \delta^{2\tau+1} \nonumber \\
& \quad + 
\tfrac{1}{2}(c_{\al\al\al}\betaa+c_{x\al\al} \betah) (\sighb \sighi)^2
\gamma^3 \delta^{3\tau+1}
\nonumber \\
& \quad + \CLiph (c_{xx}
\sigh C_1 + c_{x\al} \sighb \sighi\gamma \delta^\tau) \nonumber \\
& \quad + C_0^\sLip \gamma^2 \delta^{2\tau+1} (c_{x\al} \sigh C_1 + c_{\al \al} \sighb \sighi
\gamma\delta^\tau) \,, \nonumber
\end{align}
\end{lemma}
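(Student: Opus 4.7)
The plan is to repeat the correction scheme of Lemma~\ref{lem:iter} pointwise in $\theta\in\Theta$, and then to control, on top of the sup-norm estimates that have already been proved there, the Lipschitz constants of every intermediate object. In view of Remark~\ref{rem:common}, the sup-norm estimates \eqref{eq:defor:al:L}, \eqref{eq:defor:h:L}, \eqref{eq:errL:L}, together with the verification of $\HI$ and $\HII$ in the strip $\TT_{\rho-2\delta}$ (and hence the analogues of conditions \eqref{eq:cond:lem:1}--\eqref{eq:cond:lem:5}), follow verbatim from the proof of Lemma~\ref{lem:iter} applied pointwise, with $\norm{\cdot}_\rho$ and $|\cdot|$ replaced by $\norm{\cdot}_{\Theta,\rho}$ and $\norm{\cdot}_\Theta$. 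Therefore, the real new work is the family of Lipschitz estimates \eqref{eq:defor:al:L2}, \eqref{eq:defor:h:L2}, \eqref{eq:errL:L2}, and the verification of $\HIII$ and of $\norm{\pd_{xx}\bar h}_{\Theta,\rho-2\delta}<\sighxx$ for the corrected family.

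For \eqref{eq:defor:al:L2} and \eqref{eq:defor:h:L2}, I would walk through the construction $a_\theta,b_\theta\mapsto\Delta_\al(\theta)\mapsto\eta_\theta\mapsto\varphi_\theta\mapsto\Delta_{h_\theta}$, each time applying the appropriate rule from Lemma~\ref{lem:Lipschitz}. Starting from $b_\theta(x)=\pd_\al f_{\al(\theta)}(h_\theta(x))/\pd_x h_\theta(x+\theta)$ and $a_\theta(x)=e_\theta(x)/\pd_x h_\theta(x+\theta)$, properties $\PII$--$\PIV$ give $\Lip_{\Theta,\rho}(b)$ and $\Lip_{\Theta,\rho}(a)$ in terms of $\betah,\betaDh,\betaa,\sighxx$ and the second derivative bounds $c_{x\al},c_{\al\al}$. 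Using $\PIII$ on $\Delta_\al=-\aver{a}/\aver{b}$ yields \eqref{eq:defor:al:L2}, with $\CLipO$ collecting the resulting coefficients as in~\eqref{eq:C0L}. Next, by $\PI$ and $\PII$, $\Lip_{\Theta,\rho}(\eta)$ is bounded in the form $\CLipI\norm{e}_{\Theta,\rho}+(\text{const})\Lip_{\Theta,\rho}(e)$; applying $\PVI$ to $\cR\eta$ (this is the place where the factor $\gamma^{-2}\delta^{-(2\tau+1)}$ appears), then Cauchy estimates on $\pd_x h_\theta$ and the definition of $\hat\varphi_0$, one obtains a bound for $\Lip_{\Theta,\rho-\delta}(\varphi)$ of the same shape. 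Finally, $\PII$ applied to $\Delta_h=\pd_x h\cdot\varphi$ together with $\PV$ and hypothesis $\HIII$ delivers \eqref{eq:defor:h:L2} with the constant $\CLiph$ in~\eqref{eq:ChL}.

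For the new error, I would start from the same decomposition used in Lemma~\ref{lem:iter},
\[
\bar e_\theta(x)=e_\theta'(x)\varphi_\theta(x)+\Delta^2 f_\theta(x),
\]
with $\Delta^2 f_\theta$ given by the integral remainder \eqref{eq:D2F}. The sup-norm bound \eqref{eq:errL:L} has already been inherited from Lemma~\ref{lem:iter}, so I only need Lipschitz control. For $e'\varphi$ I use $\PII$ together with $\PV$ (to replace $\Lip(e')$ by $\delta^{-1}\Lip(e)$) and the bounds on $\varphi$ obtained in the previous paragraph. For $\Delta^2 f_\theta$, I apply $\PIV$ inside the integrand: each of $G_{xx},G_{x\al},G_{\al\al}$ is differentiated once more in $x$ or $\al$ to extract its Lipschitz constant in $\theta$, which is exactly why the family is assumed to be $C^3$ and the third-order constants $c_{xxx},c_{xx\al},c_{x\al\al},c_{\al\al\al}$ appear in \eqref{eq:C3L}. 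Combining the resulting Lipschitz bounds on $G_{\bullet\bullet}$ with the already established estimates on $\Delta_h,\Delta_\al$ and their Lipschitz versions gives~\eqref{eq:errL:L2}, with $\CLipIV$ collecting all the contributions.

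Once these estimates are in place, verifying that the corrected family still satisfies $\HI$, $\HII$, and $\HIII$ is a direct comparison: for $\HIII$, using $\bar h-\id=h-\id+\Delta_h$ and $\PI$, the bound on $\Lip_{\Theta,\rho-\delta}(\bar h-\id)$ follows from \eqref{eq:defor:h:L2} together with the assumption \eqref{eq:cond:lem:1:L}; similarly \eqref{eq:cond:lem:2:L} controls $\Lip_{\Theta,\rho-2\delta}(\pd_x\bar h)$ via $\PV$, \eqref{eq:cond:lem:3:L} handles $\Lip_\Theta(\bar\al)$ via \eqref{eq:defor:al:L2}, and \eqref{eq:cond:lem:4:L} together with Cauchy's estimate preserves the bound on $\pd_{xx}\bar h$. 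The main technical obstacle I anticipate is the bookkeeping: because $\PVI$ loses a factor $\delta^{-(2\tau+1)}$ and is applied to quantities that themselves are products of several Lipschitz factors, one must carefully re-collect terms of the correct homogeneity in $\gamma^{-1}\delta^{-\tau-1}$ in order for the final constants $\CLipI,\CLipII,\CLipIII,\CLiph,\CLipIV$ to come out as stated, and so that the quadratic smallness in $\max\{\norm{e}_{\Theta,\rho},\gamma\delta^{\tau+1}\Lip_{\Theta,\rho}(e)\}$ is visible in \eqref{eq:errL:L2}, which is what will drive the convergence in the next section.
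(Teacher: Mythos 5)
Your proposal is correct and follows essentially the same route as the paper's proof: inherit the sup-norm estimates and the verification of $\HI$, $\HII$ from Lemma~\ref{lem:iter} (Remark~\ref{rem:common}), then propagate Lipschitz bounds through $a$, $b$, $\Delta_\al$, $\eta$, $\cR\eta$ (via $\PVI$, which is where the $\gamma^{-2}\delta^{-(2\tau+1)}$ factor enters), $\hat\varphi_0$, $\varphi$, $\Delta_\conj$, control $\Delta^2 f$ with the third-order constants and the bounds $\sup_t\Lip{\Theta,\rho-\delta}(h+t\Delta_h)<\betah$, $\sup_t\Lip{\Theta}(\al+t\Delta_\al)<\betaa$, and verify $\HIII$ and the $\sighxx$-bound with the new conditions \eqref{eq:cond:lem:1:L}--\eqref{eq:cond:lem:4:L}. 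The only minor deviation is that the paper bounds $\Lip{\Theta,\rho}(\partial_x \conj)$ directly by $\betaDh$ from $\HIII$ (rather than by a Cauchy-type estimate) when treating $\hat\varphi_0$, which is what makes the constants come out exactly as in \eqref{eq:C2L}--\eqref{eq:ChL}.
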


\begin{proof}
Following the proof of Lemma~\ref{lem:iter} we consider the objects
\begin{equation}\label{eq:eta:L}
\eta_\te(x):= a_\te(x) + b_\te(x) \Delta_\al(\theta)\,,
\qquad
\varphi_\te(x)=\hat \varphi_0(\theta)+\cR_\te \eta_\te(x)\,,
\end{equation}
where
\begin{equation}\label{eq:Deltaa:L}
a_\te(x)=\frac{e_\te(x)}{\partial_x h_\te(x+\te)}\,,
\qquad
\Delta_\al(\theta) = -\frac{\aver{a_\te}}{\aver{b_\te}}\,,
\end{equation}
and $b_\theta(x)$ is given by~\eqref{eq:aux:fun:b:L}.
Since we are assuming the hypotheses of Lemma~\ref{lem:iter},
we can use the intermediate estimates obtained in the proof
of that result (see Remark~\ref{rem:common}). In particular, the
estimates in~\eqref{eq:defor:al:L}, \eqref{eq:defor:h:L}, \eqref{eq:errL:L}
and the control on the distances
\[
\dist{\bar h_\te(\bar \TT_{\rho-\delta}),\partial \TT_{\hat\rho}}>0\,,
\qquad
\dist{\bar \alpha(\te),\partial A}>0\,,
\]
follow straightly.

Using property $\PV$ (of Lemma~\ref{lem:Lipschitz}) we readily obtain that
\begin{equation}\label{eq:Cerr:L}
\Lip{\Theta,\rho-\delta} (\partial_x e) \leq
\frac{\Lip{\Theta,\rho}(e)}{\delta}\,.
\end{equation}
Then, we control the function $\Delta_{\alpha}(\theta)$ in~\eqref{eq:Deltaa:L}
using property $\PIII$ and hypotheses
$\HI$, $\HII$
\begin{align*}
\Lip{\Theta}(\Delta_{\al}) \leq {} & 
\Lip{\Theta,\rho}(a) \Norm{1/\aver{b}}_{\Theta} 
+\norm{a}_{\Theta,\rho} \Norm{1/\aver{b}}_{\Theta}^2 \Lip{\Theta}(b)\\
\leq {} & 
\sighb \Lip{\Theta,\rho}(a) +
\norm{e}_{\Theta,\rho} \sighi (\sighb)^2 \Lip{\Theta,\rho}(b) \,.
\end{align*}
Now we control the Lipschitz bound
of the map $a_\te(x)$ in~\eqref{eq:Deltaa:L}, 
writing 
the rigid rotation as $R_\theta(x)=x+\theta$
and using $\PIII$, $\PIV$, and $\HIII$:
\begin{align}
\Lip{\Theta,\rho}(a) \leq {} & 
\Lip{\Theta,\rho}(e) \norm{1/\partial_x h}_{\Theta,\rho}+
\norm{e}_{\Theta,\rho} \Lip{\Theta,\rho}(1/\partial_x h\circ R) 
\nonumber \\
\leq {} & 
\Lip{\Theta,\rho}(e) \sighi +
\norm{e}_{\Theta,\rho} 
\left(
\Lip{\Theta,\rho}(1/\pd_x h) + \norm{\pd_x(1/\pd_x h)}_{\Theta,\rho}
\Lip{\Theta,\rho}(R)
\right)
\nonumber \\
\leq {} & \Lip{\Theta,\rho}(e) \sighi +
\norm{e}_{\Theta,\rho} (\sighi)^2 (\betaDh+\sighxx)\,, \label{eq:Liea:L}
\end{align}
where we used that $\Lip{\Theta,\rho}(R) = 1$,
\begin{equation}
\label{eq:Lpdh}
\Lip{\Theta,\rho}(1/\partial_x h) \leq 
\norm{1/\partial_x h}_{\Theta,\rho}^2 
\Lip{\Theta,\rho}(\partial_x h) < (\sighi)^2 \betaDh
\,, 
\end{equation}
and
\[
\norm{\pd_x(1/\pd_x h)}_{\Theta,\rho} \leq \norm{1/\pd_x h}_{\Theta,\rho}^2 \norm{\pd_{xx} h}_{\Theta,\rho} < (\sighi)^2 \sighxx\,.
\]
The Lipschitz control of the function $b_\theta(x)$ is analogous:
\begin{align}
\Lip{\Theta,\rho}(b) 
\leq {} &
\Lip{\Theta,\rho}(\pd_\al f_\al \circ h) \norm{1/\pd_x h}_{\Theta,\rho}
+ \norm{\pd_\al f_\al}_{\Theta,\hat \rho} \Lip{\Theta,\rho}(1/\pd_x h \circ R)
\nonumber     \\
\leq {} & (c_{\al\al} \betaa + c_{x\al} \betah) \sighi +
c_\al (\sighi)^2 (\betaDh+\sighxx)\,.\label{eq:Lieb:L}
\end{align}
Putting together the above computations we obtain~\eqref{eq:defor:al:L2}
with the constant $C_0^\sLip$ in~\eqref{eq:C0L}.

The control of $\eta_\theta$, given in~\eqref{eq:eta:L}, yields to
\[
\Lip{\Theta,\rho}(\eta) \leq
\Lip{\Theta,\rho}(a)+
\Lip{\Theta,\rho}(b) \norm{\Delta_\al}_{\Theta} 
+\norm{b}_{\Theta,\rho} \Lip{\Theta} (\Delta_\al)\,,
\]
where we used properties $\PI$ and $\PII$. 
Using the previous estimates in~\eqref{eq:control:a:b},
\eqref{eq:defor:al:L},
\eqref{eq:defor:al:L2},
\eqref{eq:Liea:L},
\eqref{eq:Lieb:L},
$\GI$,
$\HI$ and
$\HII$, and the expression of $\CLipI$ in~\eqref{eq:C1L}, we obtain
\[
\Lip{\Theta,\rho}(\eta) \leq
\CLipI  \norm{e}_{\Theta,\rho}
+ (1+c_\al \sighb \sighi) \sighi
\Lip{\Theta,\rho}(e)\,,
\]
where $\CLipI$ is given in~\eqref{eq:C1L}.
Then, we estimate the Lipschitz constant of the zero-average solution 
$\cR_\te \eta_\te(x)$ using $\PVI$ and $\GII$
\begin{equation}\label{eq:Reta:L}
\Lip{\Theta,\rho-\delta}(\cR \eta) \leq
\frac{\CLipII}{\gamma^2 \delta^{2\tau+1}} \norm{e}_{\Theta,\rho}
+ \frac{c_R (1+c_\al \sighb \sighi) \sighi
 }{\gamma \delta^\tau}\Lip{\Theta,\rho}(e)\,.
\end{equation}
Next, we control the average $\hat \varphi_0(\theta)=-\aver{\pd_x h_\te \cR\eta_\te}$
(we use $\HI$, $\HIII$ and the estimates~\eqref{eq:eta:c} and~\eqref{eq:Reta:L})
\begin{align*}
\Lip{\Theta}(\hat \varphi_0) \leq {} &
\Lip{\Theta,\rho}(\partial_x h) \norm{\cR \eta}_{\Theta,\rho-\delta}+
\norm{\partial_x h}_{\Theta,\rho} \Lip{\Theta,\rho-\delta}(\cR \eta) \\
\leq {} & \frac{\betaDh  c_R (1+c_\al \sighb \sighi) \sighi \gamma \delta^{\tau+1}
+ \sigh \CLipII}{\gamma^2 \delta^{2\tau+1}} \norm{e}_{\Theta,\rho} \\
& + \frac{\sigh c_R (1+c_\al \sighb \sighi) \sighi
 }{\gamma \delta^\tau}\Lip{\Theta,\rho}(e)\,,
\end{align*}
and we put together the previous two expressions as follows (using property $\PI$):
\begin{equation}\label{eq:varphi:L}
\Lip{\Theta,\rho-\delta}(\varphi) \leq 
\Lip{\Theta}(\hat \varphi_0) + \Lip{\Theta,\rho-\delta}(\cR \eta)
=: \frac{\CLipIII}{\gamma^2 \delta^{2\tau+1}} \norm{e}_{\Theta,\rho} +
\frac{C_1}{\gamma \delta^\tau} \Lip{\Theta,\rho}(e)\,,
\end{equation}
where $\CLipIII$ is given by~\eqref{eq:C2L}
and $C_1$ is given by~\eqref{eq:constC1}.

Then, the estimate in~\eqref{eq:defor:h:L2} is straightforward using that $\Delta_{h_{\theta}}(x)=\partial_x h_\theta(x) \varphi_\theta(x)$, 
the estimates \eqref{eq:varphi} and \eqref{eq:varphi:L}, together with
$\PII$, $\HI$ and $\HIII$.
In addition, we control the new maps
$\bar h_\theta(x)=h_\theta(x) + \Delta_{h_\theta}(x)$ as
\begin{align*}
\Lip{\Theta,\rho-\delta}(\bar h-\id) \leq {} & \Lip{\Theta,\rho}(h-\id)+\Lip{\Theta,\rho-\delta}(\Delta_h) \\
\leq {} &  \Lip{\Theta,\rho}(h-\id) +
\frac{\CLiph}{\gamma^2 \delta^{2\tau+1}} \norm{e}_{\Theta,\rho}
+ \frac{\sigh C_1}{\gamma \delta^\tau} \Lip{\Theta,\rho}(e) < \betah \,,
\end{align*}
where we used hypothesis~\eqref{eq:cond:lem:1:L}. The control of
$\partial_x \bar h_\theta(x)$
is analogous:
\begin{align*}
\Lip{\Theta,\rho-2\delta}(\partial_x \bar h) \leq {} &
\Lip{\Theta,\rho}(\partial_x h)+\Lip{\Theta,\rho-2\delta}(\partial_x \Delta_h) \\
\leq {} &  \Lip{\Theta,\rho}(\partial_x h) +
\frac{\CLiph}{\gamma^2 \delta^{2\tau+2}} \norm{e}_{\Theta,\rho}
+ \frac{\sigh C_1}{\gamma \delta^{\tau+1}} \Lip{\Theta,\rho}(e) < \betaDh \,,
\end{align*}
where we used hypothesis~\eqref{eq:cond:lem:2:L}. An analogous computation shows
that the smallness hypothesis in~\eqref{eq:cond:lem:3:L} guarantees that
$\Lip{\Theta}(\bar \al) < \betaa$.
Then, the smallness hypothesis in~\eqref{eq:cond:lem:4:L} guarantees that
$\norm{\pd_{xx}\bar h}_{\theta,\rho-2\delta} < \sighxx$. The computations are
similar to~\eqref{eq:mycond}:
\[
\norm{\pd_{xx}\bar \conj}_{\rho-2\delta} \leq \norm{\pd_{xx} \conj}_{\rho-2\delta} +
\norm{\pd_{xx}\Delta_\conj}_{\rho-2\delta} < \norm{\pd_{xx} \conj}_{\rho} + \frac{2\sigh
C_1}{\gamma\delta^{\tau+2}} \norm{e}_\rho < \sighxx\,.
\]

Finally, we control the new error of invariance
\[
\bar e_\theta(x) = f_{\bar \al(\theta)}(\bar h_\theta(x))-\bar h_\theta(x+\theta) =
\partial_x e_\theta(x) \varphi_\theta(x)+\Delta^2 f_\theta(x)
\]
using~\eqref{eq:varphi}, \eqref{eq:Cerr:L}, and~\eqref{eq:varphi:L}:
\begin{align}
\Lip{\Theta,\rho-\delta}(\bar e) \leq {} & \Lip{\Theta,\rho-\delta}(\partial_x e) \norm{\varphi}_{\Theta,\rho-\delta}
+\norm{\partial_x e}_{\Theta,\rho-\delta} \Lip{\Theta,\rho-\delta}(\varphi) +
\Lip{\Theta,\rho-\delta}(\Delta^2 f) \nonumber \\
\leq {} & 
\frac{\CLipIII}{\gamma^2 \delta^{2\tau+2}}\norm{e}_{\Theta,\rho}^2 + 
\frac{2C_1}{\gamma \delta^{\tau+1}}\norm{e}_{\Theta,\rho}\Lip{\Theta,\rho}(e)
+\Lip{\Theta,\rho-\delta}(\Delta^2 f)\,, \label{eq:errL:aux}
\end{align}
and it remains to control the map $\Delta^2 f_\theta(x)$, given by~\eqref{eq:D2F}.
This last term is controlled as follows
\begin{align*}
& \Lip{\Theta,\rho-\delta}(\Delta^2 f) \leq
\frac{1}{2} \Big(
\norm{\partial_{xx\al}f}_{\Theta,\rho} \betaa +
\norm{\partial_{xxx}f}_{\Theta,\rho} \betah
\Big) \norm{\Delta_h}_{\Theta,\rho-\delta}^2 \\
& \qquad +\norm{\partial_{xx} f}_{\Theta,\rho} \Lip{\Theta,\rho-\delta}(\Delta_h) \norm{\Delta_h}_{\Theta,\rho-\delta}\\
& \qquad + 
\Big(
\norm{\partial_{x\al\al}f}_{\Theta,\rho} \betaa
+
\norm{\partial_{xx\al}f}_{\Theta,\rho} 
\betah
\Big) \norm{\Delta_h}_{\Theta,\rho-\delta} \norm{\Delta_\al}_\Theta \\
& \qquad + \norm{\partial_{x\al} f}_{\Theta,\rho} 
\Big(
\Lip{\Theta,\rho-\delta}(\Delta_h) \norm{\Delta_\al}_{\Theta}
+
\norm{\Delta_h}_{\Theta,\rho-\delta} \Lip{\Theta}(\Delta_\al)
\Big) \\
& \qquad + 
\frac{1}{2} \Big(
\norm{\partial_{\al\al\al}f}_{\Theta,\rho} \betaa
+\norm{\partial_{x\al\al}f}_{\Theta,\rho} \betah
\Big) \norm{\Delta_\al}_{\Theta}^2 \\
& \qquad +\norm{\partial_{\al\al} f}_{\Theta,\rho} \Lip{\Theta}(\Delta_\al) \norm{\Delta_\al}_{\Theta,\rho}\,,
\end{align*}
where we used that
\begin{align*}
\sup_{t \in [0,1]} \Lip{\Theta,\rho-\delta}(h+t\Delta_h) \leq {} &
\Lip{\Theta,\rho}(h)+\Lip{\Theta,\rho-\delta}(\Delta_h) < \betah\,,\\
\sup_{t \in [0,1]} \Lip{\Theta}(\al+t\Delta_\al) \leq {} 
& \Lip{\Theta}(\al)+\Lip{\Theta}(\Delta_\al) < \betaa\,.
\end{align*}
Then, we use hypotheses $\GI$, $\HI$, $\HII$, $\HIII$, and the previous estimates for the
objects $\Delta_{h_\theta}(x)$ and $\Delta_{\al}(\theta)$.
Finally, we introduce this estimate into equation~\eqref{eq:errL:aux} and after
some tedious computations we obtain
that~\eqref{eq:errL:L2} holds using the constants $C_3$ and $\CLipIV$
given respectively by \eqref{eq:constC3} and \eqref{eq:C3L}.
\end{proof}

\begin{proof}[Proof of Theorem~\ref{theo:KAM:L}]
We reproduce the analysis of the convergence performed in
Theorem~\ref{theo:KAM}.
To this end, we consider the same sequences $\rho_s$, $\delta_s$ and
denote the corresponding objects at the $s$-th step by
$h_s$, $\al_s$ and $e_s$.

\bigskip
\emph{Existence of conjugations:} The conditions for the convergence of the sequence $\norm{e_s}_{\Theta,\rho_s}$
and the control of the norms of the objects are the same that we already obtained in
Theorem~\ref{theo:KAM}, so we must include them into~\eqref{eq:KAM:C1:L}. In the
following, we focus our attention in the control of the Lipschitz constants.
It is convenient to introduce the following weighted error:
\[
\cE_s:= \max\left\{\norm{e_s}_{\Theta,\rho_s}\,,\,\gamma \delta_{s}^{\tau+1}\Lip{\Theta,\rho_s}(e_s) \right\}\,,
\]
so that
\begin{equation}\label{eq:prop:w}
\norm{e_s}_{\Theta,\rho_s} \leq \cE_s\,,
\qquad
\Lip{\Theta,\rho_s}(e_s) \leq \frac{\cE_s}{\gamma \delta_{s}^{\tau+1}}\,.
\end{equation}
Using \eqref{eq:errL:L}, \eqref{eq:errL:L2} and the properties in \eqref{eq:prop:w},
we have
\begin{align}
\cE_s < {} & \max \Bigg\{
\frac{C_3\norm{e_{s-1}}_{\Theta,\rho_{s-1}}^2 }{\gamma^2 \delta_{s-1}^{2\tau}}
\,,\,
\frac{\gamma \delta_s^{\tau+1}\CLipIV \norm{e_{s-1}}_{\Theta,\rho_{s-1}}^2}{\gamma^3 \delta_{s-1}^{3\tau+1}} \nonumber \\
& \qquad\qquad\qquad\qquad
+
\frac{2C_3\gamma \delta_{s}^{\tau+1}}{\gamma^2 \delta_{s-1}^{2\tau}} \Lip{\Theta,\rho_{s-1}}(e_{s-1})
\norm{e_{s-1}}_{\Theta,\rho_{s-1}} 
\Bigg\} \nonumber
\\
< {} &  \frac{\max\{C_3\,,\,(\CLipIV+2C_3)(a_1)^{-\tau-1} \}}{\gamma^2 \delta_{s-1}^{2\tau}} \cE_{s-1}^2 =:
\frac{\CLipEE}{\gamma^2 \delta_{s-1}^{2\tau}} \cE_{s-1}^2\,. \label{eq:CLieEE}
\end{align}
Then, we reproduce the computations in~\eqref{eq:conv:err}-\eqref{eq:cond1}
asking for 
the condition
\begin{equation}\label{eq:cond1:L}
\mu := \frac{\CLipEE a_1^{2\tau}}{\gamma^2 \delta_0^{2\tau}}
\cE_0
<1\,
\end{equation}
where the constant $\CLipEE$ is evaluated at the worst case
$\delta_0$. We obtain that
\[
\cE_s
< \mu^{2^s-1} a_1^{-2\tau s}
\cE_0 \,.
\]
Then, we must check that the inequalities in~\eqref{eq:cond:lem:1:L},
\eqref{eq:cond:lem:2:L} and~\eqref{eq:cond:lem:3:L} are preserved along the iterative
procedure. Again, we assume that the constants $\CLipI$ and $\CLipIII$, given
by~\eqref{eq:C1L} and~\eqref{eq:C2L} respectively, are evaluated at
$\delta_0$. For example, we compute the following
\begin{align}
& \Lip{\Theta,\rho_s}(h_s-\id) + \frac{\CLiph}{\gamma^2 \delta_s^{2\tau+1}} \norm{e_s}_{\Theta,\rho_s} +
\frac{\sigh C_1}{\gamma \delta_s^\tau} \Lip{\Theta,\rho_s}(e_s)
\nonumber \\
& \qquad < \Lip{\Theta,\rho_s}(h_s-\id) + \frac{\CLiph + \sigh C_1}{\gamma^2 \delta_s^{2\tau+1}} \cE_s
\nonumber \\
& \qquad < \Lip{\Theta,\rho_0}(h_0-\id) + \sum_{j=0}^\infty \frac{\CLiph+\sigh C_1}{\gamma^2 \delta_j^{2\tau+1}} \cE_j \nonumber \\
& \qquad < \Lip{\Theta,\rho_0}(h_0-\id) + \frac{\CLipV\Sigma_{\mu,-1}}{\gamma^2 \delta_0^{2\tau+1}}
\cE_0
< \betah \label{eq:cond2:L}
\end{align}
where we used \eqref{eq:mysum}, \eqref{eq:prop:w} and introduced the constant
\begin{equation}\label{eq:C4L}
\CLipV := \CLiph
+ \sigh C_1 \,.
\end{equation}
As usual, the last inequality in~\eqref{eq:cond2:L} must be included in the
hypothesis~\eqref{eq:KAM:C1:L}. An analogous computation leads us to
\begin{align}
& \Lip{\Theta,\rho_s}(\partial_x h_s) + \frac{\CLiph}{\gamma^2 \delta_s^{2\tau+2}} \norm{e_s}_{\Theta,\rho_s} +
\frac{\sigh C_1}{\gamma \delta_s^{\tau+1}} \Lip{\Theta,\rho_s}(e_s)
\nonumber \\
& \qquad < \Lip{\Theta,\rho_0}(\pd_x h_0) + \frac{\CLipV \Sigma_{\mu,-2}}{\gamma^2 \delta_0^{2\tau+2}}
\cE_0< \betaDh \label{eq:cond3:L}
\end{align}
which is also included in~\eqref{eq:KAM:C1:L}. Finally, we have
\begin{equation}\label{eq:cond4:L}
\Lip{\Theta}(\al_s) < \Lip{\Theta}(\al_0)+ \frac{\CLipVI \Sigma_{\mu,\tau-1}}{\gamma \delta_0^{\tau+1}} 
\cE_0
< \betaa\,,
\end{equation}
which is also included in~\eqref{eq:KAM:C1:L}, where we introduced the constant
\begin{equation}\label{eq:C6L}
\CLipVI := C_0^\sLip \gamma \delta_0^{\tau+1} + \sighb \sighi\,.
\end{equation}
Putting together \eqref{eq:cond1:L},
\eqref{eq:cond2:L}, \eqref{eq:cond3:L}, \eqref{eq:cond4:L},
and \eqref{eq:KAM:C1:cond1}, \eqref{eq:KAM:C1:cond2},
\eqref{eq:KAM:C1:cond3}, \eqref{eq:KAM:C1:cond4}, \eqref{eq:KAM:C1:cond5}, and
using that $\rho/\delta=\rho_0/\delta_0=a_3$, we end up with
\begin{equation}\label{eq:frakC1:L}
\mathfrak{C}_1^\sLip := 
\left\{
\begin{array}{ll}
\max \left\{
(a_1 a_3)^{2\tau} \rho^2 \CLipEE,(a_3)^{\tau+1} C_4 \gamma
\rho^{\tau+1},\CLipVII
\right\}
& \mbox{if $\mu<1$} \,, \\
\infty & \mbox{otherwise} \,,
\end{array}
\right.
\end{equation}
where $\mu$ is given by~\eqref{eq:cond1:L}, 
$C_4$ is given by~\eqref{eq:const4} and we have introduced the new
constant
\begin{align}
\CLipVII := \max
\Bigg\{ &
\frac{\CLipV \Sigma_{\mu,-1} (a_3)^{2\tau+1} \rho}{\betah-\Lip{\Theta,\rho}(h-\id)}
,
\frac{\CLipV \Sigma_{\mu,-2} (a_3)^{2\tau}}{\betaDh-\Lip{\Theta,\rho}(\partial_x h)}
, 
\label{eq:C9L} \\
& \frac{\CLipVI \Sigma_{\mu,\tau-1} \gamma (a_3 \rho)^{\tau+1}}{\betaa-\Lip{\Theta}(\al)}
\Bigg\}\,. \nonumber
\end{align}
Notice that,
since $\CLipEE \geq C_3$, it turns out that
the condition \eqref{eq:KAM:C1:L} in Theorem~\ref{theo:KAM:L}
includes the condition \eqref{eq:KAM:C1} in Theorem~\ref{theo:KAM}. As a consequence,
we can apply Lemmata~\ref{lem:iter} and~\ref{lem:iter:L} again.
Therefore, the sequence
$
\cE_s
$
tends to zero when $s \rightarrow \infty$. The iterative scheme converges to a
family
$\theta \in \Theta \mapsto h_{\theta,\infty} \in \Difeo{\rho_\infty}$
and a function $\theta \in \Theta \mapsto \al_\infty(\theta) \in A$,
such that
\[
\map_{\al_\infty(\theta)}(\conj_{\theta,\infty}(x))=\conj_{\theta,\infty}(x+\rot)\,,\qquad
\forall \theta \in \Theta
\]
and $\conj_{\theta,\infty}$ also satisfies~\eqref{eq:normah}.

\bigskip
\emph{Measure of rotations:} As it was mentioned just after the statement of
Theorem~\ref{theo:KAM:L}, we can write
\[
\al_s(\theta)=\al_0(\theta)+\al_s(\theta)-\al_0(\theta)=\al_0(\theta)
+
\sum_{j=0}^{s-1}
\Delta_{\al_j}(\theta)\,.
\]
Then, reproducing the computation in~\eqref{eq:cond4:L},
we obtain
\[
\lip{\Theta}(\al_\infty) > \lip{\Theta}(\al_0) -
\sum_{j=0}^\infty \Lip{\Theta}(\Delta_{\al_{j}})
> 
\lip{\Theta}(\al_0)
- 
\frac{\CLipVI \Sigma_{\mu,\tau-1}}{\gamma \delta_0^{\tau+1}}
\cE_0
\,,
\]
so that the thesis $\TIV$ holds true by taking
\begin{equation}\label{eq:frakC2:L}
\mathfrak{C}_2^\sLip:=(a_3)^{\tau+1} \CLipVI \Sigma_{\mu,\tau-1} \,.
\end{equation}
This estimation of the Lipschitz constant from below is used to transport
the measure of the set $\Theta$ through the function $\theta \in \Theta \mapsto
\al_\infty(\theta) \in A$.
\end{proof}

\begin{remark}
Here we summarize how to compute constants
$\mathfrak{C}_1^\sLip$,
$\mathfrak{C}_2^\sLip$,
$\mathfrak{C}_2$,
$\mathfrak{C}_3$
of Theorem~\ref{theo:KAM:L}. Given fixed values
of the parameters
$\rho,\delta,\rho_\infty,\hat\rho$
and the distances
$\dist{\conj_\te(\bar \TT_\rho),\partial \TT_{\hat \rho}}$
and
$\dist{\al(\Theta),\partial A}$; the constants
$c_x$, $c_\al$, $c_{xx}$, $c_{x\al}$, $c_{\al\al}$,
$c_{xxx}$, $c_{xx\al}$, $c_{x\al\al}$, $c_{\al\al\al}$
in hypothesis $\GI$;
the constants $\gamma$ and $\tau$ in hypothesis $\GII$;
the constants $\sigma_1$, $\sigma_2$, $\sigma_3$ in hypothesis $\HI$;
the constant $\sigma_b$ in hypothesis $\HII$;
and the constants $\beta_0$, $\beta_1$, $\beta_2$ in hypothesis $\HIII$, are computed 
in the following order:
\begin{itemize}[leftmargin=5mm]
\item $a_1$, $a_2$, $a_3$ using~\eqref{eq:a1} and~\eqref{eq:a2a3}.
\item $C_1$, $C_2$, $C_3$ using~\eqref{eq:constC1}, \eqref{eq:constC2}
and~\eqref{eq:constC3}.
\item $C_0^\sLip$, $C_1^\sLip$, $C_2^\sLip$, $C_3^\sLip$, $C_4^\sLip$, $C_5^\sLip$
using~\eqref{eq:C0L}, 
\eqref{eq:C1L},
\eqref{eq:CLipII},
\eqref{eq:C2L},
\eqref{eq:ChL} and
\eqref{eq:C3L}.
\item $\kappa$, $\mu$ 
using~\eqref{eq:cond1} and~\eqref{eq:cond1:L}, and check that $\mu<1$ (abort the process otherwise).
\item
$\Sigma_{\kappa,\tau}$,
$\Sigma_{\kappa,2\tau}$,
$\Sigma_{\kappa,\tau-1}$,
$\Sigma_{\mu,-1}$,
$\Sigma_{\mu,-2}$,
$\Sigma_{\mu,\tau-1}$ using~\eqref{eq:mysum},
replacing $\norm{\cdot}_\rho$ by $\norm{\cdot}_{\Theta,\rho}$.
\item $C_4$ using~\eqref{eq:const4}.
\item $C_6^\sLip$, $C_7^\sLip$, $C_8^\sLip$, $C_9^\sLip$
using~\eqref{eq:CLieEE},  
\eqref{eq:C4L},  
\eqref{eq:C6L} and  
\eqref{eq:C9L}.
\item $\mathfrak{C}_2$,
$\mathfrak{C}_3$ using~\eqref{eq:frakC2C3}.
\item $\mathfrak{C}_1^\sLip$,
$\mathfrak{C}_2^\sLip$
using~\eqref{eq:frakC1:L} and~\eqref{eq:frakC2:L}.
\end{itemize}
\end{remark}

\section{On the verification of the hypotheses}\label{sec:hypo}

In this section we show a systematic approach,
tailored to be implemented in a computer-assisted proof,
to verify the assumptions of our a-posteriori theorems.
To do so, we perform an analytic study of the
hypotheses
with the goal of providing formulae that satisfy the following requirements: 
they are computable with a finite number operations, they give sharp bounds of the involved
estimates, and the computational time is fast:
all computations are performed with a complexity of order $N\log N$,
$N$ being the number of Fourier modes used to represent the conjugacies.
We focus in the hypotheses of Theorem~\ref{theo:KAM:L}, since the
discourse can be in fact simplified to deal with Theorem~\ref{theo:KAM}.

In Section~\ref{ssec:GIGII} we discuss the global hypotheses $\GI$ and $\GII$,
mainly how to obtain a suitable subset $\Theta$ of Diophantine numbers contained
in a given interval $B$ of rotation numbers, giving a sharp estimate
on the measure of $\Theta$.

In Section~\ref{ssec:HIHIIHIII} we discuss the hypotheses $\HI$, $\HII$ and
$\HIII$ which depend on the initial objects, which are taken as Fourier-Taylor polynomials.
Denoting
$\theta_0$ the center of the interval $B$,
we assume that $h_\theta(x)=h(x,\theta)$ is of the form
\begin{equation}\label{eq:hpol}
h(x,\theta) = x + \sum_{s = 0}^m h^{[s]}(x) (\theta-\theta_0)^s\,, \qquad
h^{[s]}(x) = \sum_{k=-N/2}^{N/2-1} \hat h^{[s]}_k \ee^{2\pi \ii k x}\,,
\end{equation}
where 
$\hat h_0^{[s]}=0$,
$\hat h^{[s]}_{-N/2}=0$ and $\hat h^{[s]}_k = (\hat h^{[s]}_{-k})^*$ otherwise,
and that $\al(\theta)$ is of the form
\begin{equation}\label{eq:alpol}
\al(\theta) = \sum_{s=0}^m \al^{[s]} (\theta-\theta_0)^s\,, \qquad \al^{[s]} \in \RR\,;
\end{equation}
for certain degree $m$ and $N=2^q$.
Notice that the symmetries in the Fourier coefficients
of $h$ have been selected so that the corresponding function is real-analytic
and satisfies the normalization condition
\[
\aver{h_\theta-\id}=0\,.
\]
We will see that the fact that these objects are chosen
to be polynomials plays an important role to obtain sharp values of the
constants $\sigma_1$, $\sigma_2$, $\sigma_3$, $\sigma_b$, $\beta_0$, $\beta_1$ and $\beta_2$.

In Section~\ref{ssec:error} we present the main result of this section, which allows us to obtain a
fine control of the norm of the error of conjugacy of the initial
family. This requires new tools since the corresponding error function,
denoted $e(x,\theta)$, is no longer a Fourier-Taylor polynomial, so we
combine the jet-propagation in the variable $\theta$ with the
control of the discrete Fourier approximation in the variable $x$.

For convenience, we briefly recall here some
standard notation used along this section, and an approximation theorem to control the error of the
discrete Fourier transform when approximating periodic functions.
Given a function $g:\TT \rightarrow \CC$, we consider its Fourier series
\[
g(x)= \sum_{k \in \ZZ} \hat g_k \ee^{2 \pi \ii k x} \,.
\]
Let us consider $N=2^q$ with $q\in \NN$, and the discretization $\{x_j\}$,
$x_j = j/N$, $0\leq j < N$, that defines a sampling $\{g_j\}$, with
$g_j=g(x_j)$.
Then, 
the discrete Fourier transform (DFT)
is 
\[
\tilde g_k = \frac{1}{N} \sum_{j=0}^{N-1} g_j \ee^{-2\pi \ii k x_j} =: \mathrm{DFT}_k(\{g_j\})
\]
and
the function $g(x)$ is approximated by the trigonometric polynomial
\[
\tilde g(x) := \sum_{k=-N/2}^{N/2-1} \tilde g_k \ee^{2\pi \ii k x}\,.
\]
We use the notation
\begin{align}
& \{\tilde g_k\} = \{\mathrm{DFT}_k(\{g_j\})\}\,, \label{eq:DFTF}\\
& \{g_j\} = \{\mathrm{DFT}_j^{-1}(\{\tilde g_k\})\}\,. \label{eq:DFTB}
\end{align}
Notice that formulae \eqref{eq:DFTF} and \eqref{eq:DFTB}
are exact 
if $g(x)$ is a trigonometric polynomial of degree 
at most $N$.
In this case, we write $\tilde g_k=\hat g_k$.

\begin{theorem}[See \cite{FiguerasHL17}]\label{theo:DFT}
Let $g:\TT_{\tilde\rho} \rightarrow \CC$ be an analytic and bounded function
in the complex strip $\TT_{\tilde\rho}$, with $\tilde\rho>0$. Let $\tilde g$ be
the DFT approximation of $g$ using $N$ nodes. Then
\begin{equation}\label{eq:prop:DFT1}
\abs{\tilde g_k - \hat g_k} \leq s_N(k,\tilde \rho) \norm{g}_{\tilde \rho}\,,
\end{equation}
\begin{equation}\label{eq:prop:DFT2}
\norm{\tilde g-g}_\rho \leq C_N(\rho,\tilde\rho) \norm{g}_{\tilde\rho}\,,
\end{equation}
for any $0\leq \rho \leq \tilde\rho$,
where
\begin{equation}\label{eq:sNrho}
s_{N}(k, \tilde\rho) :=  \frac{\ee^{-2\pi \tilde\rho N}}{1-\ee^{-2\pi \tilde\rho N}} \left( \ee^{2\pi \tilde\rho k}+ \ee^{-2\pi \tilde\rho k} \right)
\end{equation}
and
\begin{equation}\label{eq:CNrho}
C_{N}(\rho,  \tilde\rho)= S_ N^{1}(\rho, \tilde\rho) + S_ N^{2}(\rho, \tilde\rho)  + S_N^{3}(\rho, \tilde\rho)
\end{equation}
with
\begin{align*}
S_N^{1}(\rho, \tilde\rho) = {} &
 \frac{\ee^{-2\pi \tilde\rho  N} }{1-\ee^{-2\pi   \tilde\rho N }} \ 
 \frac{\ee^{-2\pi ( \tilde\rho+\rho)} + 1 }{\ee^{-2\pi ( \tilde\rho+\rho)} -1} 
 \left(1- \ \ee^{\pi( \tilde\rho+\rho)  N}\right) \,, \\
S_N^{2}(\rho, \tilde\rho) = {} & 
 \frac{\ee^{-2\pi \tilde\rho  N} }{1-\ee^{-2\pi   \tilde\rho  N }} \ 
  \frac{\ee^{2\pi ( \tilde\rho-\rho)} + 1 }{\ee^{2\pi ( \tilde\rho-\rho)} -1} 
  \left(1-\ \ee^{-\pi( \tilde\rho-\rho) N}\right) \,, \\
S_N^{3}(\rho, \tilde\rho) = {} & 
\frac{\ee^{2\pi ( \tilde\rho-\rho)} + 1}{\ee^{2\pi ( \tilde\rho-\rho)} -1} \ \ e^{-\pi( \tilde\rho-\rho)  N}\,.
\end{align*}
\end{theorem}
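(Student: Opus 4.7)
The plan is to prove Theorem~\ref{theo:DFT} by combining the classical aliasing identity for the discrete Fourier transform with Cauchy estimates on Fourier coefficients of analytic functions, and then matching the resulting geometric sums to the explicit formulas \eqref{eq:sNrho} and \eqref{eq:CNrho}.

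First I would derive the aliasing identity. Substituting the Fourier expansion $g(x)=\sum_{\ell\in\ZZ}\hat g_\ell\,\ee^{2\pi\ii\ell x}$ into the definition of $\tilde g_k$ and exchanging the orders of summation (justified by absolute convergence, since $g$ is bounded analytic on $\TT_{\tilde\rho}$), the orthogonality relation $\frac{1}{N}\sum_{j=0}^{N-1}\ee^{2\pi\ii(\ell-k)x_j}=\mathbf{1}_{\ell\equiv k\,(\mathrm{mod}\,N)}$ collapses the $j$-sum, yielding $\tilde g_k=\sum_{m\in\ZZ}\hat g_{k+mN}$ and hence $\tilde g_k-\hat g_k=\sum_{m\neq 0}\hat g_{k+mN}$. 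Second, by shifting the contour of integration for $\hat g_\ell=\int_0^1 g(x)\ee^{-2\pi\ii\ell x}\,dx$ to height $\pm\tilde\rho$ according to the sign of $\ell$, I obtain the Cauchy bound $|\hat g_\ell|\leq\ee^{-2\pi\tilde\rho|\ell|}\|g\|_{\tilde\rho}$.

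Third, for $-N/2\leq k\leq N/2-1$, I observe that $k+mN\geq k+N>0$ for $m\geq 1$ and $k+mN\leq k-N<0$ for $m\leq -1$, so $|k+mN|=k+mN$ or $|m|N-k$ respectively. Applying the Cauchy bound and summing the two geometric series in $m$,
\[
|\tilde g_k-\hat g_k|\leq\|g\|_{\tilde\rho}\bigl(\ee^{-2\pi\tilde\rho k}+\ee^{2\pi\tilde\rho k}\bigr)\sum_{m\geq 1}\ee^{-2\pi\tilde\rho mN},
\]
and the geometric sum equals $\ee^{-2\pi\tilde\rho N}/(1-\ee^{-2\pi\tilde\rho N})$, giving exactly $s_N(k,\tilde\rho)\|g\|_{\tilde\rho}$.

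Fourth, for the $\|\cdot\|_\rho$ bound, I split
\[
\tilde g(x)-g(x)=\sum_{k=-N/2}^{N/2-1}(\tilde g_k-\hat g_k)\ee^{2\pi\ii kx}-\sum_{k\notin[-N/2,N/2-1]}\hat g_k\ee^{2\pi\ii kx}.
\]
On $\TT_\rho$, the estimate $|\ee^{2\pi\ii kx}|\leq\ee^{2\pi\rho|k|}$ converts this into two geometric sums. The first sum, using the step-3 bound, decomposes into the contributions of positive and negative $k$, yielding finite geometric series of ratios $\ee^{2\pi(\tilde\rho+\rho)}$ (identifying $S_N^1$) and $\ee^{-2\pi(\tilde\rho-\rho)}$ (identifying $S_N^2$), each prefactored by $A=\ee^{-2\pi\tilde\rho N}/(1-\ee^{-2\pi\tilde\rho N})$. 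The truncation tail $\sum_{|k|\geq N/2}|\hat g_k|\ee^{2\pi\rho|k|}\leq\|g\|_{\tilde\rho}\sum_{|k|\geq N/2}\ee^{-2\pi(\tilde\rho-\rho)|k|}$ is a single geometric series whose sum, after minor bookkeeping of the endpoint $k=-N/2$, matches the expression $S_N^3$. Adding the three contributions reproduces \eqref{eq:CNrho}.

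The conceptual content is short; the real obstacle is the bookkeeping in step 4, where the asymmetry of the index window $[-N/2,N/2-1]$ forces one to track separately whether a given geometric series is indexed over $\{0,\dots,N/2-1\}$ or $\{1,\dots,N/2\}$, and to organise the resulting exponentials so that the prefactors $(\ee^{\pm 2\pi(\tilde\rho\pm\rho)}+1)/(\ee^{\pm 2\pi(\tilde\rho\pm\rho)}-1)$ and the remainders $1-\ee^{\pm\pi(\tilde\rho\pm\rho)N}$ appear in precisely the form given in the statement. Since $\rho\leq\tilde\rho$, the geometric series in $m$ and $k$ converge absolutely throughout, so no further analytic subtleties arise.
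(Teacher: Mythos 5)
Your argument is correct, and it is essentially the standard aliasing-plus-decay proof: the identity $\tilde g_k=\sum_{m\in\ZZ}\hat g_{k+mN}$, the Cauchy bound $|\hat g_\ell|\leq \ee^{-2\pi\tilde\rho|\ell|}\norm{g}_{\tilde\rho}$, and the geometric sums over the window $\{-N/2,\dots,N/2-1\}$ (positive and negative $k$ giving the prefactors $(\ee^{\pm2\pi(\tilde\rho\pm\rho)}+1)/(\ee^{\pm2\pi(\tilde\rho\pm\rho)}-1)$, and the tail indices $k\geq N/2$, $k\leq -N/2-1$ giving $S_N^3$) do reproduce $s_N(k,\tilde\rho)$ and $C_N(\rho,\tilde\rho)$ exactly as stated. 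Note that the present paper does not prove this result but imports it from \cite{FiguerasHL17}; your proof coincides with the argument given in that reference.
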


The following Fourier norm will be useful 
\[
\norm{g}_{\rho}^\cF
:= \sum_{k \in \ZZ} \abs{\hat g_k} \ee^{2\pi |k| \rho}\,,
\]
since it can be evaluated with a finite amount of computations if $g(x)$
is a trigonometric polynomial. Notice that 
$\|g\|_\rho\leq \|g\|^{\cF}_\rho$.

For convenience,
we use the suitable language of interval analysis, 
with the only aim of controlling truncation and discretization errors.
In particular, for any closed interval $Z \subset \RR$ we use the standard notation
\begin{equation}\label{eq:Intbound}
Z=[\underline{Z},\overline{Z}]\,, \qquad
\mathrm{rad}(Z)=\frac{\overline{Z}-\underline{Z}}{2}
\end{equation}
for the boundaries and the radius of an interval.
The error produced when evaluating the proposed expressions using a computer (with finite precision
arithmetics) is easily controlled performing computations with
interval arithmetics.

\DeclarePairedDelimiter\ceil{\lceil}{\rceil}
\DeclarePairedDelimiter\floor{\lfloor}{\rfloor}

\subsection{Controlling the global hypotheses $\GI$ and $\GII$} \label{ssec:GIGII}

Obtaining
the bounds on the derivatives of the map, $\GI$, is problem 
dependent and does not suppose a big challenge. If the map $f_\alpha$ 
is given in an explicit form then the bounds can be obtained
by hand, as we illustrate in Section~\ref{sec:example:Arnold} with an example.

The global hypothesis $\GII$ has to do with finding (positive measure) sets of Diophantine numbers in a closed
interval.

\begin{lemma}\label{sec:lemma:Dio}
Given an interval $B= [\underline{B},\overline{B}] \subset \RR$,
constants $\gamma<\frac12$ and $\tau>1$, and
$Q\in \NN$ such that $\frac{2}{Q}\leq \overline{B}-\underline{B}$, then 
the relative measure of $\Theta=B\cap \cD(\gamma,\tau)$ satisfies
\begin{equation}\label{eq:measT}
\frac{\meas(\Theta)}{\meas(B)}
\geq
1-\frac{4\gamma }{(\tau-1)Q^{\tau-1}}
-
\sum_{q=1}^Q
\sum_{\substack{p= \floor{\underline{B} q} \\ \mathrm{gcd}(p,q)=
1}}^{\ceil{\overline{B} q}}
\Delta(p,q)\,,
\end{equation}
where
\begin{equation}\label{eq:DeltaT}
\Delta(p,q) = 
\left\{
\begin{array}{ll}
\max\left( \frac{p}{q}+\frac{\gamma}{q^{\tau+1}} -\underline{B},0\right) & \mbox{if $p= \floor{\underline{B} q}$\,,} \\
\max\left( \overline{B}-\frac{p}{q}+\frac{\gamma}{q^{\tau+1}},0\right) & \mbox{if $p= \ceil{\overline{B} q}$\,,} \\
\min\left(\overline{B},\frac{p}{q}+\frac{\gamma}{q^{\tau+1}}\right) -
\max\left(\underline{B},\frac{p}{q}-\frac{\gamma}{q^{\tau+1}}\right)& \mbox{otherwise}\,.
\end{array}
\right.
\end{equation}
Here we use the notation $\ceil{\cdot}$ and $\floor{\cdot}$ for the ceil and floor functions, respectively.
\end{lemma}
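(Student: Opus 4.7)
The plan is to bound $\meas(B \setminus \Theta)$ from above by controlling the complementary (non-Diophantine) set, splitting the offending pairs $(p,q)$ into a ``head'' with $q \leq Q$ that is measured essentially exactly (this is what the $\Delta(p,q)$ terms encode) and a ``tail'' with $q > Q$ that is estimated by a standard integral comparison.

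First I would characterize the complement. By the definition of $\cD(\gamma,\tau)$, a point $\theta \in B$ fails to be Diophantine iff $|\theta - p/q| < \gamma q^{-\tau-1}$ for some $(p,q) \in \ZZ \times \NN_{>0}$. A quick reduction shows that it suffices to take $\gcd(p,q)=1$: if a non-coprime pair works, writing $q = dq'$, $p = dp'$ with $\gcd(p',q')=1$ gives
\[
|q'\theta - p'| < \gamma\, d^{-\tau-1} (q')^{-\tau} \leq \gamma (q')^{-\tau}.
\]
Therefore
\[
B \setminus \Theta \ \subset \ B \,\cap\, \bigcup_{q \geq 1} \bigcup_{\substack{p \in \ZZ \\ \gcd(p,q)=1}} I_{p,q}, \qquad I_{p,q} := \Bigl(\tfrac{p}{q} - \tfrac{\gamma}{q^{\tau+1}},\ \tfrac{p}{q} + \tfrac{\gamma}{q^{\tau+1}}\Bigr),
\]
and countable subadditivity of Lebesgue measure converts this into a double sum.

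Next I split at $q = Q$. For the \emph{head} $1 \leq q \leq Q$, since $\gamma < 1/2$ one has $\gamma/q^{\tau+1} < 1/(2q)$, so only integers $p$ with $\floor{\underline{B} q} \leq p \leq \ceil{\overline{B} q}$ can yield $I_{p,q} \cap B \neq \emptyset$. For interior $p$ in this range one has $I_{p,q} \subset B$ and $\meas(I_{p,q}) = 2\gamma/q^{\tau+1}$, which is exactly the third branch of \eqref{eq:DeltaT}. For the two boundary values $p = \floor{\underline{B}q}$ and $p = \ceil{\overline{B}q}$ the interval $I_{p,q}$ may stick out of $B$, and the first two branches of $\Delta(p,q)$ record precisely the length of $B \cap I_{p,q}$ (capped at zero when $I_{p,q}$ happens to miss $B$ altogether). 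This yields the explicit finite sum in \eqref{eq:measT}.

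For the \emph{tail} $q > Q$ I would drop coprimality and use the crude count that at most $q \meas(B) + 2$ values of $p$ can yield $I_{p,q} \cap B \neq \emptyset$ (again using $\gamma < 1/2$), each contributing at most $2\gamma/q^{\tau+1}$. Summation and comparison with an integral give
\[
\sum_{q > Q} \Bigl(\frac{2\gamma \meas(B)}{q^{\tau}} + \frac{4\gamma}{q^{\tau+1}}\Bigr) \leq \frac{2\gamma \meas(B)}{(\tau-1) Q^{\tau-1}} + \frac{4\gamma}{\tau Q^{\tau}}.
\]
The hypothesis $2/Q \leq \meas(B)$ absorbs the second term into the first, producing an upper bound of $4\gamma \meas(B)/((\tau-1) Q^{\tau-1})$. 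Dividing by $\meas(B)$ and combining with the head sum yields \eqref{eq:measT}. The only genuinely delicate point is the honest bookkeeping at the boundary integers $p = \floor{\underline B q}$ and $p = \ceil{\overline B q}$ that makes the head sum \emph{exact} rather than a crude $2\gamma/q^{\tau+1}$ estimate; the coprime reduction and the tail comparison are both routine.
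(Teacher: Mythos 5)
Your argument is, in substance, the paper's own proof: the same reduction of the complement to resonant intervals around coprime fractions, the same split at $q=Q$, the same bookkeeping of $\meas(I_{p,q}\cap B)$ through the three branches of $\Delta(p,q)$ for the head $q\le Q$, and the same tail estimate by integral comparison, with the hypothesis $2/Q\le \overline{B}-\underline{B}$ used exactly as in the paper to absorb the $\gamma/(\tau Q^{\tau})$ term into the $\gamma\,\meas(B)/((\tau-1)Q^{\tau-1})$ term. (A minor remark: for ``interior'' $p$ the inclusion $I_{p,q}\subset B$ need not hold, since $p/q$ may lie within $\gamma q^{-\tau-1}$ of an endpoint of $B$; this is harmless because the third branch of $\Delta(p,q)$ is in any case exactly $\meas(I_{p,q}\cap B)$, but you should not claim the head sum is ``exact'' — across different $q$ the resonant intervals can overlap, so it is an upper bound obtained by subadditivity.)

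The one step you should not wave through is the final division by $\meas(B)$. Your head estimate reads $\meas(\mathrm{Res}_{\le Q})\le \sum\Delta(p,q)$ with $\Delta(p,q)$ an \emph{absolute} length, so dividing by $\meas(B)$ produces the term $\sum\Delta(p,q)/\meas(B)$, which coincides with the term $\sum\Delta(p,q)$ appearing in \eqref{eq:measT} only when $\meas(B)\ge 1$; in the regime actually used later (e.g.\ $\mathrm{rad}(B)=2^{-14}$) your derived inequality is weaker than \eqref{eq:measT} as literally written, so the assertion that dividing ``yields \eqref{eq:measT}'' is a gap in the write-up. To be fair, the paper's own proof hides the same normalization: it bounds $\sum\meas(\mathrm{Res}_{p,q})$ by $(\overline{B}-\underline{B})\sum\Delta(p,q)$, which with $\Delta$ as defined in \eqref{eq:DeltaT} is only justified when $\overline{B}-\underline{B}\ge 1$. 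What your argument (and the paper's) actually establishes is \eqref{eq:measT} with the head sum normalized, i.e.\ with $\Delta(p,q)$ replaced by $\Delta(p,q)/\meas(B)$; state that explicitly instead of passing it off as the stated inequality.
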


\begin{proof}
Since
$\gamma<\frac12$, 
the
$(p,q)$-resonant sets
\[
\textrm{Res}_{p,q}(B,\gamma,\tau) =
\left\{
\theta \in B\,:\, |q \theta-p|< \frac{\gamma}{q^\tau}
\right\}\,,
\]
for 
each fixed $q>0$, are 
pairwise disjoint. Notice also that, for any $k\in \NN$, 
$\textrm{Res}_{kp,kq}(B,\gamma,\tau) \subset \textrm{Res}_{p,q}(B,\gamma,\tau)$, so that 
 that the full resonant set of type $(\gamma,\tau)$ is
\[
\textrm{Res}(B,\gamma,\tau) =
\bigcup_{\mathrm{gcd}(p,q)= 1} \textrm{Res}_{p,q}(B,\gamma,\tau) \,,
\]
and so, finding a lower bound of $\meas(\Theta)$
is equivalent to finding an upper bound of $\meas(\textrm{Res}(B,\gamma,\tau))$.

Given a fixed number $Q$,
we consider the disjoint union
\[
\textrm{Res}(B,\gamma,\tau)
= \textrm{Res}_{\leq Q}(B,\gamma,\tau)
    \cup
    \textrm{Res}_{>Q}(B,\gamma,\tau)
\]
where $\textrm{Res}_{\leq Q}(B,\gamma,\tau)$ and
$\textrm{Res}_{>Q}(B,\gamma,\tau)$ are, respectively, the sets of resonances
with denominator $q$ satisfying $q\leq Q$ and $q>Q$. The measure of the first
set is controlled as 
\begin{align*}
\meas (\textrm{Res}_{\leq Q}(B,\gamma,\tau)) \leq {} &  \sum_{q=1}^Q
\sum_{\substack{p= \floor{\underline{B} q} \\ \mathrm{gcd}(p,q)=
1}}^{\ceil{\overline{B} q}} \meas (\textrm{Res}_{p,q}(B,\gamma,\tau)) \\
\leq {} &
(\overline{B}-\underline{B})
\sum_{q=1}^Q
\sum_{\substack{p= \floor{\underline{B} q} \\ \mathrm{gcd}(p,q)=
1}}^{\ceil{\overline{B} q}} \Delta(p,q) \,,
\end{align*}
where $\Delta(p,q)$ is given in~\eqref{eq:DeltaT}.
The measure of the second set
is controlled as
\[
\begin{split}
\meas (\textrm{Res}_{>Q}(B,\gamma,\tau)) 
\leq & \sum_{q= Q+1}^\infty \frac{2\gamma}{q^{\tau+1}} (\ceil{\overline{B} q}-\floor{\underline{B} q})\\
\leq & \sum_{q= Q+1}^\infty \frac{2\gamma}{q^{\tau+1}} (\overline{B} q-\underline{B} q+2) \\
\leq & 2\gamma \left(\frac{\overline{B}-\underline{B}}{(\tau-1)Q^{\tau-1}} 
+ \frac{2}{\tau Q^\tau}  \right).
\end{split}
\]
Finally, since $\frac{2}{Q}\leq \overline{B}-\underline{B}$, we get the upper bound 
\[
\meas (\textrm{Res}_{>Q}(B,\gamma,\tau)) \leq  \frac{4\gamma (\overline{B}-\underline{B})}{(\tau-1)Q^{\tau-1}}.
\]
Then, the bound~\eqref{eq:measT} holds by combining both estimates.
\end{proof}

\begin{remark}\label{rem:meas:gbl}
When $B=[0,1]$ we have
$\Delta(p,q)=\frac{2 \gamma}{q^{\tau+1}}$ for every $p$.
Then, 
taking $Q\rightarrow \infty$
we obtain a Dirichlet series as a lower bound
\[
\meas(\Theta) = \frac{\meas(\Theta)}{\meas(B)}
\geq
1- 2\gamma \sum_{q=1}^\infty \frac{\phi(q)}{q^{\tau+1}}
= 1-2\gamma \frac{\zeta(\tau)}{\zeta(\tau+1)},  
\]
where $\phi$ is the Euler function and $\zeta$ is the Riemann zeta function.
\end{remark}

\subsection{Controlling the hypotheses $\HI$, $\HII$ and $\HIII$}\label{ssec:HIHIIHIII}

Given an interval $B$, centered at $\theta_0$,
we consider $h_\theta(x)=h(x,\theta)$ 
and $\al(\theta)$ given by~\eqref{eq:hpol}
and~\eqref{eq:alpol}, respectively.
In this section we present a procedure
to control the hypothesis $\HI$, $\HII$ and $\HIII$ corresponding to these objects.
To this end, it is convenient to introduce some notation to enclose
the dependence of the variable $\theta$.

\begin{definition}\label{def:enclo}
Given a function of the form
\[
F(x,\theta) = \sum_{s \geq 0} F^{[s]}(x) (\theta-\theta_0)^s\,, \qquad
F^{[s]}(x) = \sum_{k \in \ZZ} \hat F^{[s]}_k \ee^{2\pi \ii k x}\,,
\]
we introduce the enclosing interval function and its formal derivative as follows:
\begin{equation}\label{eq:EncloF}
F_B(x) := \sum_{k \in \ZZ} \hat F_{B,k} \ee^{2\pi \ii k x}\,,
\qquad
F_B'(x) := \sum_{k \in \ZZ} (2\pi \ii k) \hat F_{B,k} \ee^{2\pi \ii k x}\,,
\end{equation}
where $\hat F_{B,k}$ are given by
\[
\hat F_{B,k} := \sum_{s \geq 0} \hat F^{[s]}_k (B-\theta_0)^s
= \left\{
\sum_{s\geq 0} \hat F^{[s]}_k (\theta-\theta_0)^s\,:\,\theta \in B
\right\}\,.
\]
Abusing notation we write
\[
\norm{F_B}_{\rho} := 
\max_{x \in \TT_\rho} \overline{\abs{\sum_{k\in \ZZ} \hat F_{B,k} \ee^{2\pi \ii k x}}}
\,,
\qquad
\norm{F_B}_{\rho}^\cF := 
\overline{\sum_{k\in \ZZ} \abs{\hat F_{B,k}} \ee^{2\pi 
|k| \rho}}\,.
\]
\end{definition}

Using the enclosing interval function associated 
to a Fourier-Taylor polynomial of the form
\[
F(x,\theta) = \sum_{s = 0}^m F^{[s]}(x) (\theta-\theta_0)^s\,, \qquad
F^{[s]}(x) = \sum_{k =-N/2}^{N/2-1} \hat F^{[s]}_k \ee^{2\pi \ii k x}\,,
\]
we introduce the following notation:
\[
\mathfrak{M}_{B,\rho}(F) := \max_{j=0,\ldots,N-1} \left\{\Abs{\overline{\mathrm{DFT}^{-1}_j(\{ \hat F_{B,k} \ee^{2\pi \ii k\rho}\})}}\right\} 
+ \frac{1}{2N} {\norm{F_B'}_{\rho}^\cF}\,.
\]
and
\[
\mathfrak{m}_{B,\rho}(F) := \min_{j=0,\ldots,N-1} \left\{\Abs{\underline{\mathrm{DFT}^{-1}_j(\{ \hat F_{B,k} \ee^{2\pi \ii k\rho}\})}}\right\} 
- \frac{1}{2N} {\norm{F_B'}_{\rho}^\cF}\,.
\]

\begin{proposition}\label{prop:H1check}
Consider the setting of Theorem~\ref{theo:KAM:L}, taking
$h(x,\theta)$
and
$\al(\theta)$ 
of the form~\eqref{eq:hpol}
and~\eqref{eq:alpol}, respectively, and
assume that
\begin{equation}\label{eq:Lem:Hyp}
\hat \rho > \rho + \mathfrak{M}_{B,\rho}(h-\id)\,,
\quad
\mathfrak{M}_{B,\rho}(\pd_x h-1)<1\,,
\quad
\mathfrak{m}_{B,\rho}(\pd_x h)> 0\,.
\end{equation}
Then:
\begin{enumerate}
\item Hypothesis $\HI$ holds by taking 
\begin{align*}
\sigma_1 > {} & \mathfrak{M}_{B,\rho}(\pd_x     h) ,\\
\sigma_2 > {} & 1/\mathfrak{m}_{B,\rho}(\pd_x   h) ,\\
\sigma_3 > {} & \mathfrak{M}_{B,\rho}(\pd_{xx}  h)\,.
\end{align*}
\item 
Consider the function
\[
b(x,\theta)=\frac{\pd_\al f_{\al(\theta)} (h(x,\theta))}{\pd_x h (x+\theta,\theta)}\,,
\]
and let $\{b_B(x_j)\}$ be the corresponding enclosing function 
evaluated in the grid $x_j=j/N$. Assume that
\begin{equation}\label{eq:hyp_bt}
c_b := \frac{s_N(0,\rho)
c_\al\overline{\abs{1/\widetilde{b}_{B,0}}}}{\mathfrak{m}_{B,\rho}(\pd_x
h)} < 1\,,
\end{equation}
where $s_N(0,\rho)$ is given in~\eqref{eq:sNrho} and
\[
\widetilde{b}_{B,0} = \frac{1}{N} \sum_{j=0}^{N-1} b_B(x_j)\,.
\]
Then, hypothesis $\HII$ holds by taking 
\[
\sigma_b > \frac{\overline{\abs{1/\widetilde{b}_{B,0}}}}{1-c_b}\,.
\]
\item Assume that 
the interval 
$\al'(B)$
does not
contain $0$.
Then, hypothesis $\HIII$ holds by taking
\begin{align*}
\beta_0 > {} & \mathfrak{M}_{B,\rho}(\pd_\theta h)\,, \\
\beta_1 > {} & \mathfrak{M}_{B,\rho}(\pd_{x\theta} h)\,, \\
\beta_2 > {} & \overline{\al'(B)}\,.
\end{align*}
Furthermore, we observe that 
$\underline{\al'(B)} < \lip{\Theta}(\al)$.
\end{enumerate}
\end{proposition}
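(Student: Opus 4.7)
The plan is to verify each of the three conclusions by reducing a supremum over the continuous set $\Theta\times\TT_\rho$ to a finite computation. Since $\Theta\subset B$, it suffices to work on the enclosing interval $B$; in the parameter $\theta$ this is done via the interval enclosing function $F_B$ of Definition~\ref{def:enclo}, which by construction contains $F(x,\theta)$ for every $\theta\in B$, and in the spatial variable $x$ this is done by evaluating the trigonometric polynomial $F_B$ on the grid $x_j=j/N$ shifted to the strip boundary and adding the Bernstein-type tail $\frac{1}{2N}\norm{F_B'}_{\rho}^\cF$ to cover the gaps between nodes. This is exactly what $\mathfrak{M}_{B,\rho}$ computes from above and $\mathfrak{m}_{B,\rho}$ from below.

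With this device in hand, part (1) is essentially mechanical: applying the recipe to $F=\pd_x h$ and $F=\pd_{xx}h$ directly yields the bounds $\sigh>\mathfrak{M}_{B,\rho}(\pd_x h)$ and $\sighxx>\mathfrak{M}_{B,\rho}(\pd_{xx}h)$; the assumption $\mathfrak{m}_{B,\rho}(\pd_x h)>0$ keeps $|\pd_x h|$ strictly away from zero on $\TT_\rho\times B$, which both yields $\sighi>1/\mathfrak{m}_{B,\rho}(\pd_x h)$ by inversion and, together with $\mathfrak{M}_{B,\rho}(\pd_x h-1)<1$, certifies that $h_\theta\in\Difeo{\rho}$. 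The distance condition $\dist{h_\theta(\bar\TT_\rho),\pd\TT_{\hat\rho}}>0$ follows by writing $\im h_\theta(x)=\im x+\im(h_\theta(x)-x)$ and combining $|\im x|\le\rho$ with $\norm{h-\id}_{B,\rho}\le\mathfrak{M}_{B,\rho}(h-\id)$, which is precisely the first hypothesis in~\eqref{eq:Lem:Hyp}; the normalization $\aver{h_\theta-\id}=0$ is already built into the ansatz~\eqref{eq:hpol}. Part~(3) is analogous, via the mean value theorem in $\theta$: the smooth dependence of $h$ and $\al$ on $\theta$ gives $\Lip{\Theta,\rho}(h-\id)\le\norm{\pd_\theta h}_{B,\rho}\le\mathfrak{M}_{B,\rho}(\pd_\theta h)$ and the analogous bound for $\pd_x h$ and $\al$; the lower Lipschitz inequality $\underline{\al'(B)}<\lip{\Theta}(\al)$ uses the sign hypothesis on $\al'(B)$ to force strict monotonicity of $\al$, so that $\inf_B|\al'|$ lower-bounds $\lip{\Theta}(\al)$.

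Part~(2) is the most delicate and is where the main obstacle lies, because $b(x,\theta)$ is not a Fourier-Taylor polynomial: it contains both a composition with $f_\al$ and a quotient by $\pd_x h$, so $\mathfrak{M}$ cannot be applied directly. The plan is to bound $\aver{b_\theta}$ away from zero by a Neumann-series inversion around its DFT approximation $\widetilde b_0(\theta)=\frac{1}{N}\sum_j b(x_j,\theta)$. Theorem~\ref{theo:DFT} controls the aliasing error $|\hat b_0(\theta)-\widetilde b_0(\theta)|\le s_N(0,\rho)\norm{b_\theta}_\rho$, and part~(1) together with $\GI$ gives $\norm{b_\theta}_\rho\le c_\al/\mathfrak{m}_{B,\rho}(\pd_x h)$. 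Replacing $\widetilde b_0(\theta)$ by its interval enclosure $\widetilde b_{B,0}=\frac{1}{N}\sum_j b_B(x_j)$, which is valid for every $\theta\in B$ by interval arithmetic, the assumption $c_b<1$ permits the Neumann inversion
\[
\bigl|1/\aver{b_\theta}\bigr|\le\frac{|1/\widetilde b_0(\theta)|}{1-c_b}\le\frac{\overline{|1/\widetilde b_{B,0}|}}{1-c_b},
\]
which is exactly the claimed bound for $\sighb$. The chief difficulty here is not conceptual but implementational: producing a sufficiently tight enclosure $b_B(x_j)$ by propagating the interval extension through the composition $\pd_\al f_{\al(\theta)}(h(x_j,\theta))$ and the quotient by $\pd_x h(x_j+\theta,\theta)$ so that $c_b<1$ can actually be verified in concrete examples—all the other estimates are straightforward applications of the tools already assembled in this section.
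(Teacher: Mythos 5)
Your overall route is the same as the paper's: enclose the $\theta$-dependence through $F_B$, evaluate on the grid shifted to the boundary of the strip with the $\tfrac{1}{2N}\norm{F_B'}_{\rho}^\cF$ tail, control $\sigma_b$ by the DFT aliasing bound of Theorem~\ref{theo:DFT} followed by a Neumann-type inversion, and bound the Lipschitz constants by $\theta$-derivatives on $B$ (with the sign condition on $\al'(B)$ for the bound from below). Parts (2) and (3) as you present them match the paper's proof.

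There is, however, a genuine gap in your part (1), precisely at the step that produces $\sigma_2$. The quantities $\mathfrak{M}_{B,\rho}$ and $\mathfrak{m}_{B,\rho}$ are computed only on the line $\{\im x=\rho\}$ (each coefficient $\hat F_{B,k}$ is multiplied by $\ee^{2\pi\ii k\rho}$), so $\mathfrak{m}_{B,\rho}(\pd_x h)>0$ only gives a lower bound for $|\pd_x h|$ on the boundary of the strip; it does not by itself ``keep $|\pd_x h|$ strictly away from zero on $\TT_\rho\times B$'' as you assert --- an analytic function can be bounded away from zero on $\im x=\pm\rho$ and still vanish at interior (for instance real) points, in which case $\norm{1/\pd_x h}_{B,\rho}=\infty$ and the claimed bound $\sigma_2>1/\mathfrak{m}_{B,\rho}(\pd_x h)$ fails. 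The correct chain, which is how the paper argues, is: by the maximum modulus principle (together with real-analyticity, which is why one boundary component suffices), the hypothesis $\mathfrak{M}_{B,\rho}(\pd_x h-1)<1$ forces $|\pd_x h-1|<1$ on all of $\bar\TT_\rho\times B$, hence $\pd_x h\neq 0$ in the interior; only after this non-vanishing is established can the \emph{minimum} modulus principle be applied to transfer the boundary bound $\mathfrak{m}_{B,\rho}(\pd_x h)$ to the whole strip and conclude $\sigma_2<\infty$. In your write-up the hypothesis $\mathfrak{M}_{B,\rho}(\pd_x h-1)<1$ is instead spent on ``certifying $h_\theta\in\Difeo{\rho}$'', so the key non-vanishing step is left unsupported (and, relatedly, your sup bounds tacitly use the maximum modulus principle behind the phrase ``shifted to the strip boundary''; this should be said explicitly, since it is the whole content of reducing the strip to the grid). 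The repair is immediate because both conditions in~\eqref{eq:Lem:Hyp} are assumed, but as written the inference for $\sigma_2$ is invalid.
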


\begin{remark}
Since $h(x,\theta)$ is a Fourier-Taylor polynomial, we could directly use the Fourier norm
to produce
\[
\sigma_1 > 1+\sum_{s=0}^m \norm{\pd_x h^{[s]}}_\rho^\cF \ \mathrm{rad}(B)^s\,,
\qquad
\sigma_3 > \sum_{s=0}^m \norm{\pd_{xx} h^{[s]}}_\rho^\cF \ \mathrm{rad}(B)^s\,.
\]
This approach not only produces substantial overestimation (which is propagated in the
KAM constants $\mathfrak{C}_1$, $\mathfrak{C}_2$, etc), but does not give information
to control $\sigma_2$. Notice that the overestimation (produced by the Fourier norm)
of the derivative $F_B'$ in
$\mathfrak{M}_{B,\rho}(F)$ and $\mathfrak{m}_{B,\rho}(F)$
is mitigated both by the factor $1/2N$ and by enclosing the dependence of $\theta$
(since cancellations are taken into account).
\end{remark}

\begin{proof}
We first observe that, since $h(x,\theta)$ is a Fourier-Taylor polynomial,
it is indeed analytic in $B \supset \Theta$,
so we consider the bounds
\[
\norm{\pd_x h}_{\Theta,\rho} \leq \norm{\pd_x h}_{B,\rho}\,,\quad
\norm{1/\pd_x h}_{\Theta,\rho} \leq \norm{1/\pd_x h}_{B,\rho}\,,\quad
\norm{\pd_{xx} h}_{\Theta,\rho} \leq \norm{\pd_{xx} h}_{B,\rho}\,.
\]
To control $\norm{\pd_x h}_{B,\rho}$ and $\norm{\pd_{xx} h}_{B,\rho}$, we use
the maximum 
modulus principle for analytic functions. By hypothesis, the functions are real-analytic,
so it suffices to consider
one component of the boundary, say
$\{\im(x)=\rho\}$.

Using the enclosing operation in Definition~\ref{def:enclo} we 
reduce
the discussion to manipulate formal Fourier series
$\pd_x h_B(x)$ and $\pd_{xx} h_B(x)$,
with intervalar coefficients, that include the dependence of the variable $\theta$.
Hence, to estimate the maximum of a function $F(x,\theta)$ in $\{\im(x)=\rho\} \times B$ we
construct the bound $\mathfrak{M}_{B,\rho}(F)$
as follows:
\begin{itemize}[leftmargin=5mm]
\item The restriction of the enclosing function $F_B(x)$
to the boundary is obtained by multiplying each $k^\mathrm{th}$ Fourier coefficient by
$\ee^{2\pi \ii k\rho}$.
\item The evaluation of $F_B(x + \ii \rho)$ in the uniform grid of $N$ intervals
is performed using \textrm{DFT}.
\item The maximum of $F_B(x + \ii \rho)$ in each interval of length $1/N$, centered at the
grid points, is bounded above by the value of the function at the grid plus
a global bound of the derivative.
\item The bound of the mentioned derivative is obtained using the immediate inequality
\[
\norm{F_B'}_{\rho} 
\leq \norm{F_B'}_{\rho}^\cF
\,.
\]
\end{itemize}
The above discussion allows us to control $\sigma_1$ and $\sigma_3$.

To control $\sigma_2$ we perform an analogous argument for the minimum
modulus principle. Notice that the condition
$\mathfrak{M}_{B,\rho}(\pd_xh-\id)<1$ ensures that the function $\pd_x h(x,\theta)$ is non-zero at all
points in $\TT_\rho \times B$. The fact that $\mathfrak{m}_{B,\rho}(\pd_x h)> 0$
ensures that we can take $\sigma_2<\infty$.

As the last condition in hypothesis $\HI$, we must see that for every $\theta-\theta_0 \in B$ the map $h(x,\theta)$ satisfies
\begin{equation}\label{eq:disp:prop}
\dist{h(\bar \TT_\rho,\theta),\pd \TT_{\hat\rho}} > 0\,.
\end{equation}
To this end, we compute
\[
\max_{x\in \bar \TT_\rho} \max_{\theta \in B}
\abs{\im (h(x,\theta))} \leq \rho + \norm{h-\id}_{B,\rho} 
\leq \rho + \mathfrak{M}_{B,\rho}(h-\id)
\]
and the inequality~\eqref{eq:disp:prop} holds from the first assumption in~\eqref{eq:Lem:Hyp}.
This completes item (1).

Regarding item (2),
let us recall that we are interested in controlling
$
\norm{1/\aver{b}}_{\Theta}
$
where $\aver{b}(\theta)$ is the actual average with respect
to $x$. Notice that
\[
\aver{b}(\theta) = \sum_{s \geq 0}
\aver{b^{[s]}} (\theta-\theta_0)^s
\in
\int_0^1 b_B(x) \dif x =: \aver{b_B}
\]
and so
\[
\norm{1/\aver{b}}_{\Theta} \leq \overline{\abs{1/\aver{b_B}}}\,.
\]
Using the notation in the statement, and Theorem~\ref{theo:DFT}, we have
\[
\overline{\Abs{\widetilde{b}_{B,0}-\aver{b_B}}}
\leq s_N(0,\rho) \norm{b}_{B,\rho} \leq s_N(0,\rho)\norm{f_\al}_{A,\hat\rho}\norm{1/\pd_x h}_{B,\rho}
\leq \frac{s_N(0,\rho)\ c_\al}{\mathfrak{m}_{B,\rho}(\pd_x h)}\,.
\]
Then, we compute
\[
\overline{\abs{1/\aver{b_B}}} \leq
\frac{\overline{\Abs{1/\widetilde{b}_{B,0}}}}
{1-  \overline{\Abs{1/\widetilde{b}_{B,0}}}
     \overline{\Abs{\widetilde{b}_{B,0}-\aver{b_B}}}}
\leq
\frac{\overline{\Abs{1/\widetilde{b}_{B,0}}}}
{1- c_b}\,,
\]
where we used~\eqref{eq:hyp_bt}.

Finally, item (3) follows reproducing the argument for item (1), but controlling
the Lipschitz norms in terms of the norm of the corresponding derivative
with respect to $\theta$.
\end{proof}

\subsection{Controlling the error of conjugacy}\label{ssec:error}

Given an interval $B$, centered at $\theta_0$,
we consider again $h_\theta(x)=h(x,\theta)$ 
and $\al(\theta)$ given by~\eqref{eq:hpol}
and~\eqref{eq:alpol}, respectively.
In this section we propose suitable (sharp and computable) estimates to
control the norm $\norm{\cdot}_{\Theta,\rho}$ of the Fourier-Taylor series
\begin{equation}\label{eq:error:Lip}
e(x,\theta) 
= \sum_{s\geq 0} e^{[s]}(x) (\theta-\theta_0)^s
:= f_{\al(\theta)}(h(x,\theta)) - h(x+\theta,\theta)\,.
\end{equation}
Notice that we have to compose $f_\al(x)$ with the objects 
$h(x,\theta)$ and $\al(\theta)$ given by~\eqref{eq:hpol}
and~\eqref{eq:alpol}.
Assuming that the family $f_\al(x)=f(x,\al)$ is $C^\infty$ in $\al$, 
we can express the composition as follows
\begin{equation}\label{eq:fps}
F(x,\theta)=f(h(x, \theta),\al(\theta)) = 
x+\sum_{s\geq 0} 
F^{[s]}(x) (\theta-\theta_0)^s\,,
\end{equation}
where 
\[
F^{[0]}(x)=\mathfrak{F}_0(x+h^{[0]}(x),\al^{[0]})=f_{\al^{[0]}}(x+h^{[0]}(x))-x\,,
\]
and the remaining coefficients, for $s\geq 1$,
are given by recurrence formulae
\begin{equation}\label{eq:recurrences}
\begin{split}
F^{[s]}(x) = \mathfrak{F}_s\Big(&x+h^{[0]}(x),\al^{[0]};
                                h^{[1]}(x),\ldots,h^{[s]}(x),
                                \al^{[1]},\ldots,\al^{[s]},\\
                                &F^{[1]}(x),\ldots,F^{[s-1]}(x)\Big)\,.
\end{split}
\end{equation}
Notice that the recurrences $\mathfrak{F}_s$ are explicit in terms of
Fa\`a di Bruno formulae or, if the function $f_{\al}(x)$ is elementary, using 
Automatic Differentiation rules (see~\cite{Knuth97}). In particular,
formula~\eqref{eq:recurrences} depends polynomially with respect to
$h^{[1]}(x), \ldots, h^{[s]}(x)$,
$\al^{[1]}, \ldots, \al^{[s]}$,
$F^{[1]}(x), \ldots, F^{[s-1]}(x)$.

Furthermore, a natural way to enclose the power series~\eqref{eq:fps}
is the truncated Taylor model (recall that $m$ is the fixed
order in $\theta-\theta_0$ of the initial objects $h(x,\theta)$ and $\al(\theta)$)
\[
F(x,\theta) \in x+\sum_{s=0}^m F^{[s]}(x)
(\theta-\theta_0)^s + F^{[m+1]}_B(x) [-1,1]\ \mathrm{rad}(B)^{m+1}\,,
\]
where $F^{[m+1]}_B(x)$ is obtained evaluating the same recurrences
\begin{equation}\label{eq:recurrences:mp1}
\begin{split}
F_B^{[s]}(x) = \mathfrak{F}_s\Big(&
x+h_B^{[0]}(x),\al_B^{[0]};
h_B^{[1]}(x),\ldots,h_B^{[s]}(x),
\al_B^{[1]},\ldots,\al_B^{[s]},\\
&F_B^{[1]}(x),\ldots,F_B^{[s-1]}(x)\Big)\,,
\end{split}
\end{equation}
for $1\leq s\leq m+1$,
with the fattened 
objects 
\begin{equation}\label{eq:fattenedhal}
h_B^{[s]}(x) =\frac{1}{s!} \left\{\frac{\pd^s h(x, \theta)}{\pd
\theta^s}:\theta\in B\right\} \,,
\qquad
\al_B^{[s]} =\frac{1}{s!} \left\{\frac{\dif^s \al(\theta)}{\dif
\theta^s}:\theta\in B\right\} \,.
\end{equation}
See \cite{Tucker11} for further details.

In the following theorem we propose an explicit 
estimate for the norm of
the error \eqref{eq:error:Lip} using
the above 
idea. 
A major obstacle is that the space of
trigonometric polynomials of degree at most $N$
is not an algebra. This is overcome by combining
recurrences \eqref{eq:recurrences}
and~\eqref{eq:recurrences:mp1}
with control on the discretization
error in Fourier space. Thus, we obtain an additional source of error
that, remarkably, is estimated using recursive formulae that depend
only on the family $f_\al(x)$. 

\begin{theorem}\label{theo:Finite}
Consider the setting of Theorem~\ref{theo:KAM:L}, taking
$h(x,\theta)$ and $\al(\theta)$ of the form~\eqref{eq:hpol}
and~\eqref{eq:alpol}, respectively, and
assume that, given $r>\hat\rho$, we have
$f_{\al}(x) \in \Anal(\TT_{r})$ and the maps are $C^{m+1}$ with respect to $\al$.
Then, for any 
$\tilde \rho > \rho$ such that
\begin{equation}\label{eq:condError}
r> \tilde \rho + 
\mathfrak{M}_{\theta_0,\tilde\rho}(h^{[0]})
\,,
\end{equation}
the error \eqref{eq:error:Lip} satisfies
\begin{align*}
\norm{e}_{\Theta,\rho} \leq {} & 
\sum_{s=0}^m \norm{\widetilde{e}^{[s]}}_{\rho}^\cF \ \mathrm{rad}(B)^s
+ C_T \ \mathrm{rad}(B)^{m+1}
+ C_F \ C_N(\rho,\tilde\rho)\,,\\
\Lip{\Theta,\rho}(e) \leq {} & 
\sum_{s=1}^{m} s\norm{\widetilde{e}^{[s]}}_{\rho}^\cF \ \mathrm{rad}(B)^{s-1}
+ (m+1) C_T \ \mathrm{rad}(B)^{m}
+ C_F' \ C_N(\rho,\tilde\rho)\,,
\end {align*}
where $\widetilde{e}^{[s]}(x)$ is the discrete Fourier approximation
given by
\begin{equation}\label{eq:DFTe}
\widetilde{e}^{[s]}_k = \mathrm{DFT}_k(\{e^{[s]}(x_j)\})\,,
\end{equation}
with $x_j=j/N$, 
and the computable constants $C_T$, $C_F$, $C_F'$ that depend
on $m,B,\rho,\tilde\rho$ and the initial objects.
\end{theorem}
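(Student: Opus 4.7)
The plan is to expand $e(x,\theta)$ as a Taylor series in $\theta-\theta_0$, then control the resulting coefficients by combining the recurrences \eqref{eq:recurrences}--\eqref{eq:recurrences:mp1} with the DFT approximation result of Theorem~\ref{theo:DFT}. Both summands in the definition of $e$ admit such an expansion: the composition $F(x,\theta)=f_{\alpha(\theta)}(h(x,\theta))$ via the recurrences \eqref{eq:recurrences}, and the shifted conjugacy $h(x+\theta,\theta)$ by expanding $h^{[s]}(x+\theta)$ and the factor $\theta$ around $\theta_0$; subtracting gives $e(x,\theta)=\sum_{s\geq 0}e^{[s]}(x)(\theta-\theta_0)^s$. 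Hypothesis \eqref{eq:condError} is what guarantees that $h(\bar\TT_{\tilde\rho},\theta)\subset \bar\TT_r$ for every $\theta\in B$, so $f_{\alpha(\theta)}\circ h(\cdot,\theta)$ is analytic on $\TT_{\tilde\rho}\times B$ and each $e^{[s]}\in\Per{\tilde\rho}$; the series therefore converges on $\TT_{\tilde\rho}\times B$.

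For the norm estimate, I would decompose
\[
e(x,\theta)=\sum_{s=0}^m e^{[s]}(x)(\theta-\theta_0)^s + R_m(x,\theta),
\]
where $R_m$ is the Taylor remainder of order $m+1$. Each $e^{[s]}$ with $s\leq m$ is \emph{not} a trigonometric polynomial, but can be replaced by its DFT truncation $\widetilde e^{[s]}$ from \eqref{eq:DFTe}, and then Theorem~\ref{theo:DFT} gives $\|e^{[s]}-\widetilde e^{[s]}\|_\rho\leq C_N(\rho,\tilde\rho)\,\|e^{[s]}\|_{\tilde\rho}$, while $\|\widetilde e^{[s]}\|_\rho\leq\|\widetilde e^{[s]}\|_\rho^{\cF}$ is a finite, computable Fourier norm. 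Summing these bounds over $s=0,\ldots,m$ weighted by $\mathrm{rad}(B)^s$ gives the first two polynomial sums on the right of the stated inequality, and defines a computable constant $C_F$ as an upper bound for $\sum_{s=0}^m\|e^{[s]}\|_{\tilde\rho}\,\mathrm{rad}(B)^s$. The remainder $R_m$ is enclosed using the fattened recurrence \eqref{eq:recurrences:mp1} evaluated on the interval-valued objects \eqref{eq:fattenedhal}: standard Taylor model arithmetic yields $\|R_m\|_{B,\rho}\leq \|e^{[m+1]}_B\|_\rho\,\mathrm{rad}(B)^{m+1}$, and $C_T$ is any computable upper bound on $\|e^{[m+1]}_B\|_\rho$.

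For the Lipschitz bound I would apply the mean value theorem,
\[
\frac{e(x,\theta_2)-e(x,\theta_1)}{\theta_2-\theta_1}=\int_0^1 \partial_\theta e\bigl(x,\theta_1+t(\theta_2-\theta_1)\bigr)\,\dif t,
\]
so that $\Lip{\Theta,\rho}(e)\leq \|\partial_\theta e\|_{B,\rho}$. Term-by-term differentiation of the Taylor decomposition replaces $(\theta-\theta_0)^s$ by $s(\theta-\theta_0)^{s-1}$, producing the weights $s\,\mathrm{rad}(B)^{s-1}$ and the prefactor $(m+1)\mathrm{rad}(B)^m$ on the remainder; the same DFT truncation argument applied to $\partial_\theta e$ produces the new discretization constant $C_F'$, bounding $\sum_{s=1}^m s\,\|e^{[s]}\|_{\tilde\rho}\,\mathrm{rad}(B)^{s-1}$ plus the contribution of the fattened $\partial_\theta e^{[m+1]}_B$. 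Note that this derivative is itself evaluated symbolically from the same recurrences applied to the derivatives of $h^{[s]}(x+\theta)(\theta-\theta_0)^s$, so no new machinery is needed.

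The principal obstacle is \emph{bookkeeping}: the space of trigonometric polynomials of degree $\leq N$ is not closed under composition or division, so the DFT truncation error from Theorem~\ref{theo:DFT} has to be carefully propagated through the nonlinear recurrences \eqref{eq:recurrences} (for the finite expansion) and \eqref{eq:recurrences:mp1} (for the Taylor remainder). In particular, each constant $C_T$, $C_F$, $C_F'$ is obtained as an explicit finite combination of Fourier norms $\|\cdot\|_{\tilde\rho}^{\cF}$ of the objects $h^{[s]}$, $\alpha^{[s]}$ and their interval-enclosed versions, and the estimate~\eqref{eq:condError} must be used to guarantee analyticity at every stage. All intermediate bounds are finitely computable via the Fa\`a di Bruno or automatic differentiation form of $\mathfrak{F}_s$, and may be evaluated with FFT-based interval arithmetic at cost $O(N\log N)$, in line with the goals of Section~\ref{sec:hypo}.
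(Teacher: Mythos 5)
Your proposal is correct and follows essentially the same route as the paper: Fourier--Taylor splitting at order $m$, control of the truncated coefficients via their DFT approximations and Theorem~\ref{theo:DFT}, fattened (interval) recurrences for the order-$(m+1)$ Taylor remainder, and $\Lip{\Theta,\rho}(e)\leq\norm{\partial_\theta e}_{B,\rho}$ with term-by-term differentiation for the Lipschitz bound. The only difference is bookkeeping of the constants: the paper applies the DFT error estimate only to the coefficients $F^{[s]}$ of $f_{\al(\theta)}(h(x,\theta))$ (the shifted-conjugacy coefficients $H^{[s]}$ being exact trigonometric polynomials), bounds $\norm{F^{[s]}}_{\tilde\rho}$ through explicit majorant recurrences $\mathfrak{G}_s$, and takes $C_T=\norm{\tilde e_B^{[m+1]}}_\rho^{\cF}$ while sending the remainder's discretization error into $C_F$, which yields slightly sharper computable constants than your direct bounds on $\norm{e^{[s]}}_{\tilde\rho}$ and $\norm{e_B^{[m+1]}}_\rho$.
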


\begin{remark}
Note that the functions 
$\widetilde{e}^{[s]}(x)$, for $0\leq s \leq m$, are expected
to be small if the candidates $h(x,\theta)$ and
$\al(\theta)$ are good enough approximations of the Lindstedt series at $\theta_0$.
Moreover, the constant $C_N(\rho,\tilde\rho) =O(\ee^{-\pi N(\tilde\rho-\rho)})$
can be taken very small for a suitable choice of $\tilde\rho$. Hence,
the limiting factor of the estimate is the term $C_T \ \mathrm{rad}(B)^{m+1}$ (see~\eqref{eq:finalstep2}).
\end{remark}

\begin{proof}
The error \eqref{eq:error:Lip} can be enclosed as
\[
e(x,\theta) \in \sum_{s=0}^{m} e^{[s]}(x) (\theta-\theta_0)^s 
+ e_{B}^{[m+1]}(x) [-1,1]\ \mathrm{rad}(B)^{m+1}\,,
\]
which yields the control
\[
\norm{e}_{\Theta,\rho} \leq \sum_{s=0}^m \norm{e^{[s]}}_\rho \ \mathrm{rad}(B)^s
+ 
\Norm{e_{B}^{[m+1]}}_\rho
\ \mathrm{rad}(B)^{m+1}\,.
\]
Since the functions $e^{[s]}(x)$ and $e_{B}^{[m+1]}(x)$ have
infinitely many harmonics, we approximate them using suitable
trigonometric polynomials $\tilde e^{[s]}(x)$ (\emph{Step 1}) and
$\tilde e_{B}^{[m+1]}(x)$ (\emph{Step 2}), thus obtaining the bound
\begin{align*}
\norm{e}_{\Theta,\rho} \leq {} & 
\sum_{s=0}^m \norm{\tilde e^{[s]}}_\rho \ \mathrm{rad}(B)^s + 
\Norm{\tilde e_{B}^{[m+1]}}_\rho \ \mathrm{rad}(B)^{m+1} \\
&+\sum_{s=0}^m \norm{e^{[s]}-\tilde e^{[s]}}_\rho \ \mathrm{rad}(B)^s + 
\Norm{e_{B}^{[m+1]}-\tilde e_B^{[m+1]}}_\rho \ \mathrm{rad}(B)^{m+1} 
\,.
\end{align*}
Then, we deal with the error committed by 
approximating $e^{[s]}(x)$ and $e_{B}^{[m+1]}(x)$ with 
$\tilde e^{[s]}(x)$ and $\tilde e_{B}^{[m+1]}(x)$ (\emph{Step 3}). 
After controlling the norm $\norm{e}_{\Theta,\rho}$, we use
that the function is indeed smooth in $B \supset \Theta$ and
control $\Lip{\Theta,\rho}(e)$ (\emph{Step 4}).

\bigskip
\emph{Step 1:} 
The Taylor coefficients
of the first term in~\eqref{eq:error:Lip},
$F(x,\theta)=f_{\al(\theta)}(h(x,\theta))$, satisfy the recurrences
\eqref{eq:recurrences}. In particular, we evaluate them pointwise in
the grid $x_j = j/N$, $0\leq j < N$, thus obtaining
$\{F^{[s]}(x_j)\}$. Notice that the evaluations 
\[
\{h^{[s]}(x_j)\} = \{\mathrm{DFT}_j^{-1}(\{\tilde h_k^{[s]}\})\}\,,
\]
are exact,
since ${h}^{[s]}(x)$ are trigonometric polynomials. 

For the second term in~\eqref{eq:error:Lip}, 
$H(x,\te)=h(x+\theta,\theta)$, we have
\begin{align}
H(x,\theta)= {} & x+\sum_{s\geq 0} H^{[s]}(x) (\theta-\theta_0)^s\\
:= {} & x+\theta+\sum_{s=0}^m 
h^{[s]} (x+\theta_0+\theta-\theta_0) (\theta-\theta_0)^s \nonumber \\
= {} & 
x +\theta+
\sum_{s =0}^m
\left(
\sum_{j\geq 0} \frac{1}{j!} \frac{\dif^j h^{[s]}(x+\theta_0) }{\dif x^j} (\theta-\theta_0)^j
\right) (\theta-\theta_0)^s \nonumber \\
= {} & 
x +\theta_0 + (\theta-\theta_0) +
\sum_{s\geq 0} \left(
\sum_{\substack{j=0\\j\geq s-m}}^s \frac{1}{j!} \frac{\dif^j h^{[s-j]}(x+\theta_0) }{\dif x^j}
\right) (\theta-\theta_0)^s \,. \nonumber
\label{eq:partII} 
\end{align}
Notice that the Fourier coefficients of 
$H^{[s]}(x)$, $\widehat{H}_k^{[s]}$, are just finite linear combinations 
(obtained from derivatives and shifts of angle $\theta_0$)
of the Fourier coefficients of $h^{[l]}(x)$, $0\leq
l \leq s$.

Putting together the two terms, 
we obtain (notice that the affine part in the $[0]^\mathrm{th}$ coefficient
cancels out)
\[
e^{[s]}(x)= F^{[s]}(x)-H^{[s]}(x)\,,\qquad 0 \leq s \leq m.
\]
Therefore, we obtain the Fourier coefficients of
the approximations $\tilde e^{[s]}$ as
\[
\widetilde{e}^{[s]}_k = \mathrm{DFT}_k(\{F^{[s]}(x_j)\})-\widehat{H}_k^{[s]}\,,\qquad 0 \leq s \leq m\,,
\]
which corresponds to~\eqref{eq:DFTe} by linearity.

\bigskip
\emph{Step 2:}
On the one hand, we approximate 
$F^{[m+1]}_B(x)$
by evaluating the 
recurrences \eqref{eq:recurrences:mp1} and \eqref{eq:fattenedhal} 
in the grid $x_j$.
On the other hand, we take
\[
H_B^{[m+1]} (x) := 
 \sum_{j=1}^{m+1} \frac{1}{j!}
\frac{\dif^j h^{[m+1-j]} (x+B)}{\dif x^j}\,,
\]
thus obtaining 
\[
\widetilde{e}^{[m+1]}_{B,k} = \mathrm{DFT}_k(\{F^{[m+1]}_B(x_j)\})-\widehat{H}_{B,k}^{[m+1]}\,.
\]

We thus define 
\begin{equation}\label{eq:finalstep2}
C_T:=
\Norm{\tilde e^{[m+1]}_B}_\rho^{\cF}.
\end{equation}

\bigskip
\emph{Step 3:}
We take into account the error produced when approximating $e^{[s]}(x)$
using discrete Fourier approximation.
We control first the term $e^{[0]}(x)-\tilde e^{[0]}(x)$
using Theorem~\ref{theo:DFT}, obtaining
\[
\norm{e^{[0]}-\tilde e^{[0]}}_\rho =
\norm{F^{[0]}-\tilde F^{[0]}}_\rho 
\leq C_N(\rho,\tilde \rho) \norm{F^{[0]}}_{\tilde \rho}
= C_N(\rho,\tilde \rho) \cF_0\,,
\]
where,
for convenience, we have introduced the notation
\[
\cF_0:=\norm{f_{\al_0}(\id+h^{[0]})-\id}_{\tilde\rho}\,.
\]
Notice that $\cF_0<\infty$ due to the assumption in~\eqref{eq:condError}. 
Since $H^{[s]}(x)$ are trigonometric polynomials, we have
\[
\norm{e^{[s]}-\tilde e^{[s]}}_\rho =
\norm{F^{[s]}-\tilde F^{[s]}}_\rho 
\leq C_N(\rho,\tilde \rho) \norm{F^{[s]}}_{\tilde \rho}
\,.
\]
Recalling that $F^{[s]}(x)$ satisfy the recurrences 
\eqref{eq:recurrences}
we obtain
\begin{align}
\norm{F^{[s]}}_{\tilde\rho}={}&
\Norm{\mathfrak{F}_s(
\id+h^{[0]},\al^{[0]};
h^{[1]},\ldots,h^{[s]},
\al^{[0]},\ldots,\al^{[s]},
F^{[1]},\ldots,F^{[s-1]}
)
}_{\tilde\rho} \nonumber \\
\leq {}&
\sup_{x\in \TT_{\tilde\rho}}
\mathfrak{G}_s \Big(
x+h^{[0]}(x),
\al^{[0]};
\norm{h^{[1]}}_{\tilde\rho}^\cF,
\ldots,\norm{h^{[s]}}_{\tilde \rho}^\cF,
\abs{\al^{[1]}},\ldots,\abs{\al^{[s]}}, \label{eq:KK}\\
& \qquad\qquad~\cF_1,\ldots,\cF_{s-1}\Big)
=:\cF_s\,, \nonumber
\end{align}
where the majorant recurrences $\mathfrak{G}_s$ 
are obtained by applying triangular inequalities, Banach algebra properties, and
$\norm{\cdot}_{\tilde\rho}\leq\norm{\cdot}_{\tilde\rho}^{\cF}$ in the
expression of the recurrence $\mathfrak{F}_s$. Notice that the control
of the supremum can be performed using the ideas in Proposition~\ref{prop:H1check}
and optimal bounds are easily obtained for each particular problem at hand.

Similarly, we control the term
$e^{[m+1]}_B(x)-\tilde e^{[m+1]}_B(x)$ as follows
\[
\norm{e_B^{[m+1]}-\tilde e_B^{[m+1]}}_\rho =
\norm{F_B^{[m+1]}-\tilde F_B^{[m+1]}}_\rho 
\leq C_N(\rho,\tilde \rho) \norm{F_B^{[m+1]}}_{\tilde \rho}
\leq C_N(\rho,\tilde \rho) \cF_{B,m+1}
\]
where $\cF_{B,m+1}$ is obtained 
using analogous recurrences
\begin{align}
\cF_{B,s} := {}& \overline{\cG_{B,s}}\,, \nonumber\\
\cG_{B,s} := {}&
\sup_{x\in \TT_{\tilde\rho}}
\mathfrak{G}_s \Big(
x+h_B^{[0]}(x),
\al_B^{[0]};
\norm{h_B^{[1]}}_{\tilde\rho}^\cF,
\ldots,\norm{h_B^{[s]}}_{\tilde \rho}^\cF,
\abs{\al_B^{[1]}},\ldots,\abs{\al_B^{[s]}},\label{eq:KK2} \\
& \qquad\qquad~\cF_{B,1},\ldots,\cF_{B,s-1}\Big) \subset \RR\,, \nonumber
\end{align}
where we used the notation in~\eqref{eq:Intbound} for the right boundary of an interval.
Notice that these recurrences are initialized as
\[
\cF_{B,0}:=\norm{f_{\al_0}(\id+h_B^{[0]})-\id}_{\tilde\rho}\,,
\]
and that the last term is evaluated taking $h^{[m+1]}(x)=0$ and
$\al^{[m+1]}=0$.

We finally define 
\[
C_F:=
\sum_{s=0}^{m} \cF_s \ \mathrm{rad}(B)^s
+
\cF_{B,m+1} \
\mathrm{rad}(B)^{m+1}
\,,
\]
and complete the estimate for $\norm{e}_{\Theta,\rho}$.

\bigskip
\emph{Step 4:}
Since the objects $h(x,\theta)$ and $\al(\theta)$ are polynomials with
respect to $\theta$, we have that $e_\theta(x)$ is smooth in
the domain $B \supset \Theta$. Hence, we can control $\Lip{\Theta,\rho}(e) \leq \norm{\pd_\theta e}_{B,\rho}$
taking derivatives in our Taylor-model. The estimate in the statement follows
directly with the constant $C_F'$ given by
\[
C_F':=
\sum_{s=1}^{m} s \cF_s \ \mathrm{rad}(B)^{s-1}
+
(m+1) \cF_{B,m+1} \
\mathrm{rad}(B)^{m}
\,. \qedhere
\]
\end{proof}

\begin{remark}
A quite technical observation is that along the proof we propose the use of the
Fourier norm to control several trigonometric polynomials
(see~\eqref{eq:finalstep2}, \eqref{eq:KK} and~\eqref{eq:KK2}), rather than
using the estimate $\mathfrak{M}_{B,\rho}(\cdot)$.  The reason is that these
objects have quite large analytic norms so both approaches produce equivalent
estimates, but the advantage of the Fourier norm is that
it is faster to evaluate.  However, in condition~\eqref{eq:condError} we use
$\mathfrak{M}_{\theta_0,\rho}(\cdot)$ instead since it produces a sharper
estimate.
\end{remark}

\section{Application in an example}\label{sec:example:Arnold}

To complement the exposition and the effectiveness of the estimates,
we illustrate the 
performance of our rigorous estimates
with an example.
For a given value of $\ep \in [0,1)$, we 
consider the Arnold family
\begin{equation}\label{eq:Arnold:map}
\al \in [0,1) \longmapsto f_\al(x) = x + \al + \frac{\ep}{2\pi} \sin (2\pi x)
\end{equation}
and 
apply our a-posteriori theorem to obtain effective bounds for the
measure of parameters $\al$ that correspond to conjugacy to rigid rotation.

\bigskip
\emph{Obtaining candidates for $h(x,\theta)$ and $\al(\theta)$:}
Given a fixed rotation number $\theta_0$, the functions $h^{[s]}(x)$
and the numbers $\al^{[s]}$, $0\leq s \leq m$, are
determined by performing Lindstedt-series at the point $\theta=\theta_0$:
\begin{itemize}[leftmargin=5mm]
\item We compute a trigonometric
polynomial $h^{[0]}(x)$ and a constant $\al^{[0]}$,
that approximate the objects $h_0(x) \simeq x+h^{[0]}(x)$
and $\al_0 \simeq \al^{[0]}$ corresponding to the
conjugacy of the map $f_{\al_0}(x)$, 
at a selected value of $\ep$, to a rigid
rotation of angle $\theta_0$. The pair is obtained 
by numerical continuation 
with respect to $\ep\in [0,\ep_0]$ of the
initial objects $h_0(x)=x$ and $\al_0=\theta_0$ that conjugate
the case $\ep=0$
from the case $\ep=0$ (with initial objects $h_0(x)=x$ and $\al_0=\theta_0$).
The interested reader
is referred to \cite{LlaveL11} for implementation details of this
numerical method and to~\cite{CallejaCLa,
CallejaL09, CallejaF, CanadellH17b, HaroCFLM16, HuguetLS12}
for other contexts where numerical algorithms have been designed from
a-posteriori KAM-like theorems.
\item We then compute approximations for the higher order terms of the Lindstedt
series, i.e. trigonometric polynomials $h^{[s]}(x)$ and numbers $\al^{[s]}$,
for $1\leq s\leq m$.
Specifically, we assume inductively that we have computed the exact
Lindstedt series up to order $\ell$
\begin{equation}\label{eq:myaprox}
h_\ell(x,\theta)=x+\sum_{s=0}^{\ell} h^{[s]}(x)(\theta-\theta_0)^s\,, 
\qquad
\al_\ell(\theta)=\sum_{s=0}^{\ell} \al^{[s]}(\theta-\theta_0)^s\,.
\end{equation}
Then, we compute the partial error of conjugacy
\[
e_\ell(x,\theta) 
= 
\sum_{s\geq\ell+1} e_{\ell}^{[s]}(x)(\theta-\theta_0)^s :=
f_{\al_\ell(\theta)}(h_\ell(x,\theta)) - h_\ell(x+\theta,\theta)
\]
using the rules of Automatic Differentiation for the composition with
the sinus function. Then, the terms of order $\ell+1$ satisfy the equation
\[
\pd_x f_{\al_0}(h_0(x)) h^{[\ell+1]}(x)
- h^{[\ell+1]}(x+\theta_0)
+ \pd_\al f_{\al_0} (h_0(x)) \al^{[\ell+1]}
=-e_{\ell}^{[\ell+1]}(x)
\]
which has the same structure as the linearized
equation~\eqref{eq:linear}, so its solutions are
approximated using trigonometric polynomials by
evaluating formulae~\eqref{eq:solc} in Fourier space. 
\end{itemize}
Unless otherwise stated, all computations discussed from now
onwards
are performed using at most $N=2048$ Fourier coefficients,
requesting that the error of invariance at $\theta_0$
satisfies $\norm{e}_0 < 10^{-35}$, and the Lindstedt
series is computed up to order $m=9$.

Following Section~\ref{sec:hypo}, since the objects~\eqref{eq:myaprox}
are polynomials in $\theta$, we denote by $B$ the interval (centered at $\theta_0$) were they
are evaluated.
Regarding the length of the interval $B$, it is clear that 
the application of the
KAM theory fails if $\mathrm{rad}(B)$ is too large, 
so we may need to split the interval
$B$ into subintervals with non-overlapping interior. We carry out
a \emph{branch and bound} procedure, applying the KAM theorem in a subinterval $B_0\subset B$
and repeating the procedure by splitting the set $B\backslash B_0$ into two
smaller intervals. We stop when the intervals are small enough.

Given the numerical approximation described above, the estimates described
in Section~\ref{sec:hypo} are evaluated. In the terminology of validated
computations, if we can apply Theorem~\ref{theo:KAM}
successfully, obtaining explicit control of the pair $h_0(x)$ and $\al_0$, we say
that the numerical computation has been rigorously validated. Moreover,
if we can apply Theorem~\ref{theo:KAM:L} we say that we have validated
a family of conjugacies $h(x,\theta)$ obtaining a rigorous lower bound of the measure
of parameters $\al$ which leads to conjugation.
It is worth mentioning that we are not limited to use Lindstedt series
to obtain a candidate for $h(x,\theta)$ and $\al(\theta)$. 
One should notice (see Sections~\ref{ssec:conjL} and~\ref{sec:hypo}) that the
arguments do not depend on how the candidates are obtained.

\bigskip
\emph{Global constants for hypothesis $\GI$:}
As it was mentioned in Section~\ref{ssec:GIGII}, this part
is problem dependent.
Given $\hat\rho>0$, the derivatives of the Arnold map~\eqref{eq:Arnold:map}
are controlled as
\begin{align*}
& c_x = 1+ \ep \cosh(2 \pi \hat\rho)\,, \\
& c_\al = 1\,, \\
& c_{xx} = 2\pi \ep \cosh(2 \pi \hat\rho)\,, \\
& c_{xxx} = 4 \pi^2 \ep \cosh(2 \pi \hat\rho)\,, \\
& c_{x\al} = c_{\al\al}= c_{x\al\al} = c_{\al\al\al}=0\,.
\end{align*}

\bigskip
\emph{Global constants for hypothesis $\GII$:}
Given an interval of rotation numbers $B\subset (0,1)$ centered at $\theta_0$,
the parameters $\gamma$ and $\tau$
are selected to guarantee that the set of Diophantine numbers in the
set $\Theta=B \cap
\cD(\gamma,\tau)$ reaches a prefixed relative measure.
To this end, we use Lemma~\ref{sec:lemma:Dio} asking for a relative measure of $99\%$.
This fulfills the hypothesis
$\GII$. For example, for an interval with $\mathrm{rad}(B)=1/2^{14}$ centered on
$\theta_0=(\sqrt{5}-1)/2$, such relative measure is achieved taking
$\gamma = 0.0009765625$ and $\tau=1.2$.
Note that, when dividing the interval $B$ into
subintervals, suitable constants $\gamma$ and $\tau$ are recomputed.
Thus, we take into account the resonances
that affect only each subinterval.

\bigskip
\emph{Hypotheses $\HI$, $\HII$ and $\HIII$:}
Using the numerical candidates, we proceed to invoke 
Proposition~\ref{prop:H1check}.
The finite amount of computations are rigorously performed
using computer intervalar arithmetics. For example,
for $\ep=0.25$, $\theta_0=(\sqrt{5}-1)/2$ and $\mathrm{rad}(B)=1/2^{14}$,
we obtain
\begin{align*}
\mathfrak{M}_{B,\rho}(\pd_x h) < {} &             1.16741651 \,, \\
1/\mathfrak{m}_{B,\rho}(\pd_x h) < {} &           1.16623515 \,, \\
\mathfrak{M}_{B,\rho}(\pd_{xx} h) < {} &          1.14231325 \,, \\
\overline{\abs{1/\tilde b_{B,0}}}/(1-c_b) < {} &  0.990541647 \,, \\
\mathfrak{M}_{B,\rho}(\pd_\theta h)< {} &         0.114433851 \,, \\
\mathfrak{M}_{B,\rho}(\pd_{x\theta} h) < {} &     0.842060023 \,, \\
\overline{\al'(B)}  < {} &                        0.990774718 \,, \\
\underline{\al'(B)} > {} &                        0.990772454 \,.
\end{align*}

\bigskip
\emph{Rigorous control of the error of invariance:}
Let us first describe the recurrences $\mathfrak{F}_{s}$ and $\mathfrak{G}_{s}$,
associated to the family~\eqref{eq:Arnold:map}, which are used
to compute the estimate produced in Theorem~\ref{theo:Finite}.
Given series
\[
h(x,\theta)=x+\sum_{s=0}^m h^{[s]}(x) (\theta-\theta_0)^s
\,, \qquad
\al(\theta)=\sum_{s=0}^m \al^{[s]} (\theta-\theta_0)^s
\]
we have that the coefficients of
\[
F(x,\theta)=f(h(x,\theta),\al(\theta))= x+\sum_{s\geq 0} F^{[s]}(x) (\theta-\theta_0)^s
\,,
\]
are obtained using the formula $\mathfrak{F}_s$, which in this case corresponds
to evaluate the recurrences
\[
F^{[s]}(x) =
        h^{[s]}(x) + \al^{[s]} + \frac{\ep}{2\pi} \Sin^{[s]}(x)\,,
\]
where
\begin{align*}
&\Sin^{[0]}(x) = \sin(2\pi (x+h^{[0]}(x)))
\,,\\
&\Cos^{[0]}(x) = \cos(2\pi (x+h^{[0]}(x)))\,,
\end{align*}
and, for $s\geq 1$,
\begin{align*}
& \Sin^{[s]}(x) = \frac{2\pi}{s} \sum_{j=0}^{s-1} (s-j)h^{[s-j]}(x) \Cos^{[j]}(x)
\,, \\
& \Cos^{[s]}(x) = -\frac{2\pi}{s} \sum_{j=0}^{s-1} (s-j)h^{[s-j]}(x) \Sin^{[j]}(x)\,.
\end{align*}

When the time comes to control the error produced with Fourier discretization,
these formulae lead to (in this case the functions $\sin(\cdot)$ and $\cos(\cdot)$
have the same bounds)
\[
\norm{F^{[s]}}_{\tilde\rho} \leq 
\norm{h^{[s]}}_{\tilde\rho}+ \abs{\al^{[s]}}+\frac{\ep}{2\pi} \cS_s 
= \cF_{s}\,,
\]
for $s\geq 0$, where the constants $\cS_s$ are initialized as
\[
\cS_0 = \cosh(2\pi(\tilde\rho+\norm{h^{[0]}}_{\tilde\rho}^{\cF}))
\]
and then obtained recursively, for $s\geq 1$, as
\[
\cS_s = \frac{2 \pi}{s} \sum_{j=0}^{s-1}
(s-j) \norm{h^{[s-j]}}_{\tilde\rho}^\cF \ \cS_{j}\,.
\]
Analogous formulae are used to compute $\cF_{B,m+1}$.

\bigskip
\emph{Selection of the remaining KAM parameters:}
Finally, there is a set of parameters that must be selected in order
the apply the a-posteriori theorems: $\rho$, $\delta$, $\rho_\infty$,
$\hat \rho$, $\tilde \rho$ and parameters to control
the initial objects.
To choose the parameters 
$\sigma_1$,
$\sigma_2$, $\sigma_3$, $\sigma_b$, $\beta_0$, $\beta_1$ and $\beta_2$, we
use a single parameter $\sigma>1$ together with the sharp estimates produced
in Proposition~\ref{prop:H1check}.
For example, we take $\sigma_1=\mathfrak{M}_{B,\rho}(\pd_x h) \sigma$.
Suitable KAM parameters are selected following a
heuristic procedure, adapted mutatis mutandis from~\cite[Appendix A]{FiguerasHL17},
that allows us to optimize the values of the
constants $\mathfrak{C}_1$, $\mathfrak{C}_2$, $\mathfrak{C}_1^\sLip$,
and $\mathfrak{C}_2^\sLip$. 

\bigskip
\emph{Some specific results:}
In the above scenario, for $\ep=0.25$ and $B = (0,1)$, we
choose the set of Diophantine numbers $\Theta \subset B$ such that
$\meas(\Theta)>0.99$. After applying Theorem~\ref{theo:KAM:L}
we obtain a lower bound
$\meas(\al_\infty(\Theta)) > 0.860748$ for the absolute measure of
parameters which correspond to rotation.
To see how sharp is this estimate, we compute also
a lower bound of the measure of the phase-locking intervals, thus obtaining
\[
0.085839 < \meas([0,1]\backslash \al_\infty(\Theta))\,.
\]
This lower bound follows by computing (using standard rigorous Newton method)
two $p/q$-periodic orbits
close to the boundaries of each interval of rotation $p/q \in \QQ$, for $q\leq 20$.
To summarize, in terms of the notation in the introduction, we have
\[
0.860748 < \meas(K_{0.25}) < 0.914161\,.
\]
Notice that, assuming that the measure was $\meas(K_{0.25}) \simeq 0.914161$, our
rigorous lower bounds predicts the 94.15\% of the measure.

Of course, most part of underestimation
corresponds to the resonances $0/1$, $1/2$, $1/3$, $2/3$, $1/4$ and
$3/4$. If we remove the set of subintervals (of total measure $0.082046$) 
were we fail to apply the theorem
(mostly around the mentioned resonances), then it turns out that the produced
lower bound corresponds to a relative measure of $93.76$\% in the
considered set of rotations $B$ (which now has measure $0.917954$).
Indeed, for the interval $B = (391/1024,392/1024)$
(which contains $(3-\sqrt{5})/2$) we obtain a 
relative lower bound of $98.26$\% for the existence of conjugacies. These lower
bounds can be improved by increasing the number of Fourier coefficients and
the tolerances, but these numbers serve as an illustration with moderate
computational effort. For example, using a single desktop computer,
the validation of the interval $B =
(391/1024,392/1024)$ takes around 
$10$ minutes
(49 subdivisions of $B$ are required)
and the validation of the
interval $B=(0,1)$ takes 
around $30$ days
(161891 subdivisions of $B$ are required). As
expected, the bottleneck are resonant rotation numbers which do not contribute
in practice.

In Table~\ref{tab:table:results} we illustrate how the lower bound depends on
the selected value of $\ep_0$. In order to illustrate the computational
cost of the validation, we show the largest interval $B$ (of length of
the form $1/2^{n}$) centered at the golden rotation number such the theorem
can be applied without subdividing the interval. This, together with the number
of Fourier coefficients required, illustrates the technological difficulty
to apply the KAM theory when one approaches the critical limit. 

\begin{table}[!h]
\centering
\begin{tabular}{|l| r c |}
\hline
$\qquad\qquad\ep_0$ & $N$ & $\meas(\al_\infty(\Theta))/\meas(B)>$ \\
\hline
$\hphantom{00}1/2^7 = 0.0078125$ & $64$ & $0.999533$ ($\meas(B)=1/2^{12}$)\\
$\hphantom{0}10/2^7 = 0.078125$ & $128$ & $0.998661$ ($\meas(B)=1/2^{12}$)\\
$\hphantom{0}20/2^7 = 0.15625$ & $256$ &  $0.995996$ ($\meas(B)=1/2^{14}$)\\
$\hphantom{0}30/2^7 = 0.234375$ & $256$ & $0.991461$ ($\meas(B)=1/2^{14}$)\\
$\hphantom{0}40/2^7 = 0.3125$ & $256$ &   $0.984921$ ($\meas(B)=1/2^{15}$)\\
$\hphantom{0}50/2^7 = 0.390625$ & $512$ & $0.976080$ ($\meas(B)=1/2^{17}$)\\
$\hphantom{0}60/2^7 = 0.46875$ & $512$ &  $0.964547$ ($\meas(B)=1/2^{17}$)\\
$\hphantom{0}70/2^7 = 0.546875$ & $512$ & $0.949686$ ($\meas(B)=1/2^{18}$)\\
$\hphantom{0}80/2^7 = 0.625$ & $1024$ &   $0.930482$ ($\meas(B)=1/2^{19}$)\\
$\hphantom{0}90/2^7 = 0.703125$ & $1024$ &$0.905233$ ($\meas(B)=1/2^{19}$)\\
$100/2^7 = 0.78125$ & $1024$ &            $0.870752$ ($\meas(B)=1/2^{20}$)\\
$110/2^7 = 0.859375$ & $2048$ &           $0.819862$ ($\meas(B)=1/2^{22}$)\\
$120/2^7 = 0.9375$ & $4096$ &             $0.728697$ ($\meas(B)=1/2^{25}$)\\
$123/2^7 = 0.9609375$ & $8192$ &          $0.678925$ ($\meas(B)=1/2^{26}$)\\
$125/2^7 = 0.9765625$ & $16384$ &         $0.627992$ ($\meas(B)=1/2^{28}$)\\
\hline
\end{tabular}
\caption{ {\footnotesize
Rigorous lower bounds for the measure of the conjugacy of rotations
for the Arnold map (for different values of $\ep_0$) corresponding to
a small interval $B$ of rotation numbers centered at
$\theta_0 = (\sqrt{5}-1)/2$. For each $\ep_0$ we choose the largest
interval that allows us to apply Theorem~\ref{theo:KAM:L} without
subdividing it.
We include also the number of
Fourier coefficients used.}} \label{tab:table:results}
\end{table}

For the sake of completeness, we finally include a list with some parameters
and intermediate estimates associated to the computer-assisted proof
corresponding to $\ep_0=0.25$ and an
interval with $\mathrm{rad}(B)=1/2^{14}$ centered on
$\theta_0=(\sqrt{5}-1)/2$.
\begin{align*}
\rho       ={} & 1.060779991992726 \cdot 10^{-2}\,, \\
\rho_{\infty}={} & 1.060779991992726 \cdot 10^{-5}\,, \\
\delta     ={} & 2.651949979981816 \cdot 10^{-3}\,, \\
\hat\rho   ={} & 3.352069177291399 \cdot 10^{-2}\,, \\
\tilde\rho ={} & 1.272935990391272 \cdot 10^{-1}\,, \\
\sigma     ={} & 1.000107420067662 \,, \\
C_T        ={} & 5.898764259376722 \cdot 10^{17}\,, \\
C_F        ={} & 7.510700397297086 \cdot 10^{-1}\,, \\
\norm{e}_{\Theta,\rho} 
< {}&                      5.555151225281469 \cdot 10^{-24} \,, \\
\Lip{\Theta,\rho}(e)
< {}&                      9.101559767500465 \cdot 10^{-19} \,, \\
\kappa <{} &               3.000359067414863 \cdot 10^{-11} \,, \\
\mu <{} &                  5.097018817302322 \cdot 10^{-11} \,, \\
\mathfrak{C}_1^\sLip <{} & 5.364258457375897 \cdot 10^{5}\,, \\
\mathfrak{C}_2^\sLip <{} & 2.440549276176043 \cdot 10^{1}\,, \\
\tfrac{\mathfrak{C}_1^\sLip}{\gamma^{2} \rho^{2\tau+2}}\cE
< {}&                      6.778257281684347 \cdot 10^{-1} \,, \\
\tfrac{\mathfrak{C}_2^\sLip}{\gamma \rho^{\tau+1}}\cE
< {}&                      6.466059566862642 \cdot 10^{-14}\,,
\end{align*}
where we recall that
\[
\cE:=\max \left\{ \norm{e}_{\Theta,\rho} \,,\,\gamma \delta^{\tau+1}\Lip{\Theta,\rho}(e)\right\}\,.
\]
Then, we apply Theorem~\ref{theo:KAM:L} and obtain a lower bound
$\meas(\al_\infty(\Theta))>0.000120024$ of the absolute measure, which corresponds to a relative measure of $98.32\%$
in the selected interval.

\section{Final remarks}\label{sec:remarks}

To finish we include some comments regarding
direct applications and generalizations of the results presented in the paper.
Our aim is to present a global picture of our approach and to
establish connections with different contexts.

\bigskip
\emph{Asymptotic estimates in the local reduction case}.
Although Theorem~\ref{theo:KAM:L} has been developed with the aim
of performing computer-assisted applications in non-perturbative regimes,
it is clear it allows recovering the perturbative setting.
Indeed, as a direct corollary, we obtain asymptotic measures (\`a la Arnold)
of conjugacies close to rigid rotation in large regions of parameters. 

\begin{corollary}\label{coro:perturbado}
Consider a family of the form
\[
f_\al(x)=x+\al+\ep g(x)\,,
\]
with $g \in \Per{\hat\rho}$, $\hat\rho>0$. 
Then there is constants $\mathfrak{C}_1^\sLip$, $\mathfrak{C}_2^\sLip$
(which are directly computed using Theorem~\ref{theo:KAM:L}) such that if
$\ep$ satisfies
\[
\ep \frac{\mathfrak{C}_1^\sLip \norm{g}_\rho}{\gamma^2 \delta^{2\tau+2}} < 1\,,
\]
with $\rho<\hat\rho$, $\gamma<1/2$ and $\tau>0$,
then we have
\begin{align*}
\meas (\al_\infty(\Theta)) \geq {} & 
\left(
1
- 
\ep \frac{\mathfrak{C}_2^\sLip \norm{g}_\rho}{\gamma \rho^{\tau+1}}
\right)
\left(1-2\gamma \frac{\zeta(\tau)}{\zeta(\tau+1)}\right) \\
> {} & 1-
\gamma
\left(
\frac{\mathfrak{C}_2^\sLip \rho^{\tau+1}}{\mathfrak{C}_1^\sLip}
- 2 \frac{\zeta(\tau)}{\zeta(\tau+1)}
\right) + \cO(\gamma^2)\,, 
\end{align*}
where $\Theta=[0,1]\cap \cD(\gamma,\tau)$ and $\zeta$ is the Riemann zeta function.
\end{corollary}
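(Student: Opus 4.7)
The plan is to apply Theorem~\ref{theo:KAM:L} directly to the trivial initial approximation given by the identity conjugacy $h_\theta(x)=x$ and the identity parametrization $\al(\theta)=\theta$. For this choice, a one-line computation shows that the associated error is
\[
e_\theta(x)=f_{\al(\theta)}(h_\theta(x))-h_\theta(x+\theta)=\ep g(x),
\]
independent of $\theta$. Consequently $\norm{e}_{\Theta,\rho}=\ep\norm{g}_\rho$ and, crucially, $\Lip{\Theta,\rho}(e)=0$, so the weighted error appearing inside hypothesis~\eqref{eq:KAM:C1:L} collapses to $\ep\norm{g}_\rho$.

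Next I would verify the structural hypotheses with this ansatz. The derivatives of $f_\al(x)=x+\al+\ep g(x)$ are controlled by standard Cauchy estimates on $g$, and since $\pd_\al f_\al\equiv 1$ while $\pd_\al^k f_\al\equiv 0$ for $k\geq 2$, many of the constants $c^{i,j}_{x,\al}$ in $\GI$ either vanish or carry a factor of $\ep$. Regarding the approximation itself: $\pd_x h\equiv 1$ and $\pd_{xx} h\equiv 0$ allow us to take any $\sigh>1$, $\sighi>1$, $\sighxx>0$ in $\HI$; the auxiliary function is $b_\theta(x)\equiv 1$, so $\aver{b}=1$ and any $\sighb>1$ fits $\HII$; and since $h-\id$ and $\pd_x h$ are constant in $\theta$, while $\al(\theta)=\theta$ is affine with $\lip{\Theta}(\al)=\Lip{\Theta}(\al)=1$, we may take $\betah,\betaDh>0$ arbitrarily small and any $\betaa>1$ in $\HIII$. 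Finally, choosing $\Theta=[0,1]\cap\cD(\gamma,\tau)$ takes care of $\GII$, and Remark~\ref{rem:meas:gbl} provides the lower bound $\meas(\Theta)\geq 1-2\gamma\,\zeta(\tau)/\zeta(\tau+1)$.

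With these selections, $\mathfrak{C}_1^\sLip$ and $\mathfrak{C}_2^\sLip$ become concrete constants depending only on the parameters $\rho,\delta,\rho_\infty,\hat\rho$ and on $\norm{g}_\rho$, and hypothesis~\eqref{eq:KAM:C1:L} reduces to the linear-in-$\ep$ condition stated in the corollary. Under it, conclusion $\TIV$ of Theorem~\ref{theo:KAM:L}, together with $\lip{\Theta}(\al)=1$ and $\Lip{\Theta,\rho}(e)=0$, yields
\[
\meas(\al_\infty(\Theta))>\left[1-\ep\,\frac{\mathfrak{C}_2^\sLip\norm{g}_\rho}{\gamma\rho^{\tau+1}}\right]\meas(\Theta),
\]
which combined with the lower bound on $\meas(\Theta)$ gives the first inequality of the corollary. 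The asymptotic estimate then follows by substituting $\ep$ at its maximal admissible value $\ep=\gamma^2\rho^{2\tau+2}/(\mathfrak{C}_1^\sLip\norm{g}_\rho)$, expanding the product and retaining the linear terms in $\gamma$. There is no genuine obstacle here: the trivial ansatz makes hypothesis-verification essentially mechanical, and the special structure $\pd_\al f_\al\equiv 1$ together with the vanishing Lipschitz norms collapses the elaborate bookkeeping of Section~\ref{ssec:conjL} to a single smallness condition linear in the perturbation parameter.
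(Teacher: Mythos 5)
Your proposal is correct and follows essentially the same route as the paper: the trivial ansatz $h_\theta(x)=x$, $\al(\theta)=\theta$ gives $e_\theta=\ep g$ with $\Lip{\Theta,\rho}(e)=0$, the hypotheses $\HI$--$\HIII$ are met with uniform constants (the paper fixes them via a single parameter $\sigma>1$, exactly as you indicate), and conclusion $\TIV$ combined with Remark~\ref{rem:meas:gbl} yields the stated measure bound, with the asymptotic line obtained by saturating the smallness condition in $\ep$.
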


\begin{proof}
We consider the candidates
\[
h(x,\theta)=x\,, \qquad \al(\theta)=\theta\,,
\]
and apply Theorem~\ref{theo:KAM:L}. First, we notice
that the error of conjugacy is of the form
$e(x)=\ep g(x)$, independent of $\theta$, so
we have the estimates
\[
\norm{e}_{\Theta,\rho} \leq \ep \norm{g}_\rho\,,
\qquad
\Lip{\Theta,\rho}(e)=0\,.
\]
To satisfy the hypothesis $\HI$, $\HII$, and $\HIII$ we introduce
an uniform parameter $\sigma>1$ and we take
\begin{align*}
& \sigma_1=\sigma_2=\sigma_b=\beta_2=\sigma>1\,,\\
& \sigma_3=\beta_0=\beta_1=\sigma-1>0\,.
\end{align*}
Then, we compute the constant $\mathfrak{C}_1^\sLip$ and
assume that $\ep$ is small enough so that the
condition~\eqref{eq:KAM:C1:L} holds. 
Indeed, the largest value of $\ep$ that saturates this condition
can be obtained by selecting a suitable value of $\sigma$.
Then, the measure of parameters is given by the formula
\[
\meas (\al_\infty(\Theta)) \geq
\left[
1
- 
\frac{\mathfrak{C}_2^\sLip
\ep \norm{g}_{\rho}}{\gamma \rho^{\tau+1}}
\right]
\meas(\Theta)\,.
\]
The statement follows recalling
the estimate for $\meas(\Theta)$ in Remark~\ref{rem:meas:gbl}.
\end{proof}

\bigskip
\emph{Conjugation of maps on $\TT^d$}.
Theorem~\ref{theo:KAM} and Theorem~\ref{theo:KAM:L} can be readily
extended to consider a family $\al \in A \subset \RR^d \mapsto f_\al \in \Anal(\TT^d_\rho)$.
In this case, a map $g : \TT^d \rightarrow \TT^d$ is viewed as a vector, and
the norm $\norm{g}_{\rho}$, for $g\in \Per{\rho}$, is taken as the induced norm. Then, all the
arguments and computations are extended
to matrix object with no special difficulty. For example, the torsion
matrix becomes
\[
b(x)= \Dif_x h(x+\theta)^{-1} \Dif_\al f(h(x))
\]
where $\theta \in \RR^d$ is the selected Diophantine vector.
The interested reader is referred to~\cite{CanadellH17a} for details regarding the
corresponding Theorem~\ref{theo:KAM} and to~\cite{FiguerasHL17} for details
regarding the approximation of functions in $\TT^d$ using discrete Fourier transform.

\bigskip
\emph{Other KAM contexts}.
We have paid special attention to present Theorem~\ref{theo:KAM} and Theorem~\ref{theo:KAM:L}
separately. The important message is that our methodology can be readily
extended to any problem, as long as there exists an a-posteriori KAM theorem
(which replaces Theorem~\ref{theo:KAM}) for the existence of quasi-periodic dynamics:
Lagrangian tori
in Hamiltonian systems or symplectic maps \cite{GonzalezJLV05,HaroCFLM16,HaroL},
dissipative systems \cite{CallejaCLa},
skew-product systems \cite{FiguerasH12,HaroCFLM16,HaroL06a}
or lower dimensional tori \cite{FontichLS09,LuqueV11}, just to mention a few.
For each of such theorems, a judicious revision of the corresponding KAM scheme must be performed
in order to obtain sharp estimates of the Lipschitz dependence on parameters
(which replaces Theorem~\ref{theo:KAM:L}). It is worth mentioning that we have
devoted a significant effort to explain, in common analytic terms, the technical
issues that permits the computer-assisted validation of the hypothesis of our theorems.
This is an important step that prevents the computer-assisted proof to be a ``black
box'' full of tricks that can be only appreciated by a few experts.

\bigskip
\emph{Rigorous validation of rotation numbers}.
Last but not least, another significant corollary of the methodology developed in this
paper is that it allows to rigorously enclose the
rotation number of a circle map. Due to the relevance of this topological
invariant, during the last years, many numerical methods
have been developed for this purpose.
We refer for example to the works~\cite{Bruin92,LaskarFC92,LuqueV08,LuqueV09,Pavani95,SearaV06}
and to~\cite{DasSSY17} for a remarkable method with infinite order convergence,
based on Birkhoff averages. As illustrated in Section~\ref{sec:example:Arnold},
the KAM theorems presented in this paper allows us to obtain a rigorous
enclosure (as tight as required) for the rotation number of a map, which is
interesting in order to rigorously validate the numerical approximations performed
with any of the mentioned numerical methods.

\section*{Acknowledgements}

J.-Ll. F. acknowledges the partial support from Essen, L. and C.-G., for
mathematical studies.  A. H. is supported by the grants MTM2015-67724-P
(MINECO, FEDER), MDM-2014-0445 (MINECO) and 2014 SGR 1145 (AGAUR), and the
European Union's Horizon 2020 research and innovation programme MSCA 734557.
A.L.\ is supported by the Knut och Alice Wallenbergs stiftelse KAW 2015.0365.
We acknowledge computational resources at Uppsala Multidisciplinary Center for
Advanced Computational Science (UPPMAX) (projects SNIC 2018/8-37 and SNIC
2018/8-38).  We acknowledge the use of the Lovelace (ICMAT-CSIC) cluster for
research computing, co-financed by the Spanish Ministry of Economy and
Competitiveness (MINECO), European FEDER funds and the Severo Ochoa Programme.

\bibliographystyle{plain}
\bibliography{references}

\end{document}